\documentclass[11pt,reqno]{amsart}
\usepackage[utf8x]{inputenc}
\usepackage{ucs}
\usepackage{amsmath,slashed}
\usepackage{amsfonts}
\usepackage{mathtools}
\usepackage{amssymb}
\usepackage[shortlabels]{enumitem}
\usepackage{bbm}
\usepackage{graphicx}
\usepackage[left=2cm,right=2cm,top=2cm,bottom=2cm]{geometry}

\def\NN{\mathbb{N}}
\def\RR{\mathbb{R}}

\def\ZZ{\mathbb{Z}}

\def\TT{\mathbb{T}}

\DeclareMathOperator\Erf{Erf}

\newcommand\cM{\mathcal{M}}

\newcommand\tH{\widetilde{H}}

\newcommand\tka{\tilde\kappa}

\newcommand\chis{\chi_{\mathrm{sm}}}
\newcommand\chim{\chi_{\mathrm{med}}}
\newcommand\chil{\chi_{\mathrm{lar}}}

\newcommand\crit{N(\nabla u,R)}

\newlength{\intwidth}

  \newcommand\uno{\mathrm{I}}
  \newcommand\dos{\mathrm{II}}
  \newcommand\tres{\mathrm{III}}

\addtolength{\parskip}{3pt}

\usepackage{amssymb}
\usepackage[centertags]{amsmath}
\usepackage{verbatim}

\newcommand{\pd}{\partial}
\newcommand\om\omega
\newcommand{\al}{\alpha}
\newcommand\ze\zeta
\newcommand{\be}{\beta}
\newcommand{\si}{\sigma}
\newcommand{\Si}{\Sigma}
\newcommand{\De}{\Delta}
\def\RR{\mathbb{R}}

\DeclareMathOperator\Real{Re}
\DeclareMathOperator\Imag{Im}

\def\cR{\mathcal R}

\def\TT{\mathbb{T}}
\def\S{\mathbb{S}}

\newcommand\vp\varphi
\newcommand\vr\varrho
\newcommand\tvr{\tilde{\varrho}}
\newcommand\ka\kappa
\newcommand\te\theta
\newcommand\vka\varkappa

\newcommand\tI{\widetilde{I}}

\newcommand\de\delta
\newcommand\La\Lambda
\newcommand\la\lambda
\newcommand\ga\gamma

\newcommand\Ga\Gamma

\newcommand\cI{\mathcal I}
\newcommand\cJ{\mathcal J}
\newcommand\R{_{\mathrm{R}}}
\newcommand\I{_{\mathrm{I}}}

\newcommand{\triple}[1]{{\left\vert\kern-0.25ex\left\vert\kern-0.25ex\left\vert #1
        \right\vert\kern-0.25ex\right\vert\kern-0.25ex\right\vert}}

\usepackage[toc, page]{appendix}



\newcommand{\bR}{\mathbb{R}}

\newcommand{\bE}{\mathbb{E}}
\newcommand{\bP}{\mathbb{P}}
\newcommand{\bZ}{\mathbb{Z}}

\DeclareMathOperator\var{Var}
\DeclareMathOperator\cov{Cov}

\makeatletter
\renewcommand*{\@fnsymbol}[1]{\ensuremath{\ifcase#1\or *\or \star\or ***\or
   \mathsection\or \mathparagraph\or \|\or **\or \dagger\dagger
   \or \ddagger\ddagger \else\@ctrerr\fi}}
\makeatother

\usepackage{fancyhdr}
\setlength{\headheight}{15.2pt}

\pagestyle{headings}

\newcommand\ep\varepsilon

\newtheorem{theorem}{Theorem}[section]
\newtheorem{lemma}[theorem]{Lemma}

\newtheorem{proposition}[theorem]{Proposition}
\newtheorem{corollary}[theorem]{Corollary}

\theoremstyle{definition}
\newtheorem{remark}[theorem]{Remark}

\numberwithin{equation}{section}

\oddsidemargin=0.1in
\evensidemargin=0.25in
\topmargin=-0.5in
\textwidth 15.5cm
\textheight 22.5cm

\setcounter{tocdepth}{2}

\renewcommand\le\leqslant
\renewcommand\leq\leqslant
\renewcommand\geq\geqslant
\renewcommand\ge\geqslant

\usepackage[pdftex]{hyperref}
\hypersetup{
	colorlinks=true,
	linkcolor=blue,
	citecolor=blue,
	filecolor=blue,
	urlcolor=blue,
}

\title[Critical point asymptotics]{Critical point asymptotics for Gaussian random waves with
  densities of any Sobolev regularity}

\author{Alberto Enciso}
\address{Instituto de Ciencias Matem\'aticas, Consejo Superior de
  Investigaciones Cient\'\i ficas, 28049 Madrid, Spain}
\email{aenciso@icmat.es}

\author{Daniel Peralta-Salas}
\address{Instituto de Ciencias Matem\'aticas, Consejo Superior de
 Investigaciones Cient\'\i ficas, 28049 Madrid, Spain}
\email{dperalta@icmat.es}

\author{\'Alvaro Romaniega}
\address{Instituto de Ciencias Matem\'aticas, Consejo Superior de
 Investigaciones Cient\'\i ficas, 28049 Madrid, Spain}
\email{alvaro.romaniega@icmat.es}

\begin{document}
\maketitle

\begin{abstract}
  We consider Gaussian random monochromatic waves~$u$ on the plane
  depending on a real parameter~$s$ that is directly related to the
  regularity of its Fourier transform. Specifically, the Fourier
  transform of~$u$ is $f\,d\si$, where $d\si$ is the Hausdorff measure
  on the unit circle and the density $f$ is a function on the circle
  that, roughly speaking, has exactly $s-\frac12$ derivatives in~$L^2$
  almost surely. When $s=0$, one recovers the classical setting for
  random waves with a translation-invariant covariance-kernel.  The
  main thrust of this paper is to explore the connection between the
  regularity parameter~$s$ and the asymptotic behavior of the number
  $\crit$ of critical points that are contained in the disk of
  radius~$R\gg1$. More precisely, we show that the expectation~$\bE\crit$ grows like the area of the disk when the regularity is
  low enough ($s<\frac32$) and like the diameter when the regularity
  is high enough ($s>\frac52$), and that the corresponding exponent
  changes according to a linear interpolation law in the intermediate
  regime. The transitions occurring at the endpoint cases involve the
  square root of the logarithm of the radius. Interestingly, the
  highest asymptotic growth rate occurs only in the classical
  translation-invariant setting, $s=0$. A key step of the proof of
  this result is the obtention of precise asymptotic expansions for
  certain Neumann series of Bessel functions. When the
  regularity parameter is $s>5$, we show that in fact $\crit$ grows
  like the diameter with probability~1, albeit the ratio is not a
  universal constant but a random variable.
\end{abstract}

\section{Introduction}

Nazarov and Sodin have developed some powerful techniques to derive
asymptotic laws for the distribution of the zero set of smooth
Gaussian functions of several
variables~\cite{NS09,NS16}. Specifically, their theory applies to two
different but related settings: the restriction to large balls of
Gaussian functions on Euclidean space with translation-invariant
covariance kernels and to Gaussian ensembles of high degree
polynomials on the sphere or the torus with asymptotically
translation-invariant kernels. In the first
setting, a prime example arising in spectral theory is the study of
Gaussian random monochromatic waves; in the second, that of random
spherical harmonics of high frequency.

In this paper we are concerned with asymptotic laws for the
number of critical points (i.e., the zeros of the gradient). We
consider this question in the context of Gaussian random
monochromatic waves on the plane, which are solutions to the Helmholtz
equation on~$\RR^2$,
\begin{equation}\label{Helmholtz}
\De u + u = 0\,.
\end{equation}
As is well known, the study of
critical points is a central topic in spectral theory~\cite{Ya82,Ya93,JN,Bu20}
(and, in general, in the geometric study of solutions to
differential equations~\cite{Wa50,Al87,Mag1,EP18}), both in the deterministic and
random settings. This is partly because
they are very closely related to the geometry of the nodal components.

When $u$ is polynomially bounded, the Helmholtz equation
simply means that~$u$ is the Fourier transform of a distribution
supported on the unit circle, which we identify with
$\TT:=\RR/2\pi\ZZ$ via the map
\begin{equation}\label{defE}
E(\phi):=(\cos\phi,\sin\phi)\,.
\end{equation}
Solutions to the Helmholtz
equation are necessarily analytic, but their Fourier transforms do not have
any a priori regularity properties. There are some connections,
though, between the regularity of the Fourier transform
and the decay rate of~$u$ at infinity.
Most important is the classical result of Herglotz ensuring
that $u$ has the sharp fall-off at
infinity (which is as $|x|^{-\frac12}$ in a space-averaged sense) if and only
if one can write
\begin{equation}\label{uf}
u(x)=\int_{\TT}e^{-i x\cdot E(\phi)}\, f(\phi)\, d\phi
\end{equation}
with some square-integrable density~$f$, and that in this case the norm
$\|f\|_{L^2(\TT)}$ quantitatively captures the decay rate
of~$u$. For details and generalizations, see e.g.~\cite[Appendix A]{EPR20}.

The main thrust of this paper is to understand the connection between
the distribution of the critical points of~$u$, defined as
in~\eqref{uf}, and the regularity of the density~$f$. To this end, we
consider the usual ansatz for random plane waves~\cite{CS19,SW19} and tweak it by
introducing a real parameter $s\in\RR$ to control the
regularity of~$f$:
\begin{equation}\label{defu}
u(x) := \sum_{l\neq 0} a_l \,|l|^{-s} e^{i l\te}\, J_l(r)\,.
\end{equation}
Here the real and imaginary part of $a_l$
are independent standard Gaussian random variables subject to the
constraint $a_l=(-1)^l\overline{a_{-l}}$ (which makes~$u$ real valued),
$(r,\te)\in \RR^+\times\TT$ are the polar coordinates. This is equivalent to
taking the Gaussian random density
\begin{equation}\label{deff}
f(\phi):= \frac1{2\pi}\sum_{l\neq 0} i^l a_l |l|^{-s} e^{i l\phi}
\end{equation}
and then defining~$u$ through the formula~\eqref{uf}, which must be understood in
the sense of distributions.

Of course, the rationale behind this definition is that
$\{|l|^{-s} e^{il\phi}\}_{l\neq 0}$ is an
orthonormal basis of the Sobolev space $\dot H^s(\TT)$ of functions
with zero mean and~$s$ derivatives in~$L^2$, which reduces to the
space of square-integrable functions of zero mean when $s=0$. The covariance kernel
of~$u$ is translation-invariant when $s=0$, so the Nazarov--Sodin
theory is applicable in this case (see Remark \ref{KernelNS} for details), but this is not the case for nonzero~$s$. One should note that the proofs work verbatim if one replaces the weight $|l|^{-s}$ by a more general expression such as
\begin{equation}\label{defsil}
\si_{l}=\si_{-l}=|l|^{-s} +p_{-s-1}(l)\,,
\end{equation}
where the function $p_{-s-1}(t)$ is an arbitrary classical symbol of order $-s-1$
(which does not necessarily vanish at~0). The resulting constants, however, depend on the specific sequence $\si_l$.

It is not hard to see that the parameter~$s$
describes the regularity of the density in the sense that~$f$ has exactly
$s-\frac12$ derivatives in~$L^2$ almost surely, as measured using
Sobolev or Besov spaces. Specifically, one can show that, for any $\de>0$,\[
  f\in \Big[H^{s-\frac12-\de}(\TT)\backslash H^{s-\frac12}(\TT)\Big] \cap
  \Big[B^{s-\frac12}_{2,\infty}(\TT)\backslash B^{s-\frac12+\de}_{2,\infty}(\TT)\Big]
\]
with probability~1; see Proposition~\ref{P.reg} for details.

Our main result provides an asymptotic estimate for the growth of
the expected number of critical points contained in a disk of
large radius~$R$, which we denote by
\[
  \crit := \#\{x\in B_R: \nabla u(x)=0\}\,,
\]
as a function of the
regularity parameter~$s$. It is
elementary that this quantity is an upper bound for the expected number of
nodal components contained in~$B_R$.
With the usual ansatz for random
plane waves, it is well known that
$\crit$ grows asymptotically like the area of the disk; more
precisely~\cite{Be20}, when $s=0$ one has
\[
  \bE\crit \sim \ka(0)\,R^2\,,
\]
where $\ka(0):= 1/(2 \sqrt3)$ and where the notation $q(R)\sim Q(R)$
means that the quotient ${q(R)}/{Q(R)}$ tends to~1 as $R\to\infty$.

We should mention from the onset that the effect of changing the
regularity parameter~$s$ can be quite drastic, as one should not
expect that the number of critical points grows like the area in all
regularity regimes. To illustrate this, recall that, when $s=0$, the
Nazarov--Sodin theory ensures the number of nodal components of~$u$
contained in~$B_R$ grows as
\[
N(u,R)\sim \nu_0R^2
\]
almost surely for some constant $\nu_0>0$. In contrast, the results proven in~\cite{EPR20}
show that
\[
N(u,R)\sim\nu_\infty R
\]
almost surely for $s>4$, with $\nu_\infty:=1/\pi$. Understanding the asymptotic
behavior of the number of nodal components in other regimes is an
extremely challenging open problem. Consequently, our main objective
in this paper is to analyze the intriguing transitions between distinct asymptotic
regimes in the simpler case of critical points.

In the case of critical points, it is also natural to wonder about the
asymptotic growth in the case of very negative
regularities $s<0$. Recall that, by the Faber--Krahn inequality, the
number of nodal components of a solution to the Helmholtz equation
contained in~$B_R$ is at most $cR^2$, where $c$ is a universal
constant. However, the number of critical points is not bounded a
priori: in Appendix~\ref{A.manycp} we show that, given any continuous
function $\rho:\RR^+\to\RR^+$, there exists a solution to the
Helmholtz equation on~$\RR^2$ having at least $\rho(R)$ nondegenerate
critical points in~$B_R$, for all $R>1$. Thus, one could in principle
expect the average number of critical points in a large ball~$R$ to
have a fast growth in~$R$ for small enough regularities.

Our main result provides a satisfactory, and quite surprising, answer
to both questions. It turns out that the growth of the expected number
of critical points is like the square of the radius for $s<\frac32$,
linear for $s>\frac52$, and the corresponding exponent changes according
to a linear interpolation law in the intermediate regime
$\frac32<s<\frac52$. The transitions occurring at the endpoint cases
involve not only a power law, but also the square root of the
logarithm of the radius. Furthermore, the highest asymptotic
growth of the expected number of critical points is attained exactly
for $s=0$, that is, in the usual setting of random plane waves.

\begin{theorem}\label{T.main} 
  For any real~$s$, the following statements hold:
  \begin{enumerate}
\item There exist explicit positive constants $\ka(s) ,\tka_{\frac32},\tka_{\frac52}$ such that the expected number of critical points of the Gaussian random
function~$u$ satisfies
\begin{align*}
  \bE {\crit}\sim
  \begin{cases}
    \ka(s) \,R^2 &\text{if }\quad s<\frac32\,,\\[2mm]
\tka_{\frac32} \frac{R^2}{ \sqrt{\log R}} &\text{if }\quad
                                                     s=\frac32\,,\\[2mm]
\ka(s) \,R^{2-(s-\frac32)} & \text{if }\quad \frac32<s<\frac52\,,\\[2mm]
\tka_{\frac52} R \sqrt{\log R} &\text{if }\quad
                                                     s=\frac52\,,\\[2mm]
                                                                   {\ka(s)
                                                                   }\,R
                                 &\text{if }\quad s>\frac52\,.
\end{cases}
\end{align*}
\item In the region where the growth of
$\bE\crit$ is volumetric, the constant $\ka(s) $ depends continuously
on~$s$. More precisely, $\ka(s)$ is a $C^\infty$ function of
$s\in (-\infty,\frac12)\cup(\frac12,\frac32]$ but it is only Lipschitz
at $s=\frac12$. Furthermore, $\ka(s)$ is strictly increasing on~$(-\infty,0)$, strictly decreasing on
$(0,\frac32)$, and tends to~$0$ as $s\to-\infty$ and as $ s\to
\frac32^-$. In the region $s\in(\frac32,\frac52)\cup (\frac52,\infty)$ the constant $\ka(s)$ is also $C^\infty$.
\end{enumerate}
\end{theorem}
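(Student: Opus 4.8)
The proof rests on the Kac--Rice formula applied to the random vector field $\nabla u$.

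Since $u$ is almost surely smooth (the series \eqref{defu} and all of its term-by-term derivatives converge locally uniformly because $J_l(r)$ decays super-exponentially in $l$ for $l\gg r$) and the Gaussian vector $\nabla u(x)$ is nondegenerate for every $x$, the Kac--Rice formula yields
\[
  \bE\crit=\int_{B_R}\rho(x)\,dx,\qquad
  \rho(x):=\bE\bigl[\,\vol{\det\operatorname{Hess}u(x)}\ \big|\ \nabla u(x)=0\,\bigr]\,p_{\nabla u(x)}(0),
\]
where $p_{\nabla u(x)}$ denotes the density of the $\RR^2$-valued Gaussian $\nabla u(x)$. The joint law of $(u,\nabla u,\operatorname{Hess}u)$ is invariant under rotations about the origin (the phase $e^{il\alpha}$ is absorbed into $a_l$ without changing its law), so $\rho(x)=\rho(r)$ with $r=\vol x$, and $\bE\crit=2\pi\int_0^R\rho(r)\,r\,dr$; everything is thereby reduced to the behaviour of $\rho(r)$ as $r\to\infty$. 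Taking the base point at $(r,0)$ and differentiating the covariance kernel, which up to a fixed positive constant equals
\[
  K(x,y)=\bE[u(x)u(y)]=\sum_{l\ge1}l^{-2s}J_l(r_x)J_l(r_y)\cos\bigl(l(\theta_x-\theta_y)\bigr),
\]
in the variables $r_x,r_y,\theta_x,\theta_y$ and restricting to the diagonal, every entry of the covariance matrices of $\nabla u(x)$ and $\operatorname{Hess}u(x)$ and of their cross-covariance becomes a Neumann series of the form $\sum_{l\ge1}l^{\beta}J_l^{(a)}(r)J_l^{(b)}(r)$ with $a,b\in\{0,1,2\}$ and $\beta\in\{-2s,\,2-2s,\,4-2s\}$ (after Bessel's equation is used to discard higher-order derivatives). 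Moreover the reflection $\theta\mapsto-\theta$, which fixes $(r,0)$ and preserves the law of $u$, decouples the ``even'' block spanned by $\partial_r u$, $\partial_r^2 u$, $r^{-2}\partial_\theta^2 u$ (and $u$ itself) from the ``odd'' block spanned by $r^{-1}\partial_\theta u$ and $r^{-1}\partial_r\partial_\theta u$; consequently the gradient covariance is diagonal, the conditional Hessian is a mean-zero Gaussian symmetric matrix $W$ with $W_{12}$ independent of $(W_{11},W_{22})$, and $\rho(r)$ is an explicit rational expression in finitely many of these Bessel series.

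The technical core of the argument --- and the step I expect to be the main obstacle --- is the derivation of sharp asymptotics, as $r\to\infty$, for these Neumann series. Writing $\beta$ for the exponent of $l$, one splits the sum into the Debye range $l\le(1-\epsilon)r$, the Airy window $\vol{l-r}\lesssim r^{1/3}$, and the tail $l\ge(1+\epsilon)r$. In the tail $J_l(r)$ is super-exponentially small; in the Airy window the contribution is $O(r^{\beta-1/3})$, hence lower order; and in the Debye range one uses the Debye asymptotics $J_l(r)^2\approx\frac1{\pi\sqrt{r^2-l^2}}$, $J_l'(r)^2\approx\frac{\sqrt{r^2-l^2}}{\pi r^2}$ on average (the oscillatory remainders being disposed of by an Abel summation that exploits the cancellation in $\sum(-1)^l l^{-\beta}$), which after a Riemann-sum estimate gives
\[
  \sum_{l\ge1}l^{\beta}J_l(r)^2\ \sim\
  \begin{cases}
    \dfrac{\Gamma\!\bigl(\tfrac{\beta+1}{2}\bigr)}{2\sqrt\pi\,\Gamma\!\bigl(\tfrac{\beta}{2}+1\bigr)}\,r^{\beta}, & \beta>-1,\\[3mm]
    \dfrac{\log r}{\pi r}, & \beta=-1,\\[3mm]
    \dfrac{\zeta(-\beta)}{\pi r}, & \beta<-1,
  \end{cases}
\]
together with analogous formulas for $\sum l^\beta J_l'(r)^2$ and for the mixed series (obtained by differentiating $\sum l^\beta J_l^2$ in $r$, since $2J_lJ_l'=(J_l^2)'$). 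The exact identities $\sum_{l\in\ZZ}J_l(r)^2=1$, $\sum_{l\in\ZZ}l^2J_l(r)^2=\tfrac{r^2}{2}$ and $\sum_{l\in\ZZ}J_l'(r)^2=\tfrac12$ serve as checks on the constants. The delicate points are to make the Airy-window and oscillatory errors genuinely subleading with remainders uniform enough to be integrated against $r\,dr$, and to handle the borderline exponent $\beta=-1$ --- which occurs exactly at the three thresholds $s=\tfrac12,\tfrac32,\tfrac52$ --- so that the logarithmic factors come out with the correct coefficients.

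Feeding these expansions into $\rho(r)$, the leading behaviour is governed by a competition of scales: the radial series ($\beta=-2s$) behave like $r^{-2s}$ for $s<\tfrac12$ but like a constant times $r^{-1}$ for $s>\tfrac12$, with a logarithm at $s=\tfrac12$; the angular gradient series and $r^{-1}\partial_r\partial_\theta u$ ($\beta=2-2s$) contribute $r^{-2s}$ to the relevant variance for $s<\tfrac32$ and $r^{-3}$ for $s>\tfrac32$; and the top Hessian entry $r^{-2}\partial_\theta^2 u$ ($\beta=4-2s$) contributes $r^{-2s}$ for $s<\tfrac52$ and $r^{-5}$ for $s>\tfrac52$. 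Since $\det W$ is dominated, away from the borderline cases, by $W_{11}W_{22}$ --- the product of the ``radial'' and ``top angular'' entries --- and the radial scale cancels between $(\det\Sigma_{\nabla u})^{-1/2}$ and $\bE\vol{\det W}$, one obtains $\rho(r)\to\ka(s)/\pi$ for $s<\tfrac32$, $\rho(r)\sim c_{3/2}/\sqrt{\log r}$ at $s=\tfrac32$, $\rho(r)\sim c(s)\,r^{3/2-s}$ for $\tfrac32<s<\tfrac52$, $\rho(r)\sim c_{5/2}\sqrt{\log r}/r$ at $s=\tfrac52$, and $\rho(r)\sim c(s)/r$ for $s>\tfrac52$ (in the last range $W_{12}^2$ is no longer negligible and $c(s)$ is a genuine two-dimensional Gaussian-determinant quantity). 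Integrating $2\pi\int_0^R\rho(r)\,r\,dr$ --- the weight $r\,dr$ makes the integral dominated by large $r$, any fixed disk contributing $O(1)$ which is absorbed into the error --- gives part~(i), with $\tka_{\frac32}=\pi c_{3/2}$, $\tka_{\frac52}=2\pi c_{5/2}$, $\ka(s)=\pi\lim_{r\to\infty}\rho(r)$ for $s<\tfrac32$, and $\ka(s)=2\pi c(s)/(\tfrac72-s)$ respectively $\ka(s)=2\pi c(s)$ in the ranges $\tfrac32<s<\tfrac52$ and $s>\tfrac52$.

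For part~(ii) one extracts the closed form of $\ka(s)$ from the limiting Kac--Rice density: on $s<\tfrac32$ it is a rational combination of the Gamma-function constants appearing above (with the $\zeta$-values cancelling on $\tfrac12<s<\tfrac32$, leaving an elementary function that vanishes as $s\to\tfrac32^-$), and on $(\tfrac32,\tfrac52)$ and $(\tfrac52,\infty)$ it is a similar combination involving $\zeta(2s),\zeta(2s-2),\zeta(2s-4)$. The asserted $C^\infty$-regularity away from $s=\tfrac12$, the monotonicity ($\ka'>0$ on $(-\infty,0)$, $\ka'<0$ on $(0,\tfrac32)$), and the limits $\ka(s)\to0$ as $s\to-\infty$ (from Stirling's asymptotics of the Gamma-function constants, whose relevant combination decays in that limit) and as $s\to\tfrac32^-$ are then obtained by direct computation; the exceptional point $s=\tfrac12$ is a Lipschitz-but-not-$C^1$ kink because there the radial series change branch, from $\sim(\text{const})\,r^{-2s}$ valid for $s<\tfrac12$ to $\sim\zeta(2s)\,r^{-1}/\pi$ valid for $s>\tfrac12$, so that the two one-sided closed forms for $\ka$ match in value but not in derivative.
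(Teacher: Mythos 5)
Your overall strategy (Kac--Rice in polar coordinates, reduction of all covariances to Neumann series $\sum_l l^{\beta}J_l^{(a)}(r)J_l^{(b)}(r)$, and a frequency-split asymptotic analysis of those series) is exactly the paper's, and your constants in the regime $\beta>-1$ agree with Corollary~\ref{C.formulas} after the duplication formula. But there is a genuine gap in your stated asymptotics for $\beta<-1$: the claim $\sum_{l\ge1}l^{\beta}J_l(r)^2\sim\zeta(-\beta)/(\pi r)$ is false. When the sum is dominated by bounded $l$, each product contributes $\frac{\cos(\frac{\pi\nu}{2})-\sin(\pi l+\frac{\pi\mu}{2}-2r)}{\pi r}+O(l^4r^{-2})$, and the oscillatory part sums to $(-1)^l$ against $l^{\beta}$, i.e.\ to the nonzero Dirichlet eta value $(1-2^{1+\beta})\zeta(-\beta)$. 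So the correct leading term is $\frac{\zeta(2s)\left(1+(2^{1-2s}-1)\sin 2r\right)}{\pi r}$ (Lemma~\ref{L.Bessel}, case $s>\frac12$): an $O(r^{-1})$ \emph{oscillation that survives at leading order}. Your appeal to ``Abel summation exploiting the cancellation in $\sum(-1)^l l^{-\beta}$'' does not dispose of it, because that alternating sum converges to a nonzero constant rather than vanishing.

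This is not a cosmetic issue. For $\frac12<s<\frac32$ the entry $\Sigma_{33}(r)$, and for $s>\frac32$ every entry of $\Sigma(r)$, oscillates at leading order, and these oscillations enter the Kac--Rice density nonlinearly (inside square roots, ratios and the absolute value). Consequently $\rho(r)$ does \emph{not} converge to a constant for $s>\frac12$; one only has $\cI(r)\sim r^{a}(\log r)^b P(\sin 2r)$ for a periodic profile $P$, and one must average over a period (the paper's Lemma~\ref{L.pi/R}, together with identities such as $\int_0^\pi\frac{dr}{1+b\sin 2r}=\frac{\pi}{\sqrt{1-b^2}}$) to extract the constants. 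This averaging is precisely what produces $\ka(s)=\frac1\pi\sqrt{\frac{3-2s}{4-2s}}$ on $(\frac12,\frac32)$ — the factor $\sqrt{1-b^2}$ with $b=1-2^{1-2s}$ cancels the $2^{-2s}$-dependence — whereas dropping the oscillation yields a different, incorrect function of $s$, so the monotonicity, the Lipschitz-but-not-$C^1$ matching at $s=\frac12$, and the explicit constants for $s\ge\frac32$ in part~(ii) cannot be verified from your computation as written. A secondary (fixable) omission: your Airy-window split near $l\approx r$ needs uniform turning-point asymptotics to control the almost-degenerate stationary phase there; the paper instead isolates a window of width $\de r$, bounds its contribution by $C\de^{1/2}r^{-2s}$ via a comparison of the Riemann sum with its continuous counterpart (Lemma~\ref{L.dos}), and lets $\de\to0$.
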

 \begin{figure}[h]
	\centering
	\includegraphics[width=.9\linewidth]{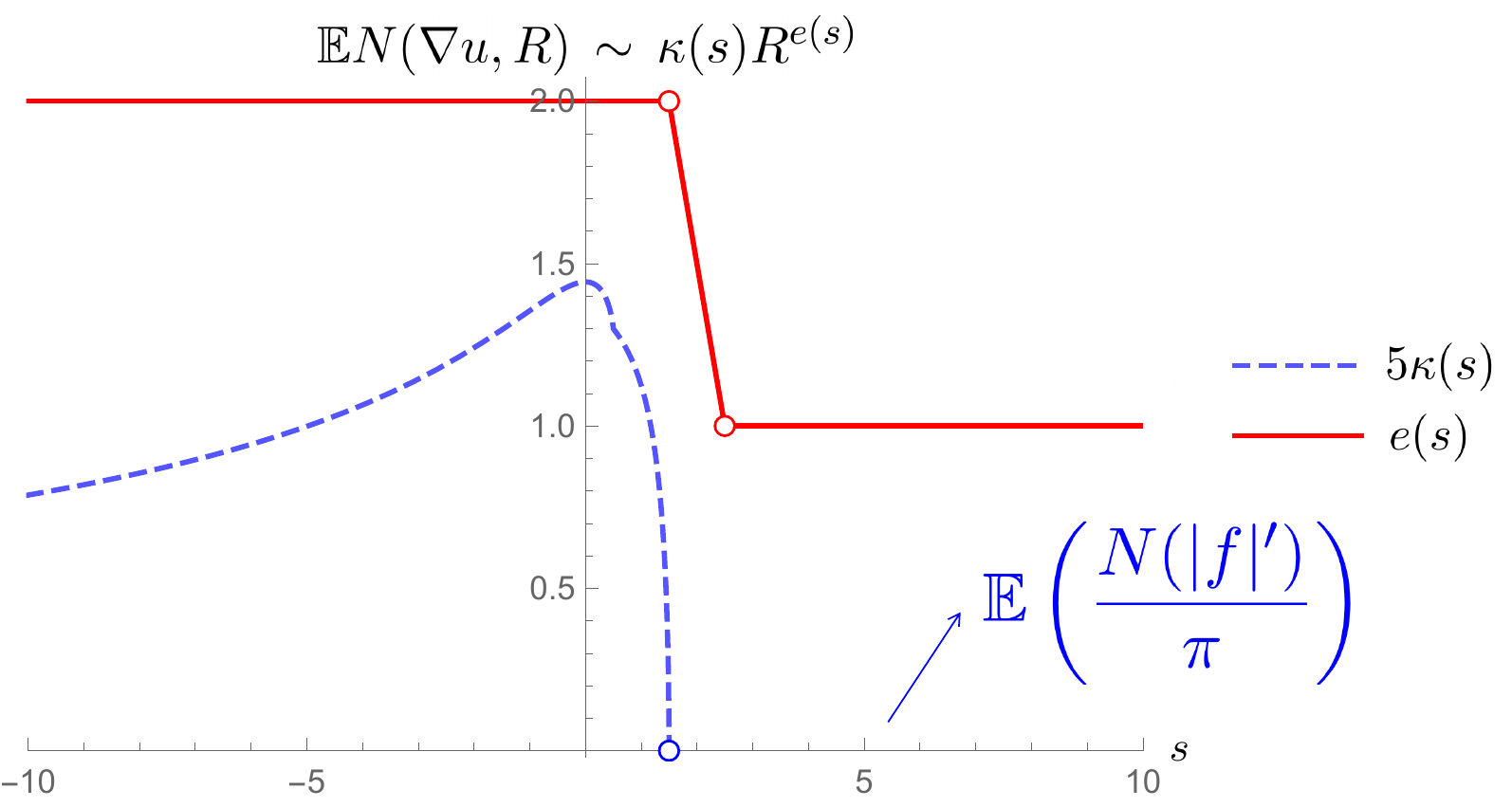}
	\caption{Consider the asymptotic behavior of $\bE\crit \sim
          \kappa(s)R^{e(s)}$ proved in Theorem~\ref{T.main}. 
          In red, we have plotted the exponent $e(s)$ as a function of
          $s\in\RR\backslash\{\frac32,\frac52\}$. Logarithmic effects appear at the endpoints $s=3/2$   and  $s=5/2$. In blue, we have
          plotted $\ka(s)$ in the region where the asymptotic growth
          is volumetric, $s<\frac12$. The maximum of
          $\ka(s)$ in this region is attained at $s=0$ and that
          $\ka(s)$ is not continuously differentiable at
          $s=1/2$. The reader can find a plot of $\kappa(s)$ in the
          range $s\in(\frac32,\frac52)$ in Figure~\ref{F.2}, cf.\
          Section~\ref{S.main}. Note that $\kappa(s)={\bE N(|f|')/\pi
          }$ by Theorem~\ref{T.as}. 
}
	\label{F.3}
\end{figure}
Figure~\ref{F.3} summarizes Theorem~\ref{T.main} in a more visual way. The fact that the highest asymptotic growth for the number of critical
points occurs precisely in the translation-invariant case $s=0$ is
somewhat surprising. Naively one could expect that rougher density
functions, which feature wilder oscillations, would exhibit more
critical points. Theorem~\ref{T.main} shows that, strictly speaking,
this is only the case for regularities $s>0$.

Let us now discuss the proof of Theorem~\ref{T.main}. The asymptotic analysis
of $\crit$ hinges on the celebrated Kac--Rice counting formula, which,
under suitable technical hypotheses,
expresses the expected number of zeros of a random field (in this
case, the gradient $\nabla u$) has in terms of a multivariate
integral. As is well known, this formula has been used profusely in the literature~\cite{EF16,NS16,Be20,BMW19}, and
in particular lies at the heart of the computation of $\bE\crit$ for
$s=0$ and of the finer asymptotics bounds for the expected number of extrema and
saddle points and for higher order correlations obtained
in~\cite{Be20} also in the translation-invariant case $s=0$.

The coefficients that appear in the Kac--Rice integral formula
involve, via the variance matrix of $\nabla u$, weighted series of
Bessel functions of the form
\begin{equation}\label{cJ}
\cJ_{s,m,m'}(r):= \sum_{l=1}^\infty l^{-2s} J_{l+m}(r)\, J_{l+m'}(r)\,,
\end{equation}
where $m$ and $m'$ are certain integers. $\cJ_{s,m,m'}$ is sometimes called in the literature a second type Neumann series. It is clear that the way each
term $J_{l+m}(r)\, J_{l+m'}(r)$ contributes to the sum for $r\gg1$ and
$l\gg1$ will depend on whether the ``angular frequency'' $l$ is much
larger than~$r$, much smaller than~$r$, or roughly of the same size;
moreover, the effect of each group of angular frequencies will have a
different relative weight in the sum depending on the power~$s$
appearing in $l^{-2s}$. More precisely, a key step of the proof is
to establish the following technical result, which controls the
asymptotic behavior of~$\cJ_{s,m,m'}(r)$:

\begin{lemma}\label{L.Bessel}
  For any pair of nonnegative integers $m,m'$ and any real~$s$, the large-$r$
  asymptotic behavior of $\cJ_{s,m,m'}$ is
  \begin{align*}
\cJ_{s,m,m'}(r)&= c^1_{s,m-m'}\, r^{-2s} + o(r^{-2s}) &\text{if}\quad &
                                                                s<\tfrac12\,,\\
    \cJ_{s,m,m'}(r)&= c^2_{m-m'}\frac{\log r}r +
                     O(r^{-1}) &\text{if}\quad &
                                                                s=\tfrac12
                                                                                              \text{
                                                                                              and
                                                                                              $m-m'$
                                                                                              is
                                                 even}\,,\\
    \cJ_{s,m,m'}(r)&= \frac{c^3_{m-m'}-c^4\sin(2r-c^7_{m+m'})}r +
                     o(r^{-1}) &\text{if}\quad &
                                                                s=\tfrac12
                                                                                              \text{
                                                                                              and
                                                                                              $m-m'$
                                                                                              is
                                                 odd}\,,\\
    \cJ_{s,m,m'}(r)&= \frac{c^5_{s,m-m'} -c^6_{s}\sin(2r-c^7_{m+m'})}r +
                     o(r^{-1}) &\text{if}\quad &
                                                                s>\tfrac12
  \end{align*}
  with some explicit constants that will be defined later on.
               \end{lemma}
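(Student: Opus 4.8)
The plan is to analyze $\cJ_{s,m,m'}(r)$ by decomposing the Neumann series according to the size of the summation index $l$ relative to $r$. Using a smooth partition of unity $1=\chis+\chim+\chil$ adapted to the ranges $l\lesssim r$, $l\sim r$ and $l\gtrsim r$ — with the ``medium'' window of width of order $r^{1/3}$ centered at the turning point $l=r$ — the Bessel factors behave qualitatively differently in the three ranges: oscillatory, described by the Hankel expansion for $l\ll r$ and the Debye expansion for $l$ comparable to $r$, in the first range; of Airy type and of size $r^{-1/3}$ in the second; and super-exponentially small in the third. I expect the entire content of the lemma to come from the first (oscillatory) range, the other two being error terms in every regime.

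The two ``error'' ranges are easy. For $l\ge(1+\de)r$ the standard decay bound $|J_\nu(r)|\lesssim e^{-cr}$ makes this part of the sum $O(e^{-cr})$, which is negligible against every term on the right-hand side (the polynomial weight $l^{-2s}$ being harmless). For $|l-r|\lesssim r^{1/3}$ the uniform turning-point bound $|J_\nu(r)|\lesssim r^{-1/3}$, together with the Debye amplitude $|J_\nu(r)|\lesssim r^{-1/4}(r-\nu)^{-1/4}$ for $\nu<r$ and the Airy decay into $\nu>r$, gives a contribution of size $O(r^{-2s-1/3})$, which is $o(r^{-2s})$ and, in the cases where the answer is of size $r^{-1}$ (those being exactly $s\ge\tfrac12$), also $o(r^{-1})$. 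Within the remaining range $l\lesssim r$ I would split once more at $l\sim r^\al$ for a small $\al\in(0,1)$: Hankel asymptotics on $1\le l\le r^\al$, Debye asymptotics on $r^\al\le l\le r-r^{1/3}$.

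On the Hankel range, writing $\om_\nu:=r-\tfrac{\nu\pi}2-\tfrac\pi4$, one has $\om_{l+m}-\om_{l+m'}=-\tfrac{(m-m')\pi}2$ (constant) and $\om_{l+m}+\om_{l+m'}=2r-l\pi-\tfrac{(m+m')\pi}2-\tfrac\pi2$, whence
\[
J_{l+m}(r)J_{l+m'}(r)=\frac1{\pi r}\Big[\cos\tfrac{(m-m')\pi}2+(-1)^l\sin\big(2r-\tfrac{(m+m')\pi}2\big)\Big]+(\text{lower order}).
\]
Summing $l^{-2s}$ against the constant piece produces $\tfrac1{\pi r}\cos\tfrac{(m-m')\pi}2\sum_{l\le r^\al}l^{-2s}$, with $\sum_{l\le r^\al}l^{-2s}=\zeta(2s)+o(1)$ if $s>\tfrac12$, $=\al\log r+O(1)$ if $s=\tfrac12$, and $=o(r^{1-2s})$ (hence negligible after the factor $r^{-1}$) if $s<\tfrac12$; summing against the alternating piece produces, that frequency being resonant, the convergent series $\sum_{l\ge1}(-1)^l l^{-2s}=-\eta(2s)$ (equal to $-\log2$ at $s=\tfrac12$), which is the origin of the oscillatory terms $\propto\sin(2r-c^7_{m+m'})$ in the last two cases, with $c^7_{m+m'}=\tfrac{(m+m')\pi}2$.

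On the Debye range the decisive identity is that the difference of the Debye phases of orders $l+m$ and $l+m'$ equals, to leading order, the constant $-(m-m')\arccos(l/r)$, so that the non-oscillatory part of the product is $\tfrac1{\pi r}\,T_{|m-m'|}(l/r)\,(1-(l/r)^2)^{-1/2}$, $T_k$ being the Chebyshev polynomial, while the genuinely $l$-oscillatory part has phase with $l$-derivative $-2\arccos(l/r)$, nonvanishing away from the excised turning point, hence killed by a discrete van der Corput estimate. Converting the surviving sum to an integral by Euler--Maclaurin, it equals $\tfrac{r^{-2s}}\pi\int_{r^{\al-1}}^{1-r^{-2/3}}x^{-2s}\,T_{|m-m'|}(x)\,(1-x^2)^{-1/2}\,dx$; for $s<\tfrac12$ this converges to $c^1_{s,m-m'}r^{-2s}$ with $c^1_{s,m-m'}=\tfrac1\pi\int_0^1 x^{-2s}T_{|m-m'|}(x)(1-x^2)^{-1/2}\,dx=\tfrac1\pi\int_0^{\pi/2}(\cos\te)^{-2s}\cos((m-m')\te)\,d\te$, giving the first case; for $s>\tfrac12$ the divergence of the integral at $r^{\al-1}$ cancels the $\al$-dependent part of the Hankel sum and leaves $\tfrac1{\pi r}\zeta(2s)\cos\tfrac{(m-m')\pi}2+o(r^{-1})$, yielding $c^5_{s,m-m'}=\tfrac1\pi\zeta(2s)\cos\tfrac{(m-m')\pi}2$ and $c^6_s=\tfrac1\pi\eta(2s)$; for $s=\tfrac12$ with $m-m'$ even the integral diverges logarithmically and combines with the $\al\log r$ of the Hankel sum (the $\al$-dependence cancelling) to give $c^2_{m-m'}=\tfrac1\pi\cos\tfrac{(m-m')\pi}2$; and for $s=\tfrac12$ with $m-m'$ odd one has $T_{|m-m'|}(0)=0$, so the integral converges, there is no logarithm, and this integral together with the surviving $-\tfrac{\log2}{\pi r}\sin(2r-c^7_{m+m'})$ gives the third case. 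I expect the main difficulty to lie not in any individual computation but in making the remainders in all the uniform Bessel asymptotics genuinely uniform in $r$ across the entire transition, and in checking that the spurious dependence on the auxiliary cutoffs $\al$ and $\de$ cancels in the final answer, as it must.
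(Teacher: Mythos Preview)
Your outline is correct and arrives at the same constants as the paper, but it is organized rather differently, and it is worth recording the differences.

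The paper never invokes the named Bessel asymptotics (Hankel, Debye, Airy) as inputs. Instead it writes each factor $J_{l+m}(r)$ via the integral representation $J_l(r)=(2\pi)^{-1}\int_{-\pi}^\pi e^{ir\sin x-ilx}\,dx$, so that $l^{-2s}J_{l+m}(r)J_{l+m'}(r)=(4\pi^2)^{-1}l^{-2s}g_{\la_l}(r)$ with $g_\la$ a double oscillatory integral; stationary phase applied to $g_\la$ (Lemma~\ref{L.asymptgla}) produces in one stroke the expression $(\pi r\sqrt{1-\la^2})^{-1}[\cos(\nu\arccos\la)+\sin(2rf(\la)-\mu\arccos\la)]$, which is exactly your Debye product formula (your $T_{|m-m'|}(l/r)(1-(l/r)^2)^{-1/2}$ is their $\cos(\nu\arccos\la)(1-\la^2)^{-1/2}$). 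So the two routes coincide at the level of the summand; the difference is that the paper derives this expansion, with a controlled $O_\la(r^{-2})$ remainder, rather than importing it. The paper also uses a medium window of \emph{fixed} relative width~$\de$ (so $|l-r|<2\de r$) and proves the delicate bound $|\dos|\le C\de^{1/2}r^{-2s}$, letting $\de\to0$ only at the very end, whereas you take a window of width~$r^{1/3}$ and use the crude pointwise Airy bound $|J_\nu(r)|\lesssim r^{-1/3}$ to get $O(r^{-2s-1/3})$ directly. Your treatment of the transition is simpler but relies on the uniform turning-point estimates as a black box; the paper's Lemma~\ref{L.dos} is more work but self-contained.

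For the oscillatory part of the small-frequency sum, your appeal to a discrete van der Corput estimate is correct in spirit, but be aware that the phase derivative $-2\arccos(l/r)$ sweeps from values near $-\pi$ (at small~$l$, where the sum behaves like the Hankel alternating series and produces the $\eta(2s)$, hence the $-\log 2$ at $s=\tfrac12$) down to values near~$0$ at the upper cutoff, so a single first-derivative bound is not enough; one must either localize in~$l$ as the paper does in Lemma~\ref{L.uno''} (breaking $[\lceil r^\be\rceil,(1-\de)r]$ into $\approx r^{1-\ga}$ intervals of length $\approx r^\ga$ on which the phase is approximately linear) or use a second-derivative van der Corput. Likewise, your claim that for $s>\tfrac12$ ``the divergence of the integral at $r^{\al-1}$ cancels the $\al$-dependent part of the Hankel sum'' is true but somewhat beside the point: for $s>\tfrac12$ the entire Debye-range contribution is already $O(r^{-1-\al(2s-1)})=o(r^{-1})$, so the leading order comes purely from the Hankel range, exactly as in the paper's Lemma~\ref{lem:G1 alpha le -1}. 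None of this is a genuine gap; it just means the honest write-up of your approach would end up with essentially the same case analysis as Section~\ref{S.Bessel}.
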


Ultimately, the different asymptotic regimes that the expectation of
$\crit$ can exhibit can be traced back to the asymptotic behavior of
functions of the form~\eqref{cJ}. One should note that, in general, the highly
oscillatory nature of summands in~\eqref{cJ} makes the analysis of the
asymptotic behavior of $\cJ_{s,m,m'}(r)$ rather subtle. An
exception to this general fact is precisely the case $s=0$, where all the
associated series can be computed exactly using that the covariance
kernel of~$u$ is translation-invariant (or, equivalently, the addition formula
for Bessel functions); this makes it much easier to analyze the corresponding
asymptotic behavior of $\bE \crit$. To illustrate this fact, in the very short
Appendix~\ref{ApCompTI} we carry out the analysis of the translation
invariant case $s=0$.

In the particular case of smooth enough density functions,
one can use the methods of our previous paper~\cite{EPR20} to
understand the asymptotic behavior of the number of critical points
(not only of its expectation value) in greater detail. Specifically, one can prove the following:
\begin{theorem}\label{T.as}
  If $s>5$,
  \[
\crit \sim {\frac{N(|f|')}{\pi} R}
\]
with probability~$1$.  In particular, $\crit$ grows linearly almost surely.
\end{theorem}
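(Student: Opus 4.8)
The plan is to exploit the fact that for $s>5$ the density $f$ defined by~\eqref{deff} is smooth enough that the random wave $u$ is, after rescaling, a small perturbation of a very explicit ``leading'' wave, following the strategy of~\cite{EPR20}. Concretely, write $u(x)=u(r,\theta)$ in polar coordinates and observe that the Bessel asymptotics $J_l(r)=\sqrt{2/(\pi r)}\cos(r-l\pi/2-\pi/4)+O(r^{-3/2})$, applied termwise to~\eqref{defu} — which is legitimate because $\sum_{l\neq0}|a_l|\,|l|^{-s}<\infty$ almost surely once $s>1$, so the series and its error terms converge absolutely and uniformly on $r\geq 1$ — gives, for $r\gg 1$,
\[
  u(r,\theta)=\sqrt{\frac{2}{\pi r}}\,\Real\!\Big(e^{i(r-\pi/4)}\,\overline{g(\theta)}\Big)+O(r^{-3/2}),
  \qquad g(\theta):=\sum_{l\neq0}a_l\,i^{-l}|l|^{-s}e^{il\theta},
\]
and note $g$ is (up to a constant and a rotation of the index) essentially $2\pi f$, so $g$ inherits the regularity of $f$; in particular for $s>5$ we have $g\in C^4(\TT)$ almost surely. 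The upshot is that the critical points of $u$ in $B_R\setminus B_{R_0}$ are in bijection, via the implicit function theorem, with the critical points of the function $h(r,\theta):=\cos(r-\pi/4)\,a(\theta)+\sin(r-\pi/4)\,b(\theta)$ where $a+ib:=\overline{g}$; and a short computation shows that the radial equation $\partial_r h=0$ together with $\partial_\theta h=0$ forces, to leading order, $\theta$ to be a critical point of $|g|^2$, i.e.\ of $|f|$, equivalently a zero of $|f|'$, while for each such $\theta$ the equation $\partial_r=0$ is solved on a lattice of spacing $\pi$ in $r$. This produces $\approx \frac{R}{\pi}N(|f|')$ critical points.

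The key steps, in order, would be: (i) establish the almost-sure absolute convergence of~\eqref{defu} and of the Bessel-expansion remainder, so that $u$ and $\nabla u$ admit uniform asymptotic expansions on $B_R\setminus B_{R_0}$ with the stated error — this is where $s>5$ (rather than the $s>4$ of~\cite{EPR20} for nodal sets, or $s>5/2$) is needed, since controlling $\nabla u$ rather than $u$ costs one derivative and the quantitative stability argument for critical points costs another; (ii) show that almost surely $f$ has only finitely many, nondegenerate critical points on $\TT$ — hence $N(|f|')<\infty$ and the zeros of $|f|'$ are simple — which follows from the fact that $f$ is a nondegenerate Gaussian field in $C^2$ (Bulinskaya-type argument, as in Proposition~\ref{P.reg}); (iii) a deterministic perturbation lemma: if $h_R$ is a family of $C^2$ functions on the annulus converging in $C^2$, after the appropriate rescaling, to a nondegenerate model $h_\infty$, then for $R$ large the number of critical points of $h_R$ in the annulus equals that of $h_\infty$, counted with the lattice multiplicity in $r$; (iv) sum over dyadic annuli $B_{2^{k+1}}\setminus B_{2^k}$ for $R_0\le 2^k\le R$, control the contribution of the innermost ball $B_{R_0}$ (a finite, hence $o(R)$, number of critical points almost surely), and conclude $\crit\sim \frac{R}{\pi}N(|f|')$.

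The main obstacle is step~(iii), the deterministic bijection between critical points of $u$ and those of the model $h_\infty$ in each annulus: near a radius where $\cos(r-\pi/4)$ and $\sin(r-\pi/4)$ conspire with the degeneracies of the angular profile, the Hessian of the model can be nearly singular, and one must quantify how close $u$ stays to $h_\infty$ relative to the size of the smallest eigenvalue of that Hessian. The resolution is that because $f$ has only \emph{nondegenerate} critical points on $\TT$ (step~(ii)), the model $h_\infty$ has nondegenerate critical points away from an exceptional discrete set of radii, and near those radii a direct analysis of the $2\times 2$ system $\nabla h=0$ — treating $r$ as the fast variable — shows the critical points persist with the right count; the uniform $C^2$-smallness of the remainder from step~(i), together with a compactness/covering argument on $\TT$, then upgrades this to a genuine bijection. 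This is exactly the scheme carried out in~\cite{EPR20} for the nodal set, and the adaptation to $\nabla u$ is what forces the regularity threshold up to $s>5$; once the perturbation lemma is in place, steps~(i), (ii) and~(iv) are routine given Proposition~\ref{P.reg} and the Borel--Cantelli argument over dyadic annuli.
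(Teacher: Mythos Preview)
Your overall architecture matches the paper's: asymptotic expansion of $u$ via Bessel asymptotics (the paper's Lemma~\ref{L.stationary}), a deterministic perturbation/bijection lemma identifying critical points of~$u$ with those of the model (the paper's Lemma~\ref{prop: critical points}), and then almost-sure input about~$f$. The dyadic/Borel--Cantelli packaging in your step~(iv) is unnecessary, since the deterministic lemma already handles all large~$r$ at once, but that is cosmetic.

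The genuine gap is in your step~(ii). You write that the nondegeneracy of the critical points of~$|f|$ ``follows from the fact that~$f$ is a nondegenerate Gaussian field in~$C^2$ (Bulinskaya-type argument).'' This is not a routine application. The quantity whose zeros you must exclude is the pair $\big(|f|\,|f|',\,|f|\,|f|''+(|f|')^2\big)$, equivalently
\[
Y=\big(f\R f\R'+f\I f\I',\; f\R f\R''+f\I f\I''+(f\R')^2+(f\I')^2\big),
\]
which is a \emph{polynomial} functional of the Gaussian $2$-jet of~$f$, not itself Gaussian. Bulinskaya's lemma requires the density of~$Y(\phi)$ to be bounded near the origin, and for a non-Gaussian field this is not automatic: one must push forward the Gaussian density of the $2$-jet through the polynomial map and show the resulting marginal stays bounded. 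This is precisely where the paper spends most of its effort: it reduces the boundedness to an explicit integral inequality (Lemma~\ref{L.intQ}) and devotes an entire subsection to proving it via several changes of variables and careful case analysis. Your proposal treats this as immediate, which it is not; without it the perturbation argument in step~(iii) cannot be launched, because the model Hessian may degenerate. A separate, easier, Bulinskaya step is also needed to ensure $f$ does not vanish on~$\TT$ (so that $|f|$ is smooth at all), which you do not mention.
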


Here the random variable $N(|f|'):=\#\{\phi\in\TT:
|f(\phi)|'=0\}$ (which is at least~$2$ almost surely) denotes
the number of critical points of the (non-Gaussian) random function~$|f|$. In
particular, the asymptotic growth of $\crit$ is linear with probability~1,
albeit the ratio is not a universal constant but a random variable. In view of
Theorem~\ref{T.main}, a
consequence of this asymptotic formula is an explicit formula for the
expectation $\bE N(|f|')$ when $s>5$.

The paper is organized as follows. In Section~\ref{S.regularity}, we
start by showing the relation between the parameter~$s$ and the
regularity of the random function~$u$. Sections~\ref{S.Bessel},
\ref{S.main} and~\ref{ap:smooth case} are respectively devoted to the
proofs of Lemma~\ref{L.Bessel} and Theorems~\ref{T.main}
and~\ref{T.as}. We have divided each of these sections into a number
of subsections to emphasize the main ideas of each proof. The paper
concludes with two Appendices. In Appendix~\ref{A.manycp}, we construct solutions to the
Helmholtz equation on the plane for which the number of nondegenerate
critical points contained in~$B_R$ grows arbitrarily fast as
$R\to\infty$. In Appendix~\ref{ApCompTI}, we revisit the
translation-invariant case ($s=0$) and explain the key simplifications
that appear in this extremely important case.

\section{Almost sure regularity of the random density function}
\label{S.regularity}

Our objective in the section is to show that, with probability~1, the
Gaussian random function~$f$, defined in~\eqref{deff}, has exactly
$s-\frac12$ derivatives in~$L^2$, measured using suitable Sobolev or
Besov spaces.

To prove the main result we will need the following version of the
strong law of large numbers for sequences of random variables that are labeled by two integers:

\begin{lemma}\label{lem:double SLLN} Let $\{K_N\}_{N=1}^\infty$ be a
  sequence of positive integers such that 
  \[
\liminf_{M\to\infty}\frac{K_M}{\sum_{N=1}^MK_N}>0\,.
        \]
    If $\{b_{N,k}: 1\leq k\leq K_N,\: N\geq1\}$ are i.i.d. ~random
variables with mean $\mu$, then 
	\begin{equation*}
		\lim_{N\to\infty}\frac{1}{K_N}\sum_{k=1}^{K_N}b_{N,k}=\mu
	\end{equation*}
almost surely.
\end{lemma}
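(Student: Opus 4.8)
The plan is to deduce this two-index strong law from the classical (one-index) Kolmogorov strong law of large numbers by a careful re-indexing argument, exploiting the assumed positive-density condition on the block sizes $K_N$. First I would enumerate all the variables $b_{N,k}$ in a single sequence respecting the block order: set $S_M:=\sum_{N=1}^M K_N$ and list the variables as $c_1,c_2,\dots$, where the block $N$ occupies the positions $S_{N-1}<j\le S_N$ (with $S_0:=0$). Since the $b_{N,k}$ are i.i.d.\ with mean $\mu$ (and, being i.i.d., automatically in $L^1$, with finite variance too since they are identically distributed — although $L^1$ suffices for Kolmogorov), the sequence $\{c_j\}$ is i.i.d.\ with mean $\mu$, so the classical strong law gives $\frac1M\sum_{j=1}^M c_j\to\mu$ almost surely, and hence also $\frac1{S_N}\sum_{j=1}^{S_N}c_j\to\mu$ almost surely along the subsequence $M=S_N$.

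Next I would relate the block average $\frac1{K_N}\sum_{k=1}^{K_N}b_{N,k}$ to these partial sums. Writing $T_N:=\sum_{k=1}^{K_N}b_{N,k}=\sum_{j=S_{N-1}+1}^{S_N}c_j$, we have the telescoping identity
\[
\frac{T_N}{K_N}=\frac{S_N}{K_N}\cdot\frac1{S_N}\sum_{j=1}^{S_N}c_j-\frac{S_{N-1}}{K_N}\cdot\frac1{S_{N-1}}\sum_{j=1}^{S_{N-1}}c_j
=\frac{S_N}{K_N}\,A_N-\frac{S_{N-1}}{K_N}\,A_{N-1},
\]
where $A_N:=\frac1{S_N}\sum_{j=1}^{S_N}c_j\to\mu$ a.s. Since $S_N=S_{N-1}+K_N$, one has $\frac{S_{N-1}}{K_N}=\frac{S_N}{K_N}-1$, so the right-hand side equals $\frac{S_N}{K_N}(A_N-A_{N-1})+A_{N-1}$. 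The hypothesis $\liminf_M K_M/S_M>0$ means that $S_N/K_N$ is bounded above, say by $C<\infty$, for all large $N$; and $A_N-A_{N-1}\to0$ a.s.\ because both converge to $\mu$. Therefore $\frac{S_N}{K_N}(A_N-A_{N-1})\to0$ a.s., and $A_{N-1}\to\mu$ a.s., which gives $\frac{T_N}{K_N}\to\mu$ almost surely, as claimed.

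The only genuine subtlety — and the step I would be most careful about — is the use of the density hypothesis to control the ratio $S_N/K_N$: it is precisely $\liminf_M K_M/S_M>0$ that prevents the "boundary" block from being negligible relative to the accumulated mass, which is what would otherwise break the passage from the averaged partial sums $A_N$ to the block averages $T_N/K_N$. (Without it, e.g.\ with $K_N=N$, the difference $A_N-A_{N-1}$ decays only like $1/N$ while $S_N/K_N\sim N/2$, and no cancellation of this type is available — one would need a separate argument, which is exactly the situation the hypothesis is designed to exclude.) Everything else is the classical strong law plus elementary algebra, so I would keep that part brief.
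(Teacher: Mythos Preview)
Your proof is correct and follows essentially the same approach as the paper: both apply the classical strong law to the concatenated sequence, write the block average as a telescoping difference of the cumulative averages, and invoke the density hypothesis to bound $S_N/K_N$. One minor quibble with your parenthetical: being i.i.d.\ does \emph{not} by itself imply $L^1$ (let alone finite variance) --- it is the hypothesis ``mean $\mu$'' that gives integrability, which as you note is all Kolmogorov requires.
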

\begin{proof}
	The strong law of large numbers ensures that
	\begin{equation}\label{eq:SLLN normal}
			S_M:=	\frac{1}{Q_M}\sum_{N=1}^M\sum_{k=1}^{K_N}b_{N,k}-\mu
                      \end{equation}
converges to $0$                      almost surely as $M\to\infty$, with $Q_M:= \sum_{N=1}^MK_N $.
Thus, from the identity
\begin{equation*}
	S_M =\frac{Q_{M-1}}{Q_M} S_{M-1}+\frac{K_M}{Q_M}\left(\frac{1}{K_M}\sum_{k=1}^{K_M}b_{M,k}-\mu\right)
      \end{equation*}
  and the fact that $Q_{M-1}/Q_M\leq 1$ we obtain
\begin{align*}
\limsup_{M\to\infty}\left|\frac{1}{K_M}\sum_{k=1}^{K_M}b_{M,k}-\mu\right|\leq \frac{\lim_{M\to\infty}(|S_M|+|S_{M-1}|)}{\liminf_{M\to\infty}\frac{K_M}{Q_M}}=0
\end{align*}
almost surely. Notice that we have used the assumption $\liminf_{M\to\infty}\frac{K_M}{Q_M}>0$. The lemma then follows.
\end{proof}

We are now ready to prove the main result of this section. Here and in
what follows, we shall use the notation
$ q\approx Q$ or $q\lesssim Q$ when there exists a constant $C$ (independent of the
large parameter under consideration) such that
$Q/C\leq q\leq C Q$ or $q\leq CQ$, respectively.

\begin{proposition}\label{P.reg}
  For each $\de>0$, the Gaussian random function~\eqref{deff} satisfies
\[
  f\in \Big[H^{s-\frac12-\de}(\TT)\backslash H^{s-\frac12}(\TT)\Big] \cap
  \Big[B^{s-\frac12}_{2,\infty}(\TT)\backslash B^{s-\frac12+\de}_{2,\infty}(\TT)\Big]
\]
  almost surely.
\end{proposition}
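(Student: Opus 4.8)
The plan is to treat separately the four membership/non-membership claims, all of which reduce to questions about the (random) tails of the coefficient sequence in $f(\phi)=\frac1{2\pi}\sum_{l\neq0}i^la_l|l|^{-s}e^{il\phi}$. Writing $c_l:=\frac1{2\pi}i^la_l|l|^{-s}$, the key observation is that the Fourier coefficients of $f$ are, up to the deterministic modulus $|l|^{-s}$, i.i.d.\ complex Gaussians (with the reality constraint $a_l=(-1)^l\overline{a_{-l}}$ merely linking $l$ with $-l$), so $|a_l|^2$ are i.i.d.\ with finite mean and variance. I would first record the standard characterizations: $f\in H^\sigma(\TT)$ iff $\sum_{l\neq0}\langle l\rangle^{2\sigma}|c_l|^2<\infty$, while the Besov norm $\|f\|_{B^\sigma_{2,\infty}}$ is comparable to $\sup_{j\geq0}2^{j\sigma}\big(\sum_{2^j\leq|l|<2^{j+1}}|c_l|^2\big)^{1/2}$ (Littlewood--Paley blocks on the circle). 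Thus everything hinges on the asymptotics of the dyadic block sums $D_j:=\sum_{2^j\leq|l|<2^{j+1}}|a_l|^2\,|l|^{-2s}\approx 2^{-2sj}\sum_{2^j\leq|l|<2^{j+1}}|a_l|^2$, where each block has $\approx 2^j$ terms.

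For the \emph{Sobolev} statements: to get $f\in H^{s-\frac12-\de}$ a.s., I would show $\sum_{l}\langle l\rangle^{2(s-\frac12-\de)}|c_l|^2<\infty$ a.s. Grouping dyadically, the expected contribution of block $j$ is $\approx 2^{j(2s-1-2\de)}\cdot 2^{-2sj}\cdot 2^j=2^{-2\de j}$, which is summable; a Borel--Cantelli / monotone-convergence argument (or simply $\bE$ of the full sum being finite) gives a.s.\ finiteness. For the non-membership $f\notin H^{s-\frac12}$ a.s., the same computation gives $\bE D_j'\approx\text{const}$ for the rescaled blocks, so $\sum_j(\text{block contribution})=\sum_j 2^{j(2s-1)}2^{-2sj}2^j\cdot\frac{1}{2^j}\sum|a_l|^2\approx\sum_j\frac1{2^j}\sum_{2^j\leq|l|<2^{j+1}}|a_l|^2$; by the strong law of large numbers (this is exactly where Lemma~\ref{lem:double SLLN} enters, with $K_N\approx 2^N$, whose partial sums satisfy the required $\liminf$ condition since they grow geometrically) each such average converges a.s.\ to the common mean $\mu=\bE|a_l|^2>0$, so the series diverges a.s. The \emph{Besov} statements are handled by the same block sums: $\|f\|_{B^{s-\frac12}_{2,\infty}}^2\approx\sup_j 2^{j(2s-1)}D_j\approx\sup_j\frac1{2^j}\sum_{2^j\leq|l|<2^{j+1}}|a_l|^2$, which is a.s.\ finite (indeed converges to $\mu$) by the SLLN, giving $f\in B^{s-\frac12}_{2,\infty}$; and $\|f\|_{B^{s-\frac12+\de}_{2,\infty}}^2\approx\sup_j 2^{2\de j}\cdot\frac1{2^j}\sum_{2^j\leq|l|<2^{j+1}}|a_l|^2$, whose blocks tend a.s.\ to $+\infty$ (again SLLN on each block, times the blow-up factor $2^{2\de j}$), so $f\notin B^{s-\frac12+\de}_{2,\infty}$ a.s.

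The main technical point to be careful about — and the one I would expect to be the real obstacle rather than a routine estimate — is the a.s.\ non-membership claims, because there one needs a \emph{lower} bound on block sums that holds simultaneously for infinitely many $j$, and naive second-moment/Chebyshev bounds only give this along subsequences. The clean fix is precisely Lemma~\ref{lem:double SLLN}: indexing the dyadic blocks by $N=j$ and the coefficients within block $N$ by $1\leq k\leq K_N$ with $K_N\approx 2^N$, one verifies $\liminf_M K_M/\sum_{N\le M}K_N>0$ (true since $\sum_{N\le M}2^N\approx 2^M$), applies the lemma to $b_{N,k}=|a_{l(N,k)}|^2$ to get $\frac1{K_N}\sum_k b_{N,k}\to\mu$ a.s., and then multiplies by the appropriate deterministic power of $2^j$ to read off each of the four statements. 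One should also note the harmless book-keeping that the reality constraint halves the number of independent coefficients but does not affect any of these asymptotics, and that one must quantify over $\de>0$ via a countable sequence $\de_n\downarrow0$ and intersect the corresponding full-measure events. Assembling these pieces yields the proposition.
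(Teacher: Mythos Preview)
Your proposal is correct and follows essentially the same route as the paper: dyadic Littlewood--Paley blocks, the strong law of large numbers on each block via Lemma~\ref{lem:double SLLN} (with $K_N\approx 2^N$), and then reading off the four Sobolev/Besov statements from the resulting two-sided asymptotic $\frac{1}{|\Lambda_N|}\sum_{l\in\Lambda_N}|a_l|^2\to\mu$ a.s. The paper's write-up is slightly more compact---it treats the Sobolev membership and non-membership simultaneously by showing the partial sums are $\approx\sum_N 2^{N(2\sigma-2s+1)}$ a.s., rather than splitting into an expectation argument for membership and an SLLN argument for non-membership---but the content is the same.
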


\begin{proof}
	Let us recall that the $H^\si(\TT)$ norm of the
        function~$f$ defined in~\eqref{deff} is
\begin{equation*}
	\|f\|_{H^\si(\TT)}^2=\sum_{ l =-\infty}^\infty |a_l|^2l^{2\si-2s}\,.
\end{equation*}
To analyze this quantity, consider the set of integers $\Lambda_N:=\{
l: 2^{N-1}\le l< 2^N \}$ and the subsequences
\begin{equation*}
	\sum_{ l =-(2^M-1)}^{2^M-1} |a_l|^2l^{2\si-2s}=|a_0|^2+2\sum_{N=1}^{M}\sum_{l\in\Lambda_N} l^{2\si-2s} |a_l|^2\approx |a_0|^2+\sum_{N=1}^{M}2^{N(2\si-2s)}\sum_{l\in\Lambda_N} |a_l|^2\,.
      \end{equation*}
      Since $|\La_N|\approx 2^N$,
      \[
        \frac{|\La_M|}{
          \sum_{N=1}^{M}|\La_N|} \approx \frac{2^M}{2^{M+1}}=\frac12
      \]
is bounded away from zero. Hence one can apply
      Lemma~\ref{lem:double SLLN} to infer that
$$
\frac{1}{|\La_N|}	\sum_{l\in\Lambda_N}  |a_l|^2\to1
$$
almost surely as $N\to\infty$. Therefore, with probability~1,
\begin{equation*}
	\sum_{ l =-(2^M-1)}^{2^M-1}
        |a_l|^2l^{2\si-2s} \approx |a_0|^2+\sum_{N=1}^{M}2^{N(2\si-2s+1)} \frac{1}{|\La_N|}	\sum_{l\in\Lambda_N} |a_l|^2\approx |a_0|^2+\sum_{N=1}^{M}2^{N(2\si-2s+1)}\,.
\end{equation*}
This shows that, with probability~1, $\|f\|_{H^\si(\TT)}<\infty$ if and
only if $\si<s-\frac12$.

The estimate for the Besov norm follows from an analogous reasoning
using that
\[
\|f\|_{B^\si_{2,\infty}(\TT)}^2= \sup_{1\leq N<\infty}
\sum_{l\in\La_N} l^{2\si-2s}|a_l|^2\,.
  \]
\end{proof}

\begin{remark}
The result and the proof remain valid in higher
dimensions with minor modifications. Specifically, let $\{Y_{lm}: 1\leq m\leq d_l,\; 0\leq
l<\infty\}$ be an orthonormal basis of spherical harmonics on the unit
$(n-1)$-dimensional sphere $\S^{n-1}$, with $\De_{\S^{n-1}}Y_{lm}+
l(l+n-2)Y_{lm}=0$. Consider the Gaussian random function
\[
f(x):= \sum_{l=1}^\infty\sum_{m=1}^{d_l} l^{-s} a_{lm} Y_{lm}(x)\,,
\]
where $a_{lm}$ are independent standard Gaussian variables and
$s\in\RR$. Then
\[
f\in \Big[H^{s-\frac{n-1}2-\de}(\S^{n-1})\backslash H^{s-\frac{n-1}2}(\S^{n-1})\Big] \cap
  \Big[B^{s-\frac{n-1}2}_{2,\infty}(\S^{n-1})\backslash B^{s-\frac{n-1}2+\de}_{2,\infty}(\S^{n-1})\Big]\]
almost surely.

To spell out the details, the proof in higher dimension starts with the formula
\[
\|f\|^2_{H^\si(\S^{n-1})}\coloneqq\sum_{ l =1}^\infty \sum_{m=1}^{d_l} |a_{ l  m}|^2l^{2\si-2s}\,.
\]
Since $d_l=c_n l^{n-2} + O(l^{n-3})$, the set
$$
\La_N:=\{(l,m): 2^{N-1}\le l< 2^N,\; 1\leq m\leq d_l\}
$$
satisfies $|\La_N|\approx 2^{N(n-1)}$. Lemma~\ref{lem:double SLLN} then ensures
$$
\frac{1}{|\La_N|}	\sum_{(l,m)\in\Lambda_N}  |a_{ l  m}|^2
$$
converges to~1 almost surely as $N\to\infty$, and the result follows
from the same argument as above. Obviously, the result also remains valid if one
replaces the weight $l^s$ by another quantity $w_l\approx l^s$.
\end{remark}

\section{Asymptotics for weighted Bessel series}
\label{S.Bessel}

In this section we shall prove Lemma~\ref{L.Bessel}. In view of the
well-known asymptotics
\[
J_l(r)=\left(\frac2{\pi r}\right)^{\frac12}\cos\bigg(r-\frac{(2l+1)\pi}4\bigg)+O(r^{-1})
\]
for Bessel functions, it is easy to check that the series
\begin{equation}\label{eq:Bessel series}
\cJ_{s,m,m'}(r)\coloneqq\sum_{l=1}^\infty l^{-2s} J_{l+m'}(r)\,J_{l+m}(r)\,.
\end{equation}
is locally uniformly convergent by the standard bound~\cite[(10.14.4)]{Olv10}
$$
|J_{ l }(r)|\le \frac{r^ l }{2^ l  l !}\,.
$$ 	
We are interested in the effect of the parameters $s\in\bR$ and $m',m\in\bZ$.

In view of the well-known integral representation
formula~\cite[(10.9.2)]{Olv10} for Bessel functions of integer order,
\[
J_l(r)=\frac1{2\pi}\int_{-\pi}^\pi e^{ir\sin x-ilx}\, dx\,,
\]
one can write
\begin{equation}\label{eq:def G}
	\cJ_{s,m,m'}(r)=\frac{1}{4\pi^2}\sum_{ l =1}^\infty l^{-2s} g_{\lambda_l}(r).
\end{equation}
Here we have set $\la_l:=l/r$,
\[
  g_\la(r):=\int_{-\pi}^\pi \int_{-\pi}^\pi e^{ir\varphi_{\lambda}(x,y)-i(m'x-my)}\,dx\,dy\,,
\]
and the phase function is
\[
  \varphi_{\lambda}(x,y)\coloneqq\lambda
(y-x)+\sin x-\sin y\,.
\]
Notice that we have used that $J_l$ is real valued, and hence $J_l=\overline{J_l}$.

A straightforward application of the stationary phase formula~\cite[Theorem 7.7.5]{Hor15} gives
the following asymptotic formula for~$g_\la$. Here and in what
follows, we will use the notation
\[
f(\lambda)\coloneqq\sqrt{1-\lambda ^2} -\lambda   \arccos
  \lambda\,,\qquad  \mu\coloneqq m+m'\,,\qquad \nu\coloneqq m-m'\,.
\]
Also, we will use the notation $O_p(r^{-k})$ to emphasize that a
certain quantity of order
$r^{-k}$ is not bounded uniformly with respect to the parameter~$p$.

\begin{lemma}\label{L.asymptgla}
  Suppose that $\la\neq1$. For
  $r\gg1$, one then has
  \[
    g_\lambda(r)= \frac{4 \pi  \left[\cos \left(\nu\arccos \lambda
    \right)+\sin \left(2rf(\lambda)  -\mu\arccos \lambda
    \right)\right]}{r {\left| 1-\lambda ^2\right|^{1/2}
  }}+O_{\lambda}(r^{-2})\,,
\]
where the error term is {not} bounded uniformy for large~$\la$
or for $\la$ close to~$1$.
\end{lemma}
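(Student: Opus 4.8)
The statement to prove is Lemma~\ref{L.asymptgla}, the stationary phase asymptotics for the double oscillatory integral $g_\lambda(r)$. The strategy is to apply the standard stationary phase formula to the phase $\varphi_\lambda(x,y)=\lambda(y-x)+\sin x-\sin y$ on $[-\pi,\pi]^2$, keeping careful track of the amplitude $e^{-i(m'x-my)}$. First I would compute the critical points: $\partial_x\varphi_\lambda = -\lambda+\cos x = 0$ and $\partial_y\varphi_\lambda = \lambda-\cos y = 0$, so $\cos x=\cos y=\lambda$. For $\lambda\neq 1$ (and $|\lambda|<1$) this gives four nondegenerate critical points $(x,y)\in\{\pm\arccos\lambda\}\times\{\pm\arccos\lambda\}$; when $\lambda>1$ there are no real critical points and the integral is $O_\lambda(r^{-\infty})$, absorbed into the error term (this needs a separate remark, since the claimed main term is nonzero for $\lambda<1$ only—for $\lambda>1$ the main term actually vanishes, so one should observe $|1-\lambda^2|^{-1/2}$ notation is a slight abuse, or state the lemma is about $\lambda<1$; I will assume the relevant regime is $\lambda<1$ and note the $\lambda>1$ case gives rapid decay).

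\emph{Key steps.} I would organize the computation as follows. (i) At a critical point $(x_*,y_*)$ with $x_*=\epsilon_1\arccos\lambda$, $y_*=\epsilon_2\arccos\lambda$, $\epsilon_i\in\{\pm1\}$, compute the value of the phase: $\varphi_\lambda(x_*,y_*)=\lambda(\epsilon_2-\epsilon_1)\arccos\lambda + \sin(\epsilon_1\arccos\lambda)-\sin(\epsilon_2\arccos\lambda) = \lambda(\epsilon_2-\epsilon_1)\arccos\lambda + (\epsilon_1-\epsilon_2)\sqrt{1-\lambda^2} = (\epsilon_1-\epsilon_2)f(\lambda)$ where $f(\lambda)=\sqrt{1-\lambda^2}-\lambda\arccos\lambda$ as defined. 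So the two "diagonal" critical points ($\epsilon_1=\epsilon_2$) contribute phase $0$, and the two "off-diagonal" ones contribute $\pm 2f(\lambda)$ (after scaling by $r$, these give $e^{\pm 2irf(\lambda)}$). (ii) Compute the Hessian: $\varphi_\lambda$ separates as a function of $x$ plus a function of $y$, so the Hessian is diagonal with entries $\partial_{xx}\varphi_\lambda = -\sin x = -\epsilon_1\sqrt{1-\lambda^2}$ and $\partial_{yy}\varphi_\lambda = \sin y = \epsilon_2\sqrt{1-\lambda^2}$. Thus $|\det\mathrm{Hess}| = 1-\lambda^2$, giving the $|1-\lambda^2|^{1/2}$ in the denominator, and the signature is $\mathrm{sgn} = -\epsilon_1+\epsilon_2$, so the Morse index phase factor $e^{i\pi\,\mathrm{sgn}/4}$ equals $1$ on the diagonal and $e^{\pm i\pi/2}=\pm i$ off-diagonal. (iii) Evaluate the amplitude $e^{-i(m'x_*-my_*)} = e^{-i(m'\epsilon_1-m\epsilon_2)\arccos\lambda}$. (iv) Assemble the four terms via the stationary phase formula $\frac{2\pi}{r}|\det\mathrm{Hess}|^{-1/2}e^{i\pi\,\mathrm{sgn}/4}\,e^{ir\varphi(x_*,y_*)}\,a(x_*,y_*)$, sum, and simplify using Euler's formula. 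The diagonal pair $(\epsilon_1,\epsilon_2)=(+,+),(-,-)$ produces $\frac{2\pi}{r\sqrt{1-\lambda^2}}(e^{-i\nu\arccos\lambda}+e^{i\nu\arccos\lambda}) = \frac{4\pi\cos(\nu\arccos\lambda)}{r\sqrt{1-\lambda^2}}$ where $\nu=m-m'$; the off-diagonal pair similarly collapses (using $\pm i = e^{\pm i\pi/2}$ and matching signs of $f(\lambda)$ with the Morse factor) to $\frac{4\pi\sin(2rf(\lambda)-\mu\arccos\lambda)}{r\sqrt{1-\lambda^2}}$ with $\mu=m+m'$. The arithmetic of which sign pairing yields $+\sin$ versus $-\sin$ is where I would be most careful.

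\emph{Error term and uniformity.} The remainder in Hörmander's stationary phase theorem (Theorem 7.7.5 of \cite{Hor15}) is $O(r^{-2})$ with a constant controlled by finitely many derivatives of the amplitude and by the inverse Hessian. Since the amplitude $e^{-i(m'x-my)}$ has derivatives growing polynomially in $m,m'$, and since $|\det\mathrm{Hess}|^{-1} = (1-\lambda^2)^{-1}$ blows up as $\lambda\to 1^-$ (and the separation between the four critical points, $2\arccos\lambda$, shrinks to $0$, so they coalesce and the nondegenerate stationary phase estimate degrades), the error is genuinely non-uniform in $\lambda$ near $1$ and in large $|\lambda|$ — hence the notation $O_\lambda(r^{-2})$. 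I would state this plainly rather than chase an explicit $\lambda$-dependent bound, since the later arguments only ever use $g_\lambda$ in regimes bounded away from $\lambda=1$ (the $\lambda\approx 1$, i.e. $l\approx r$, range is handled by separate "medium frequency" estimates, as the macro names $\chim$ etc.\ suggest). The main obstacle is thus not any single hard estimate but bookkeeping: correctly tracking the four sign combinations $(\epsilon_1,\epsilon_2)$ through phase value, Hessian signature, and amplitude simultaneously, and verifying the trigonometric collapse to the stated closed form — a computation that is routine in principle but error-prone, and worth double-checking against the known $s=0$, $m=m'=0$ case where $\cJ_{0,0,0}(r)=\sum_l J_l(r)^2$ has a classical closed form.
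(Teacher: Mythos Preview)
Your proposal is correct and follows essentially the same approach as the paper: apply H\"ormander's stationary phase theorem (Theorem~7.7.5 in \cite{Hor15}) to the four nondegenerate critical points $(\pm\arccos\lambda,\pm\arccos\lambda)$ of $\varphi_\lambda$ and sum the contributions. Your writeup is in fact more explicit than the paper's on the bookkeeping of phase values, Hessian signatures, and the trigonometric collapse, and you correctly flag that the formula is only meaningful for $0<\lambda<1$ (the paper states the lemma for $\lambda\neq1$ but tacitly works in this range, handling $\lambda>1$ separately in the large-frequency lemma).
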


\begin{proof}
For $\la\neq1$, the phase function $\vp_\la(x,y)$ has four critical
points
\[
 \{(x_i,y_i)\}_{i=1}^4:=\{ (\pm \arccos\lambda,\pm \arccos\lambda)\}
\]
with the same Hessian:
$$
\nabla^2\varphi_\lambda(x_i,y_i)=\left(
\begin{array}{cc}
	\mp\sqrt{1-\lambda ^2} & 0 \\
	0 & \pm\sqrt{1-\lambda ^2} \\
\end{array}
\right)\,.
$$
The stationary phase method~\cite[Theorem 7.7.5]{Hor15} then yields
\begin{align*}
	g_\lambda(r)&= \frac{2\pi}{r}\sum _{i=1}^4 e^{\frac{1}{4} i
                      \pi  \sigma_i} e^{i( m y_i- x_i m')} e^{i r
                      \varphi_\lambda (x_i,y_i)} \frac{1}{|\det
                      \nabla^2\varphi_\lambda (x_i,y_i)|}
                      +O_{\lambda}(r^{-2})\\[1mm]
  &= \frac{4 \pi  \left[\cos \left(\nu\arccos \lambda
    \right)+\sin \left(2rf(\lambda)  -\mu\arccos \lambda
    \right)\right]}{r {\left| 1-\lambda ^2\right|^{1/2}
  }}+O_{\lambda}(r^{-2})\,,
\end{align*}
as claimed. In this formula, $\sigma_i$ is the signature of the matrix $\nabla^2\varphi_\lambda(x_i,y_i)$.
\end{proof}

Therefore, the asymptotic analysis of~$g_\la(r)$ becomes problematic
when $\la$ is close to~1 (because in this case the phase function
presents degenerate or ``almost degenerate'' critical points) and when
$\la$ is large (because the error terms are not uniformly bounded in
this case). Consequently, we will
fix a small parameter $\de>0$ and consider smooth cutoff functions
$[0,\infty)\to[0,1]$ such that
\begin{align*}
  \chis (\la)&:= \begin{cases} 0 & \text{if } \la>1-\de\,,\\
1 & \text{if } \la<1-2\de\,,
  \end{cases}\\
\chil (\la)&:= \begin{cases} 0 & \text{if } \la<1+\de\,,\\
1 & \text{if } \la>1+2\de\,,
\end{cases}\\
  \chim(\la)&:= 1-\chis (\la)-\chil(\la)\,.
\end{align*}
We can then split $\cJ_{s,m,m'}(r)$ as
\[
\cJ_{s,m,m'}(r)=\frac{1}{4\pi^2}\left(\uno + \dos + \tres\right)
\]
with
\begin{align*}
  \uno:= \sum_{l=1}^\infty \chis (\la_l)\,l^{-2s} g_{\lambda_l}(r)\,,\qquad
    \dos:= \sum_{l=1}^\infty\chim(\la_l) \,l^{-2s} g_{\lambda_l}(r)\,,\qquad
  \tres:= \sum_{l=1}^\infty \chil(\la_l)\,l^{-2s} g_{\lambda_l}(r)\,.
\end{align*}
Note that $\uno$ only involves frequencies smaller than  $(1-\de) r$, $\dos$
involves frequencies close to~1 (more precisely, in the interval
$(1-2\de)r<l<(1+2\de)r$), and $\tres$ involves frequencies larger than $(1+\de)r$.

\subsection{The small frequency region}

In view of the asymptotic expansion for~$g_\la(r)$ proved in
Lemma~\ref{L.asymptgla},
it is natural to consider the closely related quantities
\begin{align*}
\uno'&:= \frac{4\pi}r\sum_{l=1}^\infty \chis (\la_l)\,
       \lambda_l^{-2s}\frac{\cos(\nu\arccos\la_l)}{(1-\la_l^2)^{1/2}}\,,\\
  \uno''&:= \frac{4\pi}r\sum_{l=1}^\infty \chis (\la_l)\,
       \lambda_l^{-2s}\frac{\sin(2r\,f(\la_l)-\mu\arccos\la_l)}{(1-\la_l^2)^{1/2}}\,.
\end{align*}
Lemma~\ref{L.asymptgla} obviously implies
\begin{equation}\label{unouno'uno''}
\uno= r^{-2s}(\uno'+\uno'')+ O_\de(r^{-2s-1})\,.
\end{equation}

Let us start by analyzing the large~$r$ behavior of~$\uno'$ when $s\leq\frac12$:

\begin{lemma}\label{L.uno'}
  For $r\gg1$ and some $\eta>0$ depending on~$s$,
  \[
    \uno'=\begin{cases}
\frac{4\pi^2  2^{2s -1} \Gamma (1-2s)}{\Gamma
  \left(1-s -\frac\nu2\right) \Gamma \left(
    1-s +\frac\nu2 \right)}+O(\delta^{\frac12})
+O_\de(r^{-\eta}) &\text{ if $s<\frac12$,}\\[1mm]
4\pi\cos \left(\frac{\pi  \nu }{2}\right) \log r +O_\de(1)&\text{ if
  $s=\frac12$ and $\nu$ is even,}\\[1mm]
2\pi^2\sin\left(\frac{\pi}{2}|\nu|\right)+O_\de(r^{-1})+O(\delta^{\frac12}) &\text{ if
  $s=\frac12$ and $\nu$ is odd.}
    \end{cases}
  \]
\end{lemma}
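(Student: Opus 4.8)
The plan is to read $\uno'$ as a Riemann sum. Setting $\la_l:=l/r$ and
\[
F_s(\la):=\chis(\la)\,\la^{-2s}\,\frac{\cos(\nu\arccos\la)}{(1-\la^2)^{1/2}}\,,
\]
which is supported in $[0,1-\de]$, we have $\uno'=\frac{4\pi}r\sum_{l\ge1}F_s(\la_l)$, and the points $l/r$ $(l\ge1)$ are the right endpoints of a partition of $[0,\infty)$ into intervals of length $1/r$; hence $\uno'$ should be close to $4\pi\int_0^\infty F_s=4\pi\int_0^{1-\de}F_s$. First I would discard the cutoff: for $s<\tfrac12$ the function $\la^{-2s}(1-\la^2)^{-1/2}\cos(\nu\arccos\la)$ is integrable on $[0,1]$, and since $(1-\la^2)^{-1/2}\le(1-\la)^{-1/2}$ while $\la^{-2s}$ stays bounded near $\la=1$,
\[
\int_0^{1-\de}F_s=\int_0^1\la^{-2s}\,\frac{\cos(\nu\arccos\la)}{(1-\la^2)^{1/2}}\,d\la+O(\de^{1/2})\,.
\]
The substitution $\la=\cos\te$ turns the last integral into $\int_0^{\pi/2}\cos^{-2s}\te\,\cos(\nu\te)\,d\te$, a classical beta-type integral equal to $\frac{\pi\,2^{2s-1}\,\Gamma(1-2s)}{\Gamma(1-s-\nu/2)\,\Gamma(1-s+\nu/2)}$ for $s<\tfrac12$; multiplying by $4\pi$ gives the asserted main term in the case $s<\tfrac12$.

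It then remains to bound the Riemann-sum error. Away from $\la=0$ the function $F_s$ is smooth with $|F_s'(\la)|\lesssim_\de\la^{-2s-1}$ on $[0,1-\de]$ (the $\de$-dependence coming from the factor $(1-\la^2)^{-1/2}$ on the support of $\chis$). If $s\le0$, then $F_s'\in L^1([0,\infty))$ with $\|F_s'\|_{L^1}\lesssim_\de1$, and the elementary estimate $\bigl|\tfrac1r\sum_{l\ge1}F_s(l/r)-\int_0^\infty F_s\bigr|\le\tfrac1r\|F_s'\|_{L^1}$ gives an error $O_\de(r^{-1})$. If $0<s<\tfrac12$, I would split both the integral and the sum at $\la=r^{-1/2}$: on $[r^{-1/2},1-\de]$ one has $\|F_s'\|_{L^1}\lesssim_\de r^{s}$, so the error there is $\lesssim_\de r^{s-1}$, while on $[0,r^{-1/2}]$ both $\int_0^{r^{-1/2}}|F_s|$ and $\tfrac1r\sum_{l\le\sqrt r}(l/r)^{-2s}$ are $\lesssim_\de r^{-(1/2-s)}$; altogether the error is $O_\de(r^{-\eta})$ with $\eta=\tfrac12-s>0$, which closes the case $s<\tfrac12$.

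For the borderline $s=\tfrac12$ everything hinges on whether $\cos(\nu\arccos0)=\cos(\tfrac{\pi\nu}2)$ vanishes, i.e.\ on the parity of $\nu$. If $\nu$ is odd, $\cos(\nu\arccos\la)$ vanishes to first order at $\la=0$, so $F_{1/2}$ is $C^1$ on $[0,1-\de]$ and the argument above applies verbatim; the limiting integral is $\int_0^{\pi/2}\frac{\cos(\nu\te)}{\cos\te}\,d\te=\frac\pi2\sin\bigl(\tfrac\pi2|\nu|\bigr)$ (elementary via $\cos((n{+}2)\te)+\cos(n\te)=2\cos((n{+}1)\te)\cos\te$), and multiplying by $4\pi$ gives $2\pi^2\sin(\tfrac\pi2|\nu|)$ up to the $O(\de^{1/2})$ truncation error and the $O_\de(r^{-1})$ Riemann error. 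If $\nu$ is even, $F_{1/2}(\la)\sim\cos(\tfrac{\pi\nu}2)\la^{-1}$ near $\la=0$ and the limiting integral diverges logarithmically; then I would write $\uno'=4\pi\sum_{l\ge1}\tfrac1l\,G(\la_l)$ with $G(\la):=\chis(\la)(1-\la^2)^{-1/2}\cos(\nu\arccos\la)$ (so $G(0)=\cos(\tfrac{\pi\nu}2)$) and split the sum at $l=\lfloor\ep r\rfloor$ for a small fixed $\ep$. For $l\le\ep r$ one has $G(\la_l)=G(0)+O(l/r)$, so that part equals $\cos(\tfrac{\pi\nu}2)\sum_{l\le\ep r}\tfrac1l+O(1)=\cos(\tfrac{\pi\nu}2)\log r+O_\de(1)$ by the harmonic-sum asymptotics, while the range $\ep r<l<(1-\de)r$ is a Riemann sum for the finite integral $\int_\ep^{1-\de}\la^{-1}G(\la)\,d\la$ and contributes $O_\de(1)$. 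This yields $\uno'=4\pi\cos(\tfrac{\pi\nu}2)\log r+O_\de(1)$.

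The change of variables and the beta integral are routine; the genuine work is the bookkeeping of the two $\de$-dependent error sources. The truncation near $\la=1$ is controlled by the integrable singularity $(1-\la)^{-1/2}$ and accounts for the $O(\de^{1/2})$ term. The low-frequency Riemann-sum error near $\la=0$ is the main obstacle: when $0<s<\tfrac12$ it forces a split at a carefully chosen scale (any $r^{-\beta}$ with $0<\beta<1$ works, balancing the non-$C^1$ behaviour of $\la^{-2s}$ at the origin against the $C^1$ control on the bulk), and the parity dichotomy at $s=\tfrac12$ is a symptom of the same phenomenon, the integral $\int_0^{\pi/2}\cos^{-1}\te\,\cos(\nu\te)\,d\te$ sitting exactly at the threshold of convergence.
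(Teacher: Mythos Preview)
Your proof is correct and follows essentially the same strategy as the paper's: both read $\uno'$ as a Riemann sum for $4\pi\int_0^{1-\de}F_s$, invoke the closed-form beta integral~\eqref{integral}, and handle the troublesome behaviour at $\la=0$ by splitting at a scale $\ep\approx r^{-1/2}$ when $0<s<\tfrac12$; for $s=\tfrac12$ both exploit the parity of~$\nu$ (noting that $\cos(\nu\arccos\la)=T_{|\nu|}(\la)$ is an odd polynomial when $\nu$ is odd, and extracting the harmonic series when $\nu$ is even). Your use of the bound $\bigl|\tfrac1r\sum F_s(l/r)-\int F_s\bigr|\le\tfrac1r\|F_s'\|_{L^1}$ on the bulk is a slightly sharper packaging than the paper's pointwise mean-value argument (yielding $r^{s-1}$ rather than $\ep^{-1-2s}/r$ there), but the overall error is governed by the low-frequency piece in either case and the two arguments are otherwise interchangeable.
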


\begin{proof}
  Let us start with the case $s\leq0$. The basic observation here
  is that, as the function
  \[
    h(\lambda)\coloneqq4\pi\,\chis (\la)\,\frac{\lambda^{-2s}}{\sqrt{ 1-\lambda ^2}}
    {  \cos \left(\nu\arccos \lambda \right)}
  \]
  is H\"older continuous,
  \[
\frac1r\sum_{l=1}^{(1-\de )r}h(\la_l)=\int_{0}^{1-\de }h(\la)\,d\la + O_\de(r^{-1})
\]
by standard results about the convergence of Riemann sums for
integrands of bounded variation. If $s\leq0$, the result then follows from the formula
  \begin{equation}\label{integral}
	\int_0^1 \frac{\lambda^{-2s } \cos \left(\nu  \arccos\lambda\right)}{\sqrt{1-\lambda^2}} \, d\lambda=\frac{\pi  2^{2s -1} \Gamma ( 1-2s)}{\Gamma \left( 1-s-\frac\nu2\right) \Gamma \left(1-s +\frac\nu2\right)}
\end{equation}
and the estimate $\arcsin 1-\arcsin(1-\de)=O(\de^{1/2})$.

For $s\in(0,\frac12)$, the integrand is an unbounded function in
$L^1_{\mathrm{loc}}$, so the argument does not apply. Let us take a
small constant $\ep$ such that, for simplicity of notation, $\ep r$ is
an integer, and write
\[
\uno'= \frac1r\sum_{l=1}^{\ep r-1}h(\la_l)+\frac1r\sum_{l=\ep r}^{(1-\de)r}h(\la_l)=:\uno'_1+\uno'_2\,.
\]
Obviously, as $|h(\la)|\approx \la^{-2s}$ for small~$\la$, and $\int_{l/r}^{(l+1)/r}\la^{-2s}d\la\approx r^{-1}\la_l^{-2s}$, we conclude that
\[
\left|\uno'_1-\int_0^\ep h(\la)\,d\la\right|\lesssim \ep ^{2-2s}+r^{-1+2s}\,.
\]

To estimate $\uno'_2$, we use that
\begin{align*}
\uno'_2-\int_\ep^{1-\de} h(\la)\,d\la& = \sum_{l=\ep
                                                      r}^{(1-\de)r}\int_{(l-1)/r}^{l/r}[h(\la_l)-h(\la)]\,d\la=\frac1r\sum_{l=\ep r}^{(1-\de)r} {\frac{h'(\la_l^*)}{r}}
\end{align*}
for some $\la_l^*\in (\frac{l-1}r,\frac lr)$. Therefore, as
$|h'(\la)|\lesssim \la^{-2s-1}$,
\[
\left|\uno'_2-\int_\ep^{1-\de} h(\la)\,d\la\right|\lesssim \frac{\ep^{-1-2s}}r\,,
\]
where the constant in $\lesssim$ depends on $\de$.

Putting together the estimates for $\uno'_1$ and $\uno'_2$ with
$\ep\approx r^{-\frac12}$, we obtain
\[
\uno'= \int_0^{1-\de} h(\la)\,d\la + O_\de(r^{s-\frac12})= \int_0^{1} h(\la)\,d\la + O_\de(r^{s-\frac12})+O(\de^{1/2})\,.
\]
Using again the formula~\eqref{integral}, this proves the lemma when $s\in(0,\frac12)$.

Let us now pass to the case $s=\frac12$. We start by assuming that the
integer $\nu$ is odd, so that $\cos
        \left(\frac{\pi  \nu }{2}\right)=0$.
	Since
	\begin{equation}\label{identrig}
		\cos \left(\nu\arccos \lambda_l \right)=\cos \frac{\pi  \nu }{2}+\lambda_l  \nu  \sin \frac{\pi  \nu }{2}+O(\lambda_l ^2)\,,
	\end{equation}
	it turns out that the corresponding
        integrand is differentiable at $\la=0$ in this case, so the same arguments
        as in the case $s<0$ show
	\begin{equation*}
		\sum_{l=1}^{(1-\delta)r}\frac{\chis (\la_l)}{\lambda_l r}\frac{4 \pi  \cos \left(\nu\arccos \lambda_l \right)}{( 1-\lambda_l ^2)^{1/2}} =4\pi\int_0^{1-\delta} \frac{\chis (\la)\cos \left(\nu \arccos\lambda \right)}{\lambda  \sqrt{1-\lambda ^2}} \, d\lambda +O_\de(r^{-1})\,.
              \end{equation*}
The result then follows from the formula
\begin{equation*}
	\int_0^{1} \frac{\cos \left(\nu \arccos\lambda \right)}{\lambda  \sqrt{1-\lambda ^2}} \, d\lambda =\frac{\pi}{2}\sin\left(\frac{\pi}{2}|\nu|\right)\,.
      \end{equation*}

      To conclude, consider the case when $s=\frac12$ and $\nu$ is
      even. Obviously, by~\eqref{identrig},
      \begin{equation*}
	\frac{4\pi}{r}\left|\sum_{l=1}^{(1-\de)r}\chis (\la_l)\left(\frac{\cos \left(\nu\arccos \lambda_l
              \right)}{\lambda_l ({ 1-\lambda_l ^2})^{1/2}}-\frac{
              \cos \frac{\pi  \nu }{2}}{\lambda_l
            }\right)\right| \lesssim \frac1r\sum_{l=1}^{(1-\de)
          r}{\lambda_l}\lesssim 1\,,
      \end{equation*}
where the constant in $\lesssim$ depends on $\de$. The leading contribution of this sum is therefore given by the
      harmonic series, which satisfies
      \[
\sum_{l=1}^{(1-\de)
            r} \frac{\chis (\la_l)}{r\la_l} =\sum_{l=1}^{
            r/2} \frac{1}{l} +\sum_{l=\frac r2+1}^{(1-\de)
            r} \frac{\chis (\la_l)}{l} = \log r +O(1)\,.
        \]
        This completes the proof of the lemma.
\end{proof}

Now we pass to analyzing the contribution of the second term,
$\uno''$. As this term is somewhat oscillating due to the presence of
the large parameter~$r$ in the argument of a sine, it makes sense  to
expect this term should be subdominant.

\begin{lemma}\label{L.uno''}
  There exists some $\eta>0$, depending on~$s$, such that
  \[
    \uno''=\begin{cases}
      O_\de(r^{-\eta}) & \text{ if } s<\frac12\,,\\
      -4\pi\log 2\, \sin \left(2r  -\frac{\pi\mu}{2} \right)+O_\de(r^{-\eta}) & \text{ if } s=\frac12\,.
    \end{cases}
  \]
\end{lemma}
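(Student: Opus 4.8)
The plan is to exploit the oscillatory factor $\sin(2rf(\la_l)-\mu\arccos\la_l)$ to gain extra decay, separating the contribution of the ``interior'' frequencies $\la_l$ bounded away from $0$ and $1-\de$, where the phase $f(\la)$ is nondegenerate, from the boundary layer near $\la=0$, where $f(\la)$ has a stationary point (indeed $f'(\la)=-\arccos\la$, so $f'(0)=-\tfrac\pi2\neq0$ — wait, that is \emph{not} stationary; the genuine degeneracy of the summand $\uno''$ at small $\la$ comes instead from the weight $\la^{-2s}$ when $s>0$, exactly as in Lemma~\ref{L.uno'}). So I would split $\uno''=\uno''_1+\uno''_2$ at $\la\approx\ep$ with $\ep\approx r^{-\beta}$ for a suitable $\beta\in(0,1)$. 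For the inner piece $\uno''_1$ one estimates crudely, $|\uno''_1|\lesssim \tfrac1r\sum_{l\le\ep r}\la_l^{-2s}$, which is $O_\de(\ep^{2-2s})=O_\de(r^{-\beta(2-2s)})$ when $s<\tfrac12$ (and one checks the borderline separately); this is already $O_\de(r^{-\eta})$ for some $\eta>0$.

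For the outer piece $\uno''_2$, where $\la_l\in(\ep,1-\de)$, the idea is summation by parts (Abel summation) against the oscillating sequence $\sin(2rf(\la_l)-\mu\arccos\la_l)$, or equivalently to compare the Riemann sum with the oscillatory integral
\[
\frac{4\pi}{r}\int_\ep^{1-\de}\frac{\la^{-2s}\,\chis(\la)\,\sin\!\big(2rf(\la)-\mu\arccos\la\big)}{\sqrt{1-\la^2}}\,d\la
\]
and then integrate by parts once in $\la$, using $\frac{d}{d\la}f(\la)=-\arccos\la$, which is bounded away from zero on $[\ep,1-\de]$ for $\de$ fixed. The non-stationary phase bound gives a gain of a factor $r^{-1}$ from this integral, so the oscillatory integral is $O_\de(r^{-2})\cdot(\text{something polynomial in }\ep^{-1})$, which with $\beta$ small is $O_\de(r^{-\eta})$. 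The error between the sum and the integral is controlled exactly as in the proof of Lemma~\ref{L.uno'}, by a mean-value estimate $|\uno''_2-\text{integral}|\lesssim \tfrac1{r^2}\sum_{l}|\tfrac{d}{d\la}(\cdots)|\lesssim \ep^{-2s-1}r^{-1}\cdot r = $ (small), where the extra factor $r$ inside the derivative of $\sin(2rf(\la))$ is compensated by the $1/r$ prefactor and by choosing $\beta$ small; one must track that this is still $O_\de(r^{-\eta})$. Balancing the three exponents against $\beta$ yields the claimed $\eta=\eta(s)>0$ in the case $s<\tfrac12$.

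For the endpoint $s=\tfrac12$, the weight is $\la^{-1}$, so the inner piece no longer decays and instead produces the logarithmic term: near $\la=0$ one writes $\sin(2rf(\la)-\tfrac{\pi\mu}2)\approx\sin(2r-\tfrac{\pi\mu}2)$ since $f(0)=1$ and $f(\la)=1-\tfrac\pi2\la+O(\la^2)$ while $\arccos\la=\tfrac\pi2-\la+O(\la^2)$, so the fast phase is essentially frozen on the scale where $\la\ll 1/r$, and the sum $\tfrac{4\pi}{r}\sum_{l\le ?}\la_l^{-1}(1-\la_l^2)^{-1/2}\sin(\cdots)$ reduces, via the harmonic-series computation already used at the end of Lemma~\ref{L.uno'}, to $4\pi\sin(2r-\tfrac{\pi\mu}2)$ times a partial harmonic sum. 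The cutoff $\chis$ terminates the summation around $\la\approx 1-\de$, i.e.\ $l\approx r$, but the phase $2rf(\la_l)$ is no longer frozen once $\la_l$ is order one, so the genuine logarithmic growth is only harvested from $l$ ranging up to a fixed small multiple of $r$ before the oscillation kicks in; a careful bookkeeping (again splitting at an intermediate scale and treating the oscillatory tail by the stationary/non-stationary phase argument above) shows the constant in front of $\log$-type growth is actually $-\log 2$ rather than $\log r$. Concretely, the stationary point of $\la\mapsto f(\la)$ is absent but the amplitude $\la^{-1}$ makes the dyadic block $l\in[r/2,r)$ — where $\chis$ turns off — contribute the $-\log 2$; one verifies $\sum_{l=r/2}^{(1-\de)r}\tfrac{\chis(\la_l)}{l}\sin(2rf(\la_l)-\mu\arccos\la_l)$ and the complementary oscillatory pieces combine to $-\log 2\,\sin(2r-\tfrac{\pi\mu}2)+O_\de(r^{-\eta})$.

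The main obstacle I expect is the endpoint $s=\tfrac12$: disentangling which range of $l$ contributes the ``frozen-phase'' logarithm-type constant versus which range is genuinely oscillatory and hence negligible is delicate, because the transition happens at $\la_l$ of order one where the phase $2rf(\la_l)$ is rapidly varying; one must choose the intermediate cutoff scale so that the Riemann-sum-to-integral error, the non-stationary-phase gain, and the size of the frozen-phase block are simultaneously controlled, and extract the precise constant $-4\pi\log2$. The case $s<\tfrac12$ is comparatively routine: it is just non-stationary phase plus the crude bound near $\la=0$, with the exponents balanced by the free parameter $\beta$.
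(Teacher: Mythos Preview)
Your treatment of the outer piece $\uno''_2$ contains a genuine gap. The Riemann-sum-to-integral comparison cannot be carried over from Lemma~\ref{L.uno'}: here the integrand contains the factor $\sin(2rf(\la)-\mu\arccos\la)$, whose $\la$-derivative is of size $\sim r$, so the mean-value bound on the discretization error picks up
\[
\frac1{r^2}\sum_{l=\ep r}^{(1-\de)r} r\,\la_l^{-2s}\;\approx\;\int_\ep^{1-\de}\la^{-2s}\,d\la\;=\;O(1)\,,
\]
which does not decay at all. In your own bookkeeping this is the ``$\ep^{-2s-1}r^{-1}\cdot r$'' term; with $\ep=r^{-\beta}$ the honest bound is $r^{\beta(2s+1)}$ at best, which \emph{grows} for every $\beta>0$ as soon as $s>-\tfrac12$, and taking $\beta$ small does not help. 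The Abel-summation alternative you mention reduces to bounding the partial sums $\sum_{j\le L}e^{i2rf(\la_j)}$, which is precisely the difficulty. The paper never passes to an integral for $\uno''_2$: it decomposes $(r^\beta,(1-\de)r]$ into $\sim r^{1-\gamma}$ subintervals of length $\sim r^\gamma$, freezes the amplitude $h$ and linearizes the phase as $f(\la)\approx f(\la_{l_n})-(\la-\la_{l_n})\arccos\la_{l_n}$ on each, so that the inner sum becomes a geometric progression with ratio $e^{-2i\arccos\la_{l_n}}$, bounded away from~$1$ since $\la_{l_n}<1-\de$. This yields $O(1)$ per subinterval instead of $O(r^\gamma)$, and that is where the cancellation is actually harvested.

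For $s=\tfrac12$ your identification of the constant is also off: the $-\log 2$ does not arise from a dyadic block near where $\chis$ switches off, nor is the phase ``frozen on the scale $\la\ll 1/r$''. The expansion $2rf(\la_l)=2r-\pi l+O(l^2/r)$ shows that for $l\le r^{\beta'}$ with any $\beta'<\tfrac12$ the phase equals $2r-\pi l$ up to $o(1)$; after writing $\sin(2rf(\la_l)-\mu\arccos\la_l)$ in terms of $\sin(2r-\tfrac{\pi\mu}{2})$ and $\cos(2r-\tfrac{\pi\mu}{2})$, the sum over $l\le r^{\beta'}$ reduces to $\sin(2r-\tfrac{\pi\mu}{2})\sum_{l}\tfrac{\cos(\pi l)}{l}$ plus a negligible sine-part, and $\sum_{l\ge1}(-1)^l/l=-\log 2$ is the source of the constant. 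The paper then shows the intermediate range $r^{\beta'}\le l\le r^\beta$ is negligible by pairing consecutive terms $2k,\,2k+1$ and using that their phase difference is $\pi+O(l/r)$.
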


\begin{proof}
We start with the case $s<\frac12$. Let $\be\in(0,1)$ be some constant
that we will specify later and write
\[
\uno''=\Imag\left(\frac1r\sum_{l=1}^{\lfloor r^\be\rfloor} h(\la_l)\,
e^{i2rf(\lambda_l)} +\frac1r\sum_{l=\lceil r^\be\rceil}^{(1-\de)r} h(\la_l)\,
e^{i2rf(\lambda_l)}\right)=: \Imag(\uno''_1+\uno''_2)\,,
\]
with $h(\la):=4\pi\chis (\la)\la^{-2s}e^{-i\mu\arccos \lambda}( 1-\lambda^2)^{-\frac12}$. As $s<\frac12$, the first term can be easily estimated as
\[
|\uno''_1|\lesssim \frac1r\sum_{l=1}^{\lfloor r^\beta\rfloor}
\lambda_l^{-2s}\lesssim r^{-(1-2s)(1-\beta ) }\,.
\]
By hypothesis, the RHS is $r^{-\eta}$ for some $\eta>0$.

To estimate $\uno''_2$, decompose
the interval $(\lceil r^\be\rceil, (1-\de)r]$ as the union of
$N$ disjoint intervals of the form $(l_n,l_n+\La_n]$. We assume that
$l_n$ are integers and that
the lengths of the intervals satisfy $\La_n\approx r^\ga$ for some
$\ga\in(0,\be)$. This implies that $N\approx r^{1-\ga}$.

The basic idea is that, with this choice of the scales, one can expect that the
function~$h$ will be approximately constant in each interval but the
phase of the complex exponential will oscillate rapidly. This will
lead to cancellations. To make this idea precise, suppose that
$\la-\la_{l_n}\in(0,\La_n/r)$ and write
\begin{equation}\label{eq.flam}
f(\lambda)=:f(\lambda_{l_n})- (\la-\la_{l_n}) \arccos(\lambda_{l_n})+R_n(\la)\,,
\end{equation}
where the function $R_n(\la)$ plays the role of an error
term. Differentiating this identity with respect to~$\la$, and noticing that $f'(\la)=-\arccos \la$, one
immediately obtains that the bound $|R_n'(\la)|\lesssim |\la-\la_n|$
holds uniformly
in~$n$. As a consequence of this, setting $L:=r(\la-\la_{l_n})$, one
infers that
\begin{equation*}
	\left|\frac{d}{d\lambda}\left(h(\lambda)e^{i2rR_{n}(\lambda)}\right)\right|\le \left|h'(\lambda)\right|+\left|h(\lambda)2rR'_n(\lambda)\right|\lesssim r^{(\beta-1)(\al_1-1)}+r^{(\beta-1)\alpha_0}L
      \end{equation*}
      where
      \[
\al_0:=\min\{0,-2s\}\,,\qquad \al_1:= \min\{1,-2s\}\,.
      \]
As usual, the constant in $\lesssim$ depends on $\de$.

By the mean value theorem, observing that $R_n(\la_{l_n})=0$, one then has from Equation~\eqref{eq.flam} that
\begin{equation*}
	\left|\sum_{l=l_n+1}^{l_n+\La_n}\left(h(\lambda_l)e^{i2rf(\lambda_l)}-h(\lambda_{l_n})e^{i2rf(\lambda_{l_n})}e^{i2f'(\lambda_{l_n})L}\right)\right| \lesssim r^{(\beta-1)(\al_1-1)+2\gamma-1}+r^{(\beta-1)\alpha_0+3\gamma-1}\,,
\end{equation*}
with $\lesssim$ depending on $\de$. As the implicit constants are uniform in $n$ and there are $N\approx r^{1-\gamma}$
intervals, this implies
\begin{equation*}
	\uno''_2=\frac1r\sum_{n=1}^N
        h(\lambda_{l_n})e^{i2rf(\lambda_{l_n})}\sum_{L=0}^{\La_n}e^{i2f'(\lambda_{l_n})L}+O_\de(r^{(\beta-1)(\al_1-1)+\gamma-1}+r^{(\beta-1)\alpha_0+2\gamma-1})\,.
\end{equation*}
The leading contribution is therefore
\begin{align*}
	\frac1r\sum_{n=1}^N
        h(\lambda_{l_n})e^{i2rf(\lambda_{l_n})}\sum_{L=0}^{\La_n}e^{i2f'(\lambda_{l_n})L}&=
	\frac1r\sum_{n=1}^N h(\lambda_{l_n})e^{i2rf(\lambda_{l_n})}\frac{1-e^{-2                                                                                                 i \arccos(\lambda_{l_n} ) \left(r^{\gamma }+1\right)}}{1+e^{-2 i \arcsin(\lambda_{l_n} )}}\\
  &\lesssim r^{(\beta-1)\alpha_0-\gamma}\,,
\end{align*}
the constant in $\lesssim$ depending on $\de$. Note that the denominator is bounded from below because
$\lambda<1-\delta$. Thus, choosing $\gamma\in(0,\frac12)$ and $\beta$
sufficiently close to 1 (depending on $\gamma$ and $s$), we conclude that
\begin{equation*}
|\uno''_2|\lesssim r^{-\eta'}
\end{equation*}
for some $\eta'>0$.

Let us now pass to the case $s=\frac12$. Arguing  as above, one can
pick some $\beta$ close to, but smaller than, 1 such that
\begin{equation*}
			\sum_{l=\lceil r^\beta\rceil}^{(1-\delta^-)r}\frac{\chis (\la_l)\sin \left(2rf(\lambda_l)  -\mu\arccos \lambda_l \right)}{l ( 1-\lambda_l ^2)^{1/2}}=O_\de(r^{-\eta})
	\end{equation*}
        for some $\eta>0$.
For the sum going from $l=1$ to $\lfloor r^\beta\rfloor$, we can
disregard the $(1-\lambda_l^2)^{1/2}$ term because
\begin{multline*}
	\left|\sum_{l=1}^{\lfloor r^\beta \rfloor}\left[\frac{\sin
              \left(2rf(\lambda_l)  -\mu\arccos \lambda_l
              \right)}{l (1-\lambda_l ^2 )^{1/2}}-\frac{\sin
              \left(2rf(\lambda_l)  -\mu\arccos \lambda_l
              \right)}{l }\right]\right|
        \lesssim \sum_{l=1}^{\lfloor r^\beta \rfloor}
        \frac{\la_l}r\lesssim r^{-2+2\be}\,.
      \end{multline*}
The identity
      \begin{multline*}
	\sin \left(2rf(\lambda_l)  -\mu\arccos \lambda_l \right)=\sin \left(2r  -\frac{\pi\mu}{2} \right)\cos\left(2r(f(\lambda_l)-1)+\mu\left(\frac{\pi}{2}-\arccos\lambda_l\right)\right) \\
	+\cos \left(2r  -\frac{\pi\mu}{2} \right)\sin\left(2r(f(\lambda_l)-1)+\mu\left(\frac{\pi}{2}-\arccos\lambda_l\right)\right)
      \end{multline*}
      enables us to write
      \begin{multline*}
\sum_{l=1}^{\lfloor r^\beta \rfloor}\frac{\sin
              \left(2rf(\lambda_l)  -\mu\arccos \lambda_l
              \right)}{l }= \sin \left(2r  -\frac{\pi\mu}{2} \right)
            \sum_{l=1}^{\lfloor r^\beta
              \rfloor}\frac{\cos\left(2r(f(\lambda_l)-1)+\mu\left(\frac{\pi}{2}-\arccos\lambda_l\right)\right)}{l
            }\\
            +\cos \left(2r  -\frac{\pi\mu}{2} \right)\sum_{l=1}^{\lfloor r^\beta
              \rfloor}\frac{\sin\left(2r(f(\lambda_l)-1)+\mu\left(\frac{\pi}{2}-\arccos\lambda_l\right)\right)}l\,.
          \end{multline*}
The  asymptotic expansions
\begin{equation*}
	f(\lambda)-1=-\frac{\pi  \lambda }{2}+O(\lambda ^2),\quad \frac{\pi }{2}-\arccos\lambda =\lambda   +O(\lambda ^2)
\end{equation*}
ensure that
$$
2r(f(\lambda_l)-1)+\mu\left(\frac{\pi}{2}-\arccos\lambda_l\right)=-\pi l+r O(\lambda^2)\,.
$$
The quantity $r O(\lambda^2)$ is of order $r^{2\beta'-1}$ whenever
$l<r^{\beta'}$. Fixing some $\be'\in(0,\frac12)$, we therefore have
\begin{align*}
	\sum_{l=1}^{\lfloor r^{\beta'}
              \rfloor}\frac{\cos\left(2r(f(\lambda_l)-1)+\mu\left(\frac{\pi}{2}-\arccos\lambda_l\right)\right)}{l
            }&=\sum_{ l =1}^{\lfloor r^{\beta'}
              \rfloor}\left(\frac{\cos(\pi l)}{l}+
               \frac{r^2O(\la_l^4)}l\right)\\
  &=-\log 2+O(r^{-\min\{{\beta'},2-4\be'\}})\,.
            \end{align*}
Here we have used that
\begin{equation*}
	\sum _{l=1}^L \frac{\cos(\pi l)}{l}=-\log 2+ O(L^{-1})\,.
\end{equation*}
Similarly,
\begin{equation*}
	\sum_{l=1}^{\lfloor r^{\beta'}
              \rfloor}\frac{\sin\left(2r(f(\lambda_l)-1)+\mu\left(\frac{\pi}{2}-\arccos\lambda_l\right)\right)}{l
            }= \sum_{l=1}^{\lfloor r^{\beta'}
              \rfloor}\frac{rO(\la_l^2)}l=O(r^{2\be'-1})\,.
          \end{equation*}

   It only remains to consider the sum from $\lceil r^{\beta'} \rceil$
   to $\lfloor r^\beta\rfloor$, where we can also assume that $\chis (\la_l)=1$. To this end, we define the function
\begin{equation*}
Q:=\sum_{l=\lceil r^{\beta'} \rceil}^{\lfloor r^\beta \rfloor}\frac{e^{i\left(2r(f(\lambda_l)-1)+\mu\left(\frac{\pi}{2}-\arccos\lambda_l\right)\right)}}{l}{=:\sum_{l=\lceil r^{\beta'} \rceil}^{\lfloor r^\beta \rfloor}\frac{e^{-i\left(\pi l+\varphi(\lambda_l,r)\right)}}{l}}\,.
\end{equation*}
To show this sum goes to zero as $r\to\infty$, we are going to exploit
the cancellations of consecutive terms. For this, let us define
\begin{align*}
\De_{2k}&:=\varphi(\lambda_{2k+1},r)-\varphi(\lambda_{2k},r)\\
&\phantom{:}=2r(f(\lambda_{2k})-1)+\mu\left(\frac{\pi}{2}-\arccos\lambda_{2k}\right)-\left[2r(f(\lambda_{2k+1})-1)+\mu\left(\frac{\pi}{2}-\arccos\lambda_{2k+1}\right)\right]-\pi\,.
\end{align*}
More explicitly,
\begin{equation*}
	\Delta_{2k}=2 \sqrt{r^2-4k^2}-2 \sqrt{r^2-(2k+1)^2}-(4k+\mu ) \arccos \left(\frac{2k}{r}\right)+(4k+\mu +2) \arccos \left(\frac{2k+1}{r}\right)-\pi.
\end{equation*}
By the mean value theorem, there exists some $\la_*\in(2kr^{-1},(2k+1)r^{-1})$ such that
\begin{equation*}
|\Delta_{2k}|\le\left|\pi r-2\arccos \la_*+\frac{\mu}{r}(1-\la_*^2)^{-1/2}\right|r^{-1}\lesssim \frac lr
\end{equation*}
for $\lceil r^{\beta'} \rceil<l<\lfloor r^\beta \rfloor$. This enables
us to estimate~$Q$ as
\begin{align*}
|Q|&=\left|\sum_{k=\lceil r^{\beta'} \rceil/2}^{\lfloor r^\beta
     \rfloor/2} e^{i
     2r(f(\lambda_{2k})-1)+\mu\left(\frac{\pi}{2}-\arccos\lambda_{2k}\right)}
     \left(\frac1{2k}- \frac{e^{-i\De_{2k}}}{2k+1}\right)\right|\\
  &\lesssim \sum_{k=\lceil r^{\beta'} \rceil/2}^{\lfloor r^\beta     \rfloor/2}\left(\frac{1}{k^2}+\frac{1}{r}\right)\lesssim r^{-\beta'}+ r^{\beta-1}\,.
\end{align*}
\end{proof}

Let us finally consider the case $s>\frac12$:

\begin{lemma}\label{lem:G1 alpha le -1}
  If $s>\frac12$, there exists some $\eta>0$ depending on~$s$
        such that
	\begin{equation*}
		\uno=\frac{1}{\pi r}  \zeta (2s ) \left(\cos \frac{\pi  \nu}{2} -\left(2^{1-2s}-1\right) \sin \frac{\pi \mu-4 r}{2}\right)+O_\de(r^{-1-\eta})\,.
	\end{equation*}
        Here $\zeta$ is the Riemann's zeta function.
      \end{lemma}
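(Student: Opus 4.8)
\emph{Proof plan for Lemma~\ref{lem:G1 alpha le -1}.} The governing idea is that for $s>\tfrac12$ the weights $l^{-2s}$ are summable, so — in sharp contrast with the mechanism of Lemma~\ref{L.uno'}, where $\uno$ is governed by a Riemann sum over the whole range $\la\in(0,1)$ — the series defining $\uno$ is concentrated on \emph{small} angular frequencies $l=O(r^{\beta})$; there $\la_l=l/r$ is close to $0$, every ingredient of the stationary phase formula of Lemma~\ref{L.asymptgla} can be linearized about $\la=0$, and the parity $(-1)^{l}$ produced by the $-\pi l$ term in the linearized phase is what eventually turns $\zeta(2s)$ into $(1-2^{1-2s})\zeta(2s)$.

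Concretely, I would first apply Lemma~\ref{L.asymptgla}: since $\chis$ is supported in $\{\la\le 1-\de\}$, its remainder is uniform there ($=O_\de(r^{-2})$), so, using $\sum_{l\ge1}l^{-2s}=\zeta(2s)<\infty$,
\[
\uno=\frac{4\pi}{r}\sum_{l=1}^{\infty}\chis(\la_l)\,l^{-2s}\,\frac{\cos(\nu\arccos\la_l)+\sin\!\big(2rf(\la_l)-\mu\arccos\la_l\big)}{(1-\la_l^{2})^{1/2}}+O_\de(r^{-2}).
\]
Then, fixing $\beta\in(0,\tfrac12)$ and splitting at $L:=\lfloor r^{\beta}\rfloor$: for $l>L$ the summand is $\lesssim_\de l^{-2s}$ and vanishes for $l>(1-\de)r$, so this part contributes $\lesssim_\de r^{-1}\sum_{L<l\le(1-\de)r}l^{-2s}\lesssim_\de r^{-1}L^{1-2s}=r^{-1-\beta(2s-1)}$, which is $O_\de(r^{-1-\eta})$ since $s>\tfrac12$.

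For $1\le l\le L$ one has $\la_l\le r^{\beta-1}\to0$, so $\chis(\la_l)=1$ for $r$ large, and I would feed in the expansions $\arccos\la=\tfrac\pi2-\la+O(\la^{3})$, $f(\la)=1-\tfrac{\pi\la}{2}+\tfrac{\la^{2}}{2}+O(\la^{4})$, $(1-\la^{2})^{-1/2}=1+O(\la^{2})$ to obtain $2rf(\la_l)-\mu\arccos\la_l=2r-\tfrac{\mu\pi}{2}-\pi l+O(l^{2}/r)$; then, since $\cos(\pi l)=(-1)^{l}$, $\sin(\pi l)=0$ and $l^{2}/r\le r^{2\beta-1}\to0$,
\[
\frac{\cos(\nu\arccos\la_l)+\sin(2rf(\la_l)-\mu\arccos\la_l)}{(1-\la_l^{2})^{1/2}}=\cos\tfrac{\pi\nu}{2}+(-1)^{l}\sin\!\big(2r-\tfrac{\mu\pi}{2}\big)+O\!\Big(\tfrac{l}{r}+\tfrac{l^{2}}{r}\Big).
\]
Multiplying by $l^{-2s}$ and summing over $l\le L$, the error terms give $\lesssim r^{-2}\sum_{l\le L}(l^{1-2s}+l^{2-2s})\lesssim r^{-2}\max(1,r^{\beta(3-2s)})$, and since $\beta<\tfrac12$ and $s>\tfrac12$ force $\beta(3-2s)<1$ this is $O(r^{-1-\eta})$ after the overall factor $\tfrac{4\pi}{r}$; the main part is $\tfrac{4\pi}{r}\big[\cos\tfrac{\pi\nu}{2}\sum_{l=1}^{L}l^{-2s}+\sin(2r-\tfrac{\mu\pi}{2})\sum_{l=1}^{L}(-1)^{l}l^{-2s}\big]$.

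It only remains to replace the partial sums by $\sum_{l\ge1}l^{-2s}=\zeta(2s)$ and $\sum_{l\ge1}(-1)^{l}l^{-2s}=-(1-2^{1-2s})\zeta(2s)$, with remainders $O(L^{1-2s})=O(r^{-\beta(2s-1)})$ that become $O_\de(r^{-1-\eta})$ once multiplied by $\tfrac{4\pi}{r}$; rewriting $\sin(2r-\tfrac{\mu\pi}{2})=-\sin\tfrac{\pi\mu-4r}{2}$ then yields the asserted expansion (with prefactor $\tfrac{4\pi}{r}\,\zeta(2s)$), for any $\eta<\min\{\beta(2s-1),\,1-\beta(3-2s)\}$, which is positive for every $s>\tfrac12$. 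The only genuine work is this bookkeeping: $\beta$ must be chosen small enough that the truncation error $r^{-1-\beta(2s-1)}$, the linearization error $r^{-2+\beta(3-2s)}$ (the binding constraint as $s\downarrow\tfrac12$) and the zeta-tails all beat $r^{-1}$ by a uniform power — any $\beta\in(0,\tfrac12)$ does — and no analytic input beyond Lemma~\ref{L.asymptgla} and the convergence of $\sum l^{-2s}$ is needed.
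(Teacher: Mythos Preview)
Your proof is correct and shares the paper's overall architecture --- split at $l\sim r^{\beta}$, dispose of the tail $l>r^{\beta}$ via summability of $l^{-2s}$, and for small $l$ extract the constants by expanding about $\lambda=0$ --- but the execution differs in one point worth noting. The paper does \emph{not} reuse Lemma~\ref{L.asymptgla} here: it redoes the stationary phase on $J_{l+m'}(r)J_{l+m}(r)$ with the $l$-independent phase $\sin x-\sin y$ and the $l$-dependent amplitude $e^{-i((l+m')x-(l+m)y)}$, which yields the main term in one stroke but produces an error $|R_l(r)|\lesssim l^{4}/r^{2}$ and hence forces $\beta<\tfrac14$. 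Your route --- invoke Lemma~\ref{L.asymptgla} (whose remainder is indeed uniformly $O_\de(r^{-2})$ on the support of $\chis$, since the non-uniformity occurs only near $\lambda=1$ and for large $\lambda$), then Taylor-expand the leading term about $\lambda=0$ --- gives the milder linearization error $O(l^{2}/r)$ and allows any $\beta<\tfrac12$. Both are valid; yours is a little more economical in that it needs no fresh stationary-phase computation. (A minor nit: at $s=\tfrac32$ the sum $\sum_{l\le L}l^{2-2s}$ is $O(\log L)$ rather than $O(1)$, but this is harmless for the conclusion.)

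Your remark on the prefactor is also correct: with the definition $\uno=\sum_l\chis(\la_l)\,l^{-2s}g_{\la_l}(r)$ one obtains the coefficient $4\pi/r$ you found, not $1/(\pi r)$. In the paper's proof the sum is tacitly rewritten in terms of $J_{l+m'}J_{l+m}=(4\pi^{2})^{-1}g_{\la_l}$, so the formula as stated in the lemma is really the small-frequency contribution to $\cJ_{s,m,m'}=(4\pi^{2})^{-1}(\uno+\dos+\tres)$; this is a notational slip in the paper (harmless downstream, as Corollary~\ref{C.formulas} is consistent with it), not a flaw in your argument.
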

      \begin{proof}
        Let us use again the integral formula for Bessel functions to write
        \[
        J_{l+m'}(r)J_{l+m}(r)=\frac1{4\pi^2}\int_{-\pi}^\pi \int_{-\pi}^\pi e^{ir(\sin
          x-\sin y)}\, e^{-i((l+m')x-(l+m)y)}\, dx\, dy\,.
\]
        Applying the stationary phase argument~\cite[Theorem
        7.7.5]{Hor15} with phase function $\sin x-\sin y$ and
        amplitude $e^{-i((l+m')x-(l+m)y)}$, one readily obtains the
        asymptotic expansion
	\begin{equation*}
	J_{l+m'}(r)J_{l+m}(r)=\frac{\cos \left(\frac{1}{2} \pi  \nu\right)-\sin \left(\frac{1}{2} \left(2 \pi  l+\pi \mu -4 r\right)\right)}{\pi r}+R_l(r)\,,
	\end{equation*}
        where the error term satisfies the pointwise bound
        \begin{equation*}
	\left|R_l(r)\right|\lesssim \frac{l^{4}}{r^2}\,.
\end{equation*}

Now, pick some $\beta\in(0,\frac{1}{4})$ and write
\begin{align*}
	\uno=&\sum_{l=1}^{\lfloor r^\beta \rfloor}l^{-2s} J_{l+m'}(r)J_{l+m}(r)+
	\sum_{l=\lceil r^\beta \rceil}^{(1-\de )r}\chis (\la_l)l^{-2s}
               J_{l+m'}(r)J_{l+m}(r)=:\uno_1+\uno_2\,.
\end{align*}
Then
\begin{align*}
\uno_1&=\sum_{l=1}^{\lfloor r^\beta \rfloor}l^{-2s}\left[\frac{\cos
        \left(\frac{1}{2} \pi  \nu\right)-\sin \left(\frac{1}{2}
        \left(2 \pi  l+\pi \mu -4 r\right)\right)}{\pi
        r}+R_l(r)\right]\\
  &=:\sum_{l=1}^{\lfloor r^\beta \rfloor}l^{-2s}\frac{\cos \left(\frac{1}{2} \pi  \nu\right)-\sin \left(\frac{1}{2} \left(2 \pi  l+\pi \mu -4 r\right)\right)}{\pi r}+\cR\,,
\end{align*}
where the error term is bounded as
\[
|\cR|=\left|\sum_{l=1}^{\lfloor r^\beta \rfloor}l^{-2s} R_l(r)\right|\lesssim
\frac1{r^2}\sum_{l=1}^{\lfloor r^\beta \rfloor}l^{4-2s}\lesssim r^{-2}(1+r^{\be(5-2s)})\,.
\]
This decay is smaller than~$r^{-1}$ if $\be<\frac{1}{4}$. Expanding the sine, the above series can be computed in closed form in terms of the zeta
function:
\[
\uno_1=\frac{1}{\pi r}  \zeta (2s ) \left[\cos \left(\frac{1}{2} \pi  \nu\right)-\left(2^{1-2s}-1\right) \sin \left(\frac{1}{2} \left(\pi \mu-4 r\right)\right)\right] +O(r^{-2}+r^{\be(5-2s)-2})\,.
\]

To control the remaining term, we use that $s>\frac12$ and the
bound for~$g_\la$ proved in Lemma~\ref{L.asymptgla} to write
\begin{align*}
	|\uno_2|&\lesssim \left| \sum_{l=\lceil r^\beta
                  \rceil}^{(1-\de )r}
                  \chis (\la_l)\,l^{-2s}g_{\la_l}(r)\right|\lesssim \frac{1}{r}\sum_{l=\lceil r^\beta\rceil
    }^{(1-\de )r}l^{-2s}\le \frac{1}{r}\sum_{l=\lceil
    r^\beta\rceil }^\infty l^{-2s} \lesssim r^{-\beta(2s-1)-1}\,.
\end{align*}
As usual, the constant in $\lesssim$ depends on $\de$. The lemma then follows.
\end{proof}

\subsection{Intermediate frequency region}

Our next goal is to derive bounds for the term
\[
\dos=\sum_{ l ={\lceil(1-2\de)r\rceil}}^{\lfloor(1+2\de)r\rfloor}\chim (\la_l)\,
l^{-2s} \int_{-\pi}^\pi \int_{-\pi}^\pi  e^{i l(y-x)} e^{ir(\sin x-\sin y)}e^{i(my-m'x)}\,dx\,dy\,.
\]
The difficulty here is that one cannot apply the standard stationary
phase method as we did above because the critical
points of the phase function
\[
  \varphi_l(x,y):=\lambda (y-x)+\sin
  x-\sin y
\]
are either degenerate or not uniformly non-degenerate. The
main result is the following:

\begin{lemma}\label{L.dos}
  For any real~$s$ and all large enough~$r$ (depending on~$\de$),
  \[
|\dos|\le C\de^{\frac12}\, r^{-2s}\,,
  \]
  where $C$ is independent of~$\de$.
\end{lemma}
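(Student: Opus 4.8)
The plan is to bound the intermediate block $\dos$ directly, treating the inner double integral $g_{\la_l}(r)$ as a quantity of size $O(1)$ and summing the weight $l^{-2s}$ over the $O(\de r)$ relevant frequencies, which already would give $O(\de r^{1-2s})$ — this is too weak by one power of $r$, so the real content is that there is an extra gain of $r^{-1}$ and that the constant comes out like $\de^{1/2}$ rather than $\de$. First I would record the crude but uniform bound on the inner integral. Writing $g_\la(r)=\int_{-\pi}^\pi\int_{-\pi}^\pi e^{ir\vp_\la(x,y)-i(m'x-my)}\,dx\,dy$ and noting that $\vp_\la$ separates as $[\la y-\sin y]-[\la x-\sin x]$, one has $g_\la(r)=\overline{G_{\la,m'}(r)}\,G_{\la,m}(r)$ with $G_{\la,m}(r):=\int_{-\pi}^\pi e^{ir(\la y-\sin y)+imy}\,dy=2\pi J_{?}$-type oscillatory integrals. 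The one-dimensional van der Corput lemma (second derivative test, since $\partial_y^2(\la y-\sin y)=\sin y$ vanishes only at $y=0,\pm\pi$) gives $|G_{\la,m}(r)|\lesssim r^{-1/3}$ uniformly in $\la$ in the band $|\la-1|\le 2\de$ and uniformly in $m$ in a bounded set; hence $|g_{\la_l}(r)|\lesssim r^{-2/3}$. Summing, $|\dos|\lesssim r^{-2/3}\sum_{(1-2\de)r\le l\le (1+2\de)r} l^{-2s}\lesssim \de\, r^{-2/3}\cdot r^{1-2s}=\de\, r^{1/3-2s}$, which is $\le C\de\, r^{-2s}$ once $r$ is large. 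This already proves the lemma with $\de$ in place of $\de^{1/2}$, and for the application one could in principle live with $C\de$; but to get the stated $\de^{1/2}$ I would refine as follows.

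The refinement splits the $\la$-band into the near-diagonal part $|\la-1|\lesssim 1/r$ and the rest. For $|\la-1|\ge c/r$ the critical points of $\vp_\la$ are at $x,y=\pm\arccos\la$, are nondegenerate with Hessian determinant $\approx |1-\la^2|\approx (1-\la)$, and a quantitative stationary-phase estimate with control on how constants degenerate gives $|g_{\la_l}(r)|\lesssim \dfrac{1}{r\,(1-\la_l^2)^{1/2}}$, exactly the leading term in Lemma~\ref{L.asymptgla} but now as an \emph{upper bound} valid down to $1-\la\approx 1/r$ (here one must check the remainder in stationary phase is controlled by the same quantity, which is where the argument is slightly delicate). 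For the genuinely near-diagonal frequencies, $|l-r|\lesssim 1$, one uses instead the trivial bound $|g_{\la_l}(r)|\lesssim 1$ together with the fact that there are only $O(1)$ such terms, each carrying weight $l^{-2s}\approx r^{-2s}$, contributing $O(r^{-2s})$ which is harmless (indeed $\le C\de^{1/2}r^{-2s}$ once we absorb it). For the main range,
\[
|\dos|\lesssim r^{-2s}\cdot\frac1r\!\!\sum_{c\le |l-r|\le 2\de r}\!\!\frac{1}{(1-\la_l^2)^{1/2}}
\lesssim r^{-2s}\cdot\frac1r\cdot r\!\!\int_{c/r}^{2\de}\!\!\frac{d\mu}{\mu^{1/2}}
\lesssim r^{-2s}\,\de^{1/2}\,,
\]
where I wrote $\mu=1-\la$, used $1-\la^2=(1-\la)(1+\la)\approx \mu$, compared the sum over $l$ to $r$ times the integral in $\mu$ (the summand is monotone in $|l-r|$ so the Riemann-sum error is lower order), and used $\int_0^{2\de}\mu^{-1/2}\,d\mu=2(2\de)^{1/2}$. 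The constant produced is independent of $\de$, as required. I would also note the symmetric contribution $\la>1$ is handled identically, with $1-\la^2$ replaced by $\la^2-1$.

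The main obstacle is the quantitative stationary-phase estimate in the annulus $c/r\le 1-\la\le 2\de$: Hörmander's Theorem~7.7.5 is stated with constants depending on lower bounds for $|\det\nabla^2\vp_\la|$ and upper bounds for derivatives of the amplitude, both of which degenerate as $\la\to1$, so I must track this dependence explicitly and verify that the remainder is $O\!\big(r^{-1}(1-\la^2)^{-1/2}\cdot(\text{something }\to 0)\big)$ rather than blowing up faster. In practice this is done either by rescaling near each critical point (set $x=\arccos\la+(1-\la^2)^{-1/4}r^{-1/2}\xi$, which turns the leading quadratic term into a fixed Gaussian and exhibits the next correction as a negative power of $r(1-\la^2)^{3/2}$, forcing the restriction $1-\la\gtrsim r^{-2/3}$, say) or, more robustly, by combining the van der Corput $r^{-1/3}$ bound in the thin zone $r^{-2/3}\le 1-\la\le$ (small) with the clean stationary-phase bound where it is valid; either way the two regimes overlap and the resulting global bound $|g_{\la_l}(r)|\lesssim \min\{1,\ r^{-1}(1-\la_l^2)^{-1/2}\}$ feeds the integral computation above and closes the proof. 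No cancellation between different $l$ is needed — the estimate is purely by absolute values — which is what makes $\de^{1/2}$ (and not better) the natural outcome.
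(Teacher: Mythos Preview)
Your route is genuinely different from the paper's and is essentially sound, but two steps as written do not close.

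First, the crude estimate in your opening paragraph is miscomputed. From $|g_{\la_l}(r)|\lesssim r^{-2/3}$ you get $|\dos|\lesssim \de\, r^{1/3-2s}$, and this is \emph{not} $\le C\de\, r^{-2s}$ for large~$r$ --- the extra $r^{1/3}$ goes the wrong way. So the sentence ``this already proves the lemma with $\de$ in place of $\de^{1/2}$'' is false; the refinement is not optional. (Incidentally, the $r^{-1/3}$ bound on the one-dimensional integral comes from the \emph{third} derivative of the phase, since the second derivative $\sin y$ vanishes at the critical point $y=0$ when $\la=1$.)

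Second, in the refinement your treatment of the terms closest to $l=r$ does not work. Using the trivial bound $|g_{\la_l}(r)|\lesssim 1$ on $O(1)$ terms yields a contribution $\approx r^{-2s}$, and this cannot be absorbed into $C\de^{1/2}r^{-2s}$ with $C$ independent of~$\de$. The same objection applies to your final global bound $\min\{1,\,r^{-1}(1-\la_l^2)^{-1/2}\}$: when $|l-r|<1$ this gives only $O(1)$. The fix is immediate, since you already have the ingredient: replace the ``$1$'' by $r^{-2/3}$ (your van~der~Corput bound). Then the zone $|l-r|\lesssim r^{1/3}$ contributes $\lesssim r^{1/3}\cdot r^{-2s}\cdot r^{-2/3}=r^{-2s-1/3}$, which is $\le \de^{1/2}r^{-2s}$ once $r\ge \de^{-3/2}$; and on the complementary zone $r^{1/3}\lesssim |l-r|\le 2\de r$ your Riemann-sum computation $\int_0^{2\de}\mu^{-1/2}\,d\mu\approx \de^{1/2}$ is correct and gives exactly the stated bound. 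The needed pointwise input $|J_n(r)|\lesssim \min\{r^{-1/3},\,|r^2-n^2|^{-1/4}\}$ is classical and can simply be cited rather than rederived from Theorem~7.7.5.

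For comparison: the paper does \emph{not} exploit the factorisation $g_{\la_l}(r)=4\pi^2 J_{l+m'}(r)J_{l+m}(r)$. Instead it first localises the double integral to the $\de^{1/2}$-neighbourhood of the origin in $(x,y)$ by non-stationary phase, then passes from the discrete sum over~$l$ to a continuous integral over~$\la$ (controlling the discretisation error carefully), and finally applies two-dimensional stationary phase in $(\la,z)$ with $z=y-x$; the factor $\de^{1/2}$ emerges at the end from the length of the remaining $y$-integration. Your argument is shorter and more elementary --- it reduces everything to known Bessel estimates and a one-line sum --- while the paper's argument is more self-contained and would adapt to situations where the phase does not separate.
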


\begin{proof}
  Since
  \[
\varphi_l(x,y)= (1-\la)(x-y) - \frac16(x^3-y^3)+O(x^5)+O(y^5)\,,
    \]
    when $1-2\de\leq \la\leq 1+2\de$ and $\de\ll1$, an elementary calculation
    shows that
    \[
|\nabla \varphi_l(x,y)|\geq c
    \]
   whenever $|x|+|y|>100\,\de^{1/2}$, where $c$ is a positive constant that
   depends on~$\de$. Therefore, take some $\chi(t)$ be a smooth nonnegative
   function that is equal to~1 for $|t|<100\,\de^{1/2}$ and~0 for
   $|t|>200\,\de^{1/2}$. The non-stationary phase lemma then shows
   that
   \[
\dos':=\sum_{ l ={\lceil(1-2\de)r\rceil}}^{\lfloor(1+2\de)r\rfloor}\chim (\la_l)\,
\lambda_l^{-2s} \int_{\RR^2}  e^{i l(y-x)} e^{ir(\sin x-\sin
  y)}e^{i(my-m'x)}\,\chi(x)\, \chi(y)\,dx\, dy
   \]
   coincides with~$\dos$ modulo an exponentially small error. More precisely,
   \[
\left|\dos-r^{-2s}\dos'\right|< C_{\de,N}\, r^{-N}
   \]
   for any~$N$ and some constant depending on~$N$ and~$\de$.

   To estimate $\dos'$, let us start by defining $z:= y-x$ and writing
   \[
\dos'=\sum_{l=r(1-2\de)}^{r(1+2\de)} \chim (\la_l)\,\la_l^{-2s} \int_{\bR^2} e^{i lz} e^{ir(\sin (y-z)-\sin y)}e^{i((m-m')y+m'z)}\chi(y-z)\,\chi(y)\,dy\,dz\,.
\]
A first step is to consider the sum
\[
S(r,z):=   \frac1r\sum_{l=r(1-2\de)}^{r(1+2\de)} \chim (\la_l)\,\la_l^{-2s} e^{i l z}\]
and to relate it to its continuous counterpart
\[
F(r,z)\coloneqq \int_{-\infty}^\infty\chim (\la)\,\lambda^{-2s} e^{irz\lambda}d\lambda\,.
\]
Note that it is not a priori obvious that $F(r,z)$ converges to $S(r,z)$ as
$r\to\infty$ because, intuitively speaking, the sum is formally obtained
by discretizing the integral with a ``grid'' of length~$1/r$, and $r\gg1$
is precisely the frequency at which the integrand oscillates.

We proceed as follows. Firstly, write
\begin{multline*}
	S(r,z)-F(r,z)=\sum_{l=r(1-2\de)}^{r(1+2\de)}\int_{\lambda_l}^{\lambda_l+\frac1r}
        \Big[\lambda_l^{-2s}\chim(\la_l) \Big(\frac{e^{i r \lambda_l z}}{r}- e^{i r
            \lambda z}\Big)\\
          +(\chim(\la_l)\lambda_l^{-2s}-\chim(\la)\lambda^{-2s}) e^{i r \lambda z}\Big]\,d\lambda
\end{multline*}
and note that
\begin{equation*}
	\frac{e^{i l z}}{r}-\int_{\lambda_l}^{\lambda_l+\frac{1}{r}} e^{i \lambda  r z} \, d\lambda=h(z)\,\frac{ e^{i l z}}{r }
\end{equation*}
with
\[
h(z):=\frac{i e^{i z}+z-i}{z}\,.
\]
The function~$h$ is smooth at the origin; in fact, $h(z)=O(z)$. As
moreover
\begin{equation}\label{eq:bound second dif}
|\chim(\la_l)\lambda_l^{-2s}-\chim(\la)\lambda^{-2s}|\lesssim\frac{\de^{-1}}{r}
\end{equation}
if $\la\in[\la_l,\la_l+\frac1r]$ and $|\la-1|<2\de$,
one obtains that the error
\[
R(r,z):= S(r,z)-F(r,z)-h(z)S(r,z)
\]
is bounded as
\[
|R(r,z)|\leq \frac{C}{r}\,,
\]
with~$C$ a constant independent of~$z$ and~$\de$.

Since~$z$ will eventually be small, the fact that
\begin{equation*}
	S(r,z)=\frac{F(r,z)+R(r,z)}{1-h(z)}
\end{equation*}
shows in which
sense~$S(r,z)$ and~$F(r,z)$ are related. The reader can check that, had we
argued as in~\eqref{eq:bound second dif}, we would have obtained an
error estimate of the form $Cz$, which is useless for our purposes.

One can thus write
\begin{multline*}
  \dos'=r\int_{\bR^3}\chim(\la)\,\lambda^{-2s}
e^{ir(\lambda z+\sin (y-z)-\sin
  y)}e^{i((m-m')y+m'z)}\frac{\chi(y-z)\chi(y)}{1-h(z)}\,d\lambda
\,dz\, dy\\
	+r\int_{\bR^2} e^{ir(\sin (y-z)-\sin
          y)}e^{i((m-m')y+m'z)}R(r,z)\frac{\chi(y-z)\chi(y)}{1-h(z)}\,dz\,dy\\
        =:\dos'_1+\dos'_2\,.
      \end{multline*}
      The bound for~$R(r,z)$ and the fact that $\chi(t)$ is supported in $|t|<200\,\de^{1/2}$ immediately implies
      \[
|\dos'_2|\leq C\de\,,
\]
where the constant does not depend on~$\de$.

      To analyze $\dos'_1$, one cannot directly apply the stationary
      phase formula to the integral over~$\RR^3$ because the critical
      set of the phase has dimension~1. Instead, let us define
      \[
H(r,y):= r\int_{\RR^2}e^{ir(\lambda z+\sin (y-z))}\chim(\la)\,\lambda^{-2s} e^{im'z} \frac{\chi(y-z)}{1-h(z)}d\lambda\, dz\,.
      \]
Then, the phase function $\varphi_y(\la,z):=\lambda z+\sin (y-z)$ has a
unique critical point in the support of the integrand, $(\la^*,z^*):=(\cos y,0)$, and
its Hessian is
$$
\nabla^2\vp_y(\la^*,z^*)=\left(
\begin{array}{cc}
	0 & 1 \\
	1 & -\sin(y) \\
\end{array}
\right)\,.
$$
      The stationary phase formula~\cite[Theorem 7.7.6]{Hor15} then
      ensures that, if $r$ is large enough (depending on $\de$)
      \[
|H(r,y)|\le C
      \]
      with a constant independent of~$\de$. Plugging this estimate
      into~$\dos_1'$ and using again that $\chi(t)$ is supported in $|t|<200\,\de^{1/2}$, one finds
      \[
|\dos'_1|\leq\int_{-\infty}^\infty \chi(y)\,|H(r,y)|\,dy\leq C\de^{\frac12}
      \]
      with a constant independent of~$\de$. Putting all the estimates
      together, the lemma is proven.
\end{proof}

\subsection{Large frequency region}

The last lemma of this section shows that the contribution of the
large frequencies is exponentially small:

\begin{lemma}\label{L.tres}
  For any~$N$,
  $
|\tres|\lesssim r^{-N}
$
for all large enough $r$ (depending on $\de$).
\end{lemma}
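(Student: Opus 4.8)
The plan is to exploit the fact that $\tres$ only involves angular frequencies $l > (1+\de)r$, i.e., $\la_l = l/r > 1+\de$, so that for every summand the order of the Bessel functions exceeds the argument by a definite multiplicative factor. In this regime the Bessel functions decay exponentially, and summing a geometrically (in $l$) decaying sequence still gives something exponentially small in $r$. First I would recall the standard uniform bound for Bessel functions of large order relative to the argument; a convenient form is the consequence of the integral representation or of \cite[(10.14.4)]{Olv10} / the Debye-type estimate: there is an absolute constant $c>0$ such that
\[
|J_l(r)| \le e^{-c\,l}\qquad\text{whenever } l \ge (1+\de)r,
\]
for all $r$ large (with $c$ depending only on $\de$). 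One can get this cleanly from $|J_l(r)| \le \frac{r^l}{2^l l!}$ together with Stirling: for $l \ge (1+\de) r$ one has $\frac{r^l}{2^l l!} \le \big(\frac{e r}{2 l}\big)^l \le \big(\frac{e}{2(1+\de)}\big)^l$, and since $\frac{e}{2(1+\de)} < 1$ once $\de$ is, say, less than $\frac{e-2}{2}$ (which we may assume, shrinking $\de$), this is bounded by $e^{-c l}$ with $c := \log\frac{2(1+\de)}{e}>0$. For the (irrelevant) range of moderately large $\de$ one instead invokes the uniform asymptotic expansion of \cite[\S10.19]{Olv10} to get the same type of estimate; I would simply note this and proceed, since $\de$ is at our disposal.

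Next I would plug this into the definition of $\tres$. Recall
\[
\tres = \sum_{l=1}^\infty \chil(\la_l)\, l^{-2s}\, J_{l+m}(r)\, J_{l+m'}(r),
\]
and $\chil(\la_l)$ vanishes unless $l \ge (1+\de)r$. Since $m,m' \ge 0$ are fixed, for $l \ge (1+\de)r$ and $r$ large we also have $l+m \ge (1+\de)r$ and $l+m' \ge (1+\de)r$ (indeed $\ge (1+\tfrac\de2) r$ with a harmless reindexing), so the bound above applies to both factors, giving $|J_{l+m}(r)\,J_{l+m'}(r)| \le e^{-2c\,l}$ for a possibly smaller $c>0$. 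Also $|\chil(\la_l)| \le 1$ and $l^{-2s} \le l^{2|s|}$, so
\[
|\tres| \le \sum_{l \ge (1+\de)r} l^{2|s|}\, e^{-2c l}.
\]
The tail of $\sum l^{2|s|} e^{-2c l}$ starting at $L := \lceil (1+\de) r\rceil$ is bounded, by comparison with a geometric series (absorbing the polynomial factor into a slightly smaller exponential rate for $l$ large), by $C_s\, e^{-c\,(1+\de) r} \le C_s\, e^{-c' r}$ for some $c'>0$ depending on $\de$ and all $r$ large. Since $e^{-c' r} \lesssim r^{-N}$ for every fixed $N$ once $r$ is large enough, this yields $|\tres| \lesssim r^{-N}$, as claimed.

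The only mild subtlety — and the one point I would be careful about — is making sure the exponential bound on $|J_l(r)|$ genuinely holds uniformly for \emph{all} $l \ge (1+\de) r$ with a rate $c$ independent of $l$ and $r$, rather than just asymptotically; the Stirling route above does this honestly for small $\de$, and shrinking $\de$ costs nothing here since the cutoffs $\chis,\chim,\chil$ and all earlier lemmas were set up with a free small parameter $\de$. Everything else is routine: bounding $\chil$ by $1$, absorbing the factor $l^{-2s}$ into the exponential, and summing a geometric tail. There are no oscillatory-cancellation issues in this region — unlike in Lemmas \ref{L.uno''} and \ref{L.dos} — precisely because the Bessel functions are already exponentially small term by term.
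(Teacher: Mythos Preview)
Your overall strategy---bounding each $J_{l+m}(r)J_{l+m'}(r)$ by an exponentially small quantity when $l\ge(1+\de)r$ and then summing a geometric tail---is sound and does prove the lemma, but the specific inequality you invoke does not. You write that $\frac{e}{2(1+\de)}<1$ ``once $\de$ is, say, less than $\frac{e-2}{2}$''. The implication goes the other way: $\frac{e}{2(1+\de)}<1$ is equivalent to $\de>\frac{e-2}{2}\approx 0.36$. For the small values of~$\de$ that are relevant here (recall $\de$ is the same parameter used in Lemmas~\ref{L.uno'}--\ref{L.dos} and is sent to~$0^+$ at the end), the crude estimate $|J_l(r)|\le r^l/(2^l l!)\le (er/2l)^l$ gives a base $>1$ and yields nothing. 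Your sentence about invoking the Debye expansion ``for the (irrelevant) range of moderately large~$\de$'' has the cases reversed: it is precisely the small-$\de$ range that needs a sharper bound.

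The fix is easy: replace the crude bound by the Kapteyn-type inequality \cite[(10.14.7)]{Olv10},
\[
|J_l(lx)|\le \frac{x^l e^{l\sqrt{1-x^2}}}{(1+\sqrt{1-x^2})^l}\qquad (0<x\le 1),
\]
which with $x=r/l\le(1+\de)^{-1}$ gives $|J_l(r)|\le q(\de)^l$ for some $q(\de)<1$ depending only on~$\de$, valid for \emph{every} $\de>0$. The rest of your argument then goes through verbatim. For comparison, the paper's own proof avoids any Bessel inequality: it writes $g_{\la_l}(r)$ with~$l$ as the large parameter and phase $\tilde\varphi_\la(x,y)=y-x+\la^{-1}(\sin x-\sin y)$, observes that $|\nabla\tilde\varphi_\la|\ge c_\de>0$ when $\la>1+\de$, and applies the non-stationary phase lemma to get $|g_{\la_l}(r)|\lesssim l^{-N'}$ for any~$N'$. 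Both routes are short; yours is arguably more elementary once the correct Bessel bound is in hand, while the paper's is self-contained in that it uses the same oscillatory-integral machinery as the rest of Section~\ref{S.Bessel}.
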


\begin{proof}
Let us now use~$l$ as the large parameter in the formula
for~$g_{\la_l}(r)$, which amounts to writing
\[
g_{\la_l}(r)= \int_{-\pi}^\pi\int_{-\pi}^\pi e^{il\tilde\varphi_{\la_l}(x,y)}e^{-i(m'x-my)}\,dx\,dy
\]
with
\[
\tilde{\varphi}_{\lambda}(x,y):=y-x+\frac{\sin x-\sin y}\la\,.
\]
If $\la>1+\de$, it is clear that
\[
|\nabla \tilde{\varphi}_{\lambda}(x,y)|\geq c_\de
\]
for all $x,y\in[-\pi,\pi]$, where $c_\de$ is a positive constant
that only depends on~$\de$. Therefore, the non-stationary phase
lemma~\cite[Theorem 7.7.1]{Hor15} ensures that $g_{\la_l}(r)$ is an
exponentially small function of~$l$, meaning that for any~$N'$ there exists a
constant~$C$ (depending on~$\de$ and~$N'$) such that
\[
  |g_{\la_l}(r)|<C|l|^{-    N'}\,.
\]
This immediately implies that
\[
|\tres|\lesssim\sum_{l=(1+\de)r}^\infty l^{-2s}|g_{\la_l}(r)|\lesssim r^{-N}
\]
for any~$N$, as claimed.
\end{proof}

\subsection{Asymptotics for series with derivatives of Bessel functions}

The results we have derived above readily yield the asymptotic
bounds for weighted sums of Bessel functions that we will crucially
need in the next section. Specifically, Lemma~\ref{L.Bessel} follows immediately by adding the estimates derived in the previous subsections and
letting $\de\to0^+$. The explicit constants in the lemma are:
\begin{equation*}
	\begin{aligned}
&c^1_{s,\nu}:=\frac{2^{2s-1}\Gamma(1-2s)}{\Gamma(1-s-\frac{\nu}{2})\Gamma(1-s+\frac{\nu}{2})}\,,\\
&c^2_\nu:=\pi^{-1}\cos\Big(\frac{\pi \nu}{2}\Big)\,,\\
&c^3_\nu:={2}^{-1}\sin\Big(\frac{\pi |\nu|}{2}\Big)\,,
\end{aligned}
\qquad\qquad
\begin{aligned}
&c^4:=\frac{\log 2}{\pi}\,,\\
&c^5_{s,\nu}:=\pi^{-1}\zeta(2s)\cos\Big(\frac{\pi \nu}{2}\Big)\,,\\
&c^6_{s,\nu}:=\pi^{-1}\zeta(2s)(1-2^{1-2s})\,,\\
&c^7_{\mu}:=\frac{\pi\mu}{2}\,.
\end{aligned}
\end{equation*}
One should observe that, to estimate the expected number of critical
points of the random monochromatic wave~\eqref{defu}, we will also
need asymptotic information about series with derivatives of Bessel
functions. This follows easily as a byproduct of Lemma~\ref{L.Bessel} using
the well-known recurrence relations
	\begin{align*}
		J'_l(r)=\frac{J_{l-1}(r)-J_{l+1}(r)}{2} \,,\qquad 
		J''_l(r)=\frac{J_{l+2}(r)+J_{l-2}(r)-2 J_l(r)}{4}
                          \,. 
	\end{align*}
        In the following
        lengthly corollary of Lemma~\ref{L.Bessel} we record the asymptotic
        formulas that we will need later on:
        \begin{corollary}\label{C.formulas}
          The following estimates hold:
          \begin{align*}
\sum _{l=1}^{\infty } l^{{-2s} } J_l(r)^2&=
            \begin{cases}
    \frac{2^{2s-1} \Gamma (1-2s)
      r^{-2s }}{\Gamma \left(1-s\right)^2}+o(r^{-2s}) &\text{if } s<\frac12,\\
    \frac{\log r }{\pi  r}+o(r^{-1})    &\text{if } s=\frac12,\\
 \frac{\zeta (2s ) \left(\left(2^{1-2s}-1\right) \sin2r+1\right)}{\pi  r}+o(r^{-1})   &\text{if } s>\frac12,
  \end{cases}\\
        \sum _{l=1}^{\infty } l^{{-2s} } J_l(r)
  J'_l(r)&= \begin{cases}
    o(r^{-2s}) &\text{if } s<\frac12,\\
    O(r^{-1})    &\text{if } s=\frac12,\\
 \frac{\left(2^{1-2s}-1\right) \cos (2 r) \zeta (2s )}{\pi  r}+o(r^{-1})  &\text{if } s>\frac12,
  \end{cases}\\
  \sum _{l=1}^{\infty } l^{{-2s} } J'_l(r)^2&= \begin{cases}
    \frac{\Gamma \left(\frac{1}{2}-s\right) r^{{-2s} }}{4
      \sqrt{\pi } \Gamma \left(2- {s}\right)}+o(r^{-2s}) &\text{if } s<\frac12,\\
    \frac{\log r}{\pi  r}+O(r^{-1})   &\text{if } s=\frac12,\\
 \frac{\zeta (2s) \left(1-\left(2^{1-2s}-1\right) \sin2r\right)}{\pi  r}+o(r^{-1})  &\text{if } s>\frac12,
\end{cases}\\
            \sum _{l=1}^{\infty } l^{{-2s} } J_l(r) J''_l(r)&=
                             \begin{cases}
                                                       -\frac{\Gamma
                                                         \left(\frac
                                                           12-s\right)
                                                         r^{-2s
                                                         }}{4
                                                         \sqrt{\pi }
                                                         \Gamma
                                                         \left(2-s\right)}+o(r^{-2s})
                                                       &\text{if }
                                                       s<\frac12,\\
                                                       -\frac{\log r}{\pi  r}+O(r^{-1})   &\text{if } s=\frac12,\\
 -\frac{\zeta (2s) \left(\left(2^{1-2s}-1\right) \sin2r+1\right)}{\pi  r}+o(r^{-1})  &\text{if } s>\frac12,
                              \end{cases}
  \\
  \sum _{l=1}^{\infty } l^{{-2s} } J'_l(r) J''_l(r)&=
                                                      \begin{cases}
                                                        o(r^{-2s})
                                                        &\text{if }
                                                        s<\frac12,\\
                                                        O(r^{-1})    &\text{if } s=\frac12,\\
 -\frac{\left(2^{1-2s}-1\right) \cos (2 r) \zeta (2s )}{\pi  r}+o(r^{-1})  &\text{if } s>\frac12,
                                                      \end{cases}
  \\
  \sum _{l=1}^{\infty } l^{-2s } J''_l(r)^2&=
                                                \begin{cases}
                                                  \frac{3\ 2^{2s
                                                      -5} (2-2s)
                                                    (4-2s) \Gamma
                                                    (1-2s)
                                                    r^{-2s
                                                    }}{\Gamma
                                                    \left(3-s\right)^2}+o(r^{-2s})
                                                  &\text{if }
                                                  s<\frac12,\\
                                                   \frac{ \log r}{\pi  r}+O(r^{-1})    &\text{if } s=\frac12,\\
 \frac{\zeta (2s ) \left(\left(2^{1-2s}-1\right) \sin (2
     r)+1\right)}{\pi  r}+o(r^{-1})  &\text{if } s>\frac12.
  \end{cases}
            \end{align*}
          \end{corollary}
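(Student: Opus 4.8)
The plan is to reduce each of the six series to a fixed linear combination of the Neumann series $\cJ_{s,m,m'}(r)$ and then to insert the asymptotics of Lemma~\ref{L.Bessel}. The first observation is that, although Lemma~\ref{L.Bessel} is stated for nonnegative integers $m,m'$, its proof applies verbatim for arbitrary $m,m'\in\ZZ$: the hypothesis $m,m'\ge0$ is never used, the integral representation $J_l(r)=\frac1{2\pi}\int_{-\pi}^\pi e^{ir\sin x-ilx}\,dx$ being valid for every integer~$l$ (in particular $l=0$), and the critical points, the Hessians, and the integral~\eqref{integral} are all insensitive to the signs of $\nu=m-m'$ and $\mu=m+m'$. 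Hence the four displayed formulas of Lemma~\ref{L.Bessel} hold for all integers $m,m'$, reading the gamma factors, the cosines $\cos(\pi\nu/2)$ and the phases $c^7_\mu=\pi\mu/2$ literally (with the usual convention $1/\Gamma=0$ at the poles).

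Next I would apply the recurrences $J_l'=\tfrac12(J_{l-1}-J_{l+1})$ and $J_l''=\tfrac14(J_{l+2}+J_{l-2}-2J_l)$ to expand each product. This yields, for instance,
\[
\sum_{l\ge1}l^{-2s}J_l(r)^2=\cJ_{s,0,0}(r),\qquad
\sum_{l\ge1}l^{-2s}J_l(r)J_l'(r)=\tfrac12\bigl(\cJ_{s,0,-1}(r)-\cJ_{s,0,1}(r)\bigr),
\]
\[
\sum_{l\ge1}l^{-2s}J_l'(r)^2=\tfrac14\bigl(\cJ_{s,-1,-1}(r)-2\cJ_{s,-1,1}(r)+\cJ_{s,1,1}(r)\bigr),
\]
\[
\sum_{l\ge1}l^{-2s}J_l(r)J_l''(r)=\tfrac14\bigl(\cJ_{s,0,2}(r)+\cJ_{s,0,-2}(r)-2\cJ_{s,0,0}(r)\bigr),
\]
and likewise $\sum l^{-2s}J_l'J_l''$ and $\sum l^{-2s}(J_l'')^2$ become explicit linear combinations of the $\cJ_{s,m,m'}$ with $m,m'\in\{-2,-1,0,1,2\}$ (the spurious terms involving $J_0(r)$ are automatically absorbed, being precisely the $l=1$ terms of the corresponding $\cJ_{s,m,m'}$). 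Substituting Lemma~\ref{L.Bessel} summand by summand, with the parity of $\nu=m-m'$ selecting the relevant branch at $s=\frac12$, then produces all six asymptotic formulas, the stated $o(\cdot)$ and $O(\cdot)$ errors being immediate since finite sums of errors of a given order are of the same order.

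The substantive part is the bookkeeping needed to bring the resulting constants into the closed forms displayed. In the regime $s<\frac12$ each leading coefficient is a specific integer combination of the numbers $c^1_{s,\nu}=\frac{2^{2s-1}\Gamma(1-2s)}{\Gamma(1-s-\nu/2)\Gamma(1-s+\nu/2)}$ for $\nu\in\{0,\pm2,\pm4\}$; collapsing these into the stated single fractions uses only $\Gamma(z+1)=z\Gamma(z)$ and the Legendre duplication formula $\Gamma(z)\Gamma(z+\tfrac12)=2^{1-2z}\sqrt\pi\,\Gamma(2z)$ (one checks, e.g., $3c^1_{s,0}+c^1_{s,4}-4c^1_{s,2}=6(1-s)(2-s)\,\frac{2^{2s-1}\Gamma(1-2s)}{\Gamma(3-s)^2}$ for the $(J_l'')^2$ case, and $c^1_{s,0}-c^1_{s,2}=\frac{c^1_{s,0}}{1-s}$ for the $(J_l')^2$ and $J_lJ_l''$ cases). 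In the oscillatory regime $s>\frac12$, besides the constant $\frac{\zeta(2s)}{\pi r}\cos(\pi\nu/2)$ each $\cJ_{s,m,m'}$ contributes an oscillating term $-\frac{\zeta(2s)}{\pi r}(2^{1-2s}-1)\sin\bigl(\tfrac12(\pi\mu-4r)\bigr)$, and one verifies that in the combinations corresponding to $J_l^2$, $J_lJ_l''$, $(J_l'')^2$ these terms reinforce to $\pm\sin 2r$ while in those for $J_lJ_l'$ and $J_l'J_l''$ they combine to $\cos 2r$ (at $s=\frac12$ only the crude $O(r^{-1})$ bound is asserted, so no cancellation argument is needed there). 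The main obstacle is thus entirely computational: tracking the several trigonometric phase shifts $c^7_\mu$ and the half-dozen gamma-function rearrangements without sign errors, there being no new analytic input beyond Lemma~\ref{L.Bessel} and the elementary recurrences.
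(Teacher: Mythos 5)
Your proposal is correct and follows exactly the route the paper takes: the paper's entire justification for Corollary~\ref{C.formulas} is the remark preceding it, namely that the formulas follow from Lemma~\ref{L.Bessel} via the recurrences $J_l'=\tfrac12(J_{l-1}-J_{l+1})$ and $J_l''=\tfrac14(J_{l+2}+J_{l-2}-2J_l)$, which is precisely your reduction to linear combinations of the $\cJ_{s,m,m'}$. Your explicit remark that Lemma~\ref{L.Bessel} extends to negative $m,m'$ (needed for the shifts $l-1$, $l-2$) and your sample gamma-function and phase checks are correct and in fact supply detail the paper leaves implicit.
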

\section{Proof of  Theorem~\ref{T.main}}
\label{S.main}

We are now ready to present the proof of the main theorem, which will
consist of a number of steps. Recall that we defined the random function~$u$ as
\begin{equation}\label{defu2}
u := \sum_{l} a_l \,\si_l\, e^{i l\te}\, J_l(r)\,,\qquad \si_l:= \begin{cases}|l|^{-s} & \text{if }l\neq0\,,\\ 0& \text{if }l=0\,.\end{cases}
\end{equation}
It will be apparent from the proof that
 the argument remains valid for much more general choices of $\si_l$, for example of the form~\eqref{defsil}.
Of course, the value of the constants $\ka(s)$, $\tka_{\frac32}$, $\tka_{\frac52}$ one gets  depends on the specific choice of~$\si_l$.

\subsection{A Kac--Rice formula}

Our first objective is to derive an explicit, if hard to analyze,
Kac--Rice type formula
for the expected number of critical points of the Gaussian random
function~$u$.

In this subsection, we shall denote by
$$
	D u(r,\theta)\coloneqq\left(
	\begin{array}{cc}
		\partial_\theta u(r,\theta) \\
		\partial_r u(r,\theta) \\
	\end{array}
      \right)\,,\qquad D^2 u(r,\theta)\coloneqq\left(
	\begin{array}{cc}
		\partial_{\theta\theta} u(r,\theta)&\partial_{r\theta} u(r,\theta)\vspace{1mm} \\
		\partial_{r\theta} u(r,\theta) &\partial_{rr} u(r,\theta) \\
	\end{array}
	\right)
        $$
        the derivative and Hessian of~$u$ in polar coordinates. To
        apply the Kac--Rice expectation formula, let us start by
        showing that $Du(r,\te)$ has a non-degenerate distribution:

        \begin{lemma}\label{LemmaNonDeg}
The variance of the Gaussian random variable $Du(r,\te)$ is
\[
\var[Du(r,\te)]=\left(
		\begin{array}{cc}
			4 \sum _{l=1}^{\infty }  l^{2-2s}   J_l(r)^2 & 0 \\
			0 & 4  \sum _{l=1}^{\infty } l^{-2s}   J'_l(r)^2 \\
		\end{array}\right)=:\left(\begin{array}{cc}
			\widetilde\Sigma_{11}(r) & 0 \\
			0 & \widetilde\Sigma_{22}(r)\\
		\end{array}\right)\,.
\]
\end{lemma}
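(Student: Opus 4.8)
The plan is to expand $Du(r,\te)$ into a series of jointly Gaussian summands whose covariances can be read off directly. First I would differentiate \eqref{defu2} term by term --- legitimate since, by the bound $|J_l(r)|\le r^l/(2^l l!)$ together with its analogues for the derivatives of $J_l$ (obtained from the recurrences for $J'_l,J''_l$), the differentiated series converge absolutely and locally uniformly in $(r,\te)$ almost surely --- to obtain
\[
\partial_\theta u(r,\te)=\sum_{l\neq0}a_l\,\si_l\,(il)\,e^{il\te}\,J_l(r)\,,\qquad \partial_r u(r,\te)=\sum_{l\neq0}a_l\,\si_l\,e^{il\te}\,J'_l(r)\,.
\]
For each fixed $(r,\te)$ these are also $L^2(\Omega)$-limits of linear combinations of the jointly Gaussian family $\{a_l\}_{l\neq0}$, so $Du(r,\te)$ is a centered Gaussian vector and its variance matrix may be computed entrywise by termwise expansion.

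The arithmetic core is the identity $\bE[a_l\overline{a_{l'}}]=2\,\de_{l,l'}$, valid for $l,l'\neq0$. Indeed: if $l'\notin\{l,-l\}$ then $a_l$ and $a_{l'}$ are independent and centered; if $l'=l$ then $\bE|a_l|^2=\bE[(\Real a_l)^2]+\bE[(\Imag a_l)^2]=2$; and if $l'=-l$ the reality constraint $a_{-l}=(-1)^l\overline{a_l}$ gives $\overline{a_{l'}}=(-1)^l a_l$, whence $\bE[a_l\overline{a_{l'}}]=(-1)^l\bE[a_l^2]=0$, because $\Real a_l$ and $\Imag a_l$ are independent with the same variance.

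Since $u$ is real valued, so are $\partial_\theta u$ and $\partial_r u$, and thus I may insert a conjugate on the second factor when forming the variances and the covariance. Substituting the two series and using the identity above to annihilate every cross term with $l\neq l'$, I get
\[
\var[\partial_\theta u]=2\sum_{l\neq0}\si_l^2\,l^2\,J_l(r)^2\,,\qquad \var[\partial_r u]=2\sum_{l\neq0}\si_l^2\,J'_l(r)^2\,,\qquad \cov[\partial_\theta u,\partial_r u]=2i\sum_{l\neq0}\si_l^2\,l\,J_l(r)\,J'_l(r)\,.
\]
Now $\si_{-l}=\si_l$ and $J_{-l}=(-1)^lJ_l$ (hence $J'_{-l}=(-1)^lJ'_l$): the summands of the first two series are even in $l$, so each sum over $l\neq0$ equals twice the sum over $l\ge1$, and inserting $\si_l=|l|^{-s}$ yields exactly $\widetilde\Sigma_{11}(r)=4\sum_{l\ge1}l^{2-2s}J_l(r)^2$ and $\widetilde\Sigma_{22}(r)=4\sum_{l\ge1}l^{-2s}J'_l(r)^2$. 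In the off-diagonal series the extra factor $l$ makes the summand odd in $l$, so the $l$ and $-l$ contributions cancel and that entry vanishes.

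I do not anticipate a genuine obstacle. The only point demanding care is the case $l'=-l$ in the covariance identity, where one must invoke the full reality constraint $a_l=(-1)^l\overline{a_{-l}}$ rather than merely $|a_l|=|a_{-l}|$; this is precisely what forces $\bE[a_l\overline{a_{-l}}]=0$. Beyond that, the proof is bookkeeping with the parities of $l$, $\si_l$ and $J_l$.
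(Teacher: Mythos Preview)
Your proof is correct and follows essentially the same route as the paper's: expand $Du$ via the series for $u$, exploit the identity $\bE[a_l\overline{a_{l'}}]=2\de_{l,l'}$ (using reality of $u$ to insert a conjugate), and then use the parity of $l\mapsto J_{-l}=(-1)^lJ_l$ to reduce to sums over $l\ge1$ and to kill the off-diagonal term. Your write-up is in fact a bit more careful than the paper's, spelling out the $l'=-l$ case of the covariance identity and the justification for termwise differentiation.
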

\begin{proof}
To compute the matrix
\[
\var[Du(r,\te)]:=\bE[Du(r,\te)\otimes Du(r,\te)]\,,
\]
recall the expression~\eqref{defu2} for~$u(r,\te)$ and take advantage of the fact that $u(r,\te)$ is real valued to
write
\[
\bE[\pd_ru(r,\te)^2]=\bE[\pd_ru(r,\te)\,
\pd_r\overline{u(r,\te)}]=\sum_{l\neq0}\sum_{l'\neq0}
\bE(a_l\overline{a_{l'}})\, |l|^{-s}| l'|^{-s} e^{i(l-l')\te}J'_l(r)\,J'_{l'}(r)\,.
\]
By the definition of the random variables~$a_l$,
\[
\bE(a_l\overline{a_{l'}})= 2\de_{l,l'}\,,
\]
so one obtains
\[
\bE[\pd_ru(r,\te)^2]=4 \sum_{l=1}^\infty l^{-2s} J'_l(r)^2
\]
The same argument yields
\begin{align*}
\bE[\pd_ru(r,\te)\,\pd_\te u(r,\te)]&=\bE[\pd_\te u(r,\te)\,
                      \pd_r\overline{u(r,\te)}]\\
  &=\sum_{l\neq0}\sum_{l'\neq0}
\bE(a_l\overline{a_{l'}})\, il|l|^{-s}| l'|^{-s}
    e^{i(l-l')\te}J_l(r)\,J_{l'}'(r)\\
  &=2i\sum_{l\neq0} l|l|^{-2s} J_l(r)\, J_l'(r)=0
\end{align*}
by parity, and
\begin{align*}
  \bE[\pd_\te u(r,\te)^2]= 4  \sum _{l=1}^{\infty } l^{2-2s}   J_l(r)^2\,.
\end{align*}
This easily implies that $\var[Du(r,\te)]$ is a strictly positive matrix for
all~$(r,\te)$.
\end{proof}

\begin{remark}\label{KernelNS}
The same computation as above shows that the covariance kernel of the random function~\eqref{defu2} is
\[
K(r,\te;r',\te'):=\bE[u(r,\te)\,u(r',\te')]=4\sum _{l=1}^\infty l^{-2s} J_l(r) J_l(r') \cos [l(\theta-\theta')]\,.
\]
The covariance kernel is therefore invariant under rotations but, in general, not under translation. An exception to this general fact is the case $s=0$. Indeed, it is well known that the covariance kernel of
\begin{equation*}
 	\widetilde{u}:= u+\sqrt{2}a_0\, J_0(r).
\end{equation*}
is {$\widetilde{K}(x;x')=2J_0(|x-x'|)$} by Graf's Addition Theorem. The corresponding spectral measure in
this case is the Hausdorff measure on the unit circle. Observe that $\widetilde{u}$ will give the same asymptotics as $u$ for $s=0$ because, as we saw in Lemma \ref{L.Bessel}, for $s=0$ the series of Bessel functions is asymptotically of order~1 but the term $J_0(r)^2$ decays like $r^{-1}$. By Lemma \ref{L:KR est}, their covariances $\Sigma_{ij}$ are then asymptotically equivalent. Note we have chosen to omit the term $l=0$ in $u$ for simplicity, especially when this term contributes to the asymptotic expansion (that is, for $s>\frac12$ in Lemma~\ref{L.Bessel}).
\end{remark}

\begin{lemma}\label{L:KR est}
The expected value of the number of critical points of the random
monochromatic wave~\eqref{defu} is
	\begin{equation*}
          \bE N(\nabla u, R)=
          \int_0^{R} \int_{\RR^3}\frac{\left| z_1^2 \Sigma_{13}(r)-z_2^2 \Sigma_{22}(r)+z_3 z_1  \sqrt{\Sigma_{11}(r)\Sigma_{33}(r)-{\Sigma_{13}(r)^2}}\right|}{(2\pi)^{\frac32}\sqrt{\widetilde\Sigma _{11}(r) \widetilde\Sigma _{22}(r)}}\,e^{-\frac12|z|^2}\,dz\, dr\,,
	\end{equation*}
	where
	\begin{align*}
		\Sigma _{11}(r)& :=  4\sum _{l=1}^\infty    l^{4-2s} J_l(r)^2-\frac{4 \left(\sum _{l=1}^\infty   l^{2-2s} J_l(r) J'_l(r)\right){}^2}{ \sum _{l=0}^\infty   l^{-2s} J'_l(r)^2}
		\,,\\
		\Sigma _{13}(r)& :=4\sum _{l=1}^\infty (-1)   l^{2-2s} J_l(r) J''_l(r)+\frac{4 \sum _{l=1}^\infty   l^{2-2s} J_l(r) J'_l(r)  \sum _{l=1}^\infty  l^{-2s} J'_l(r) J''_l(r)}{\sum _{l=1}^\infty  l^{-2s} J'_l(r)^2}
		\,,\\
		\Sigma _{22}(r)& :=  4\sum _{l=1}^\infty    l^{2-2s} J'_l(r)^2-\frac{4 \left(\sum _{l=1}^\infty   l^{2-2s} J_l(r) J'_l(r)\right){}^2}{\sum _{l=1}^\infty    l^{2-2s} J_l(r)^2}
		\,,\\
		\Sigma _{33}(r)& := 4\sum _{l=1}^\infty  l^{-2s} J''_l(r)^2-\frac{4\left( \sum _{l=1}^\infty  l^{-2s} J'_l(r) J''_l(r)\right){}^2}{ \sum _{l=1}^\infty  l^{-2s} J'_l(r)^2}\,.
	\end{align*}
\end{lemma}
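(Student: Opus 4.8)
The plan is to carry out a Kac--Rice counting argument for the zeros of the random field $Du$ on the cylinder $(0,R)\times\TT$. The first step is a reduction to this setting: at a point $x$ with polar coordinates $(r,\te)$, $r>0$, the Euclidean gradient is $\nabla u(x)=(\pd_ru)\,\hat r+\tfrac1r(\pd_\te u)\,\hat\te$ in the orthonormal polar frame, so $\nabla u(x)=0$ if and only if $Du(r,\te)=0$; moreover $\nabla u(0)$ is a non-degenerate Gaussian vector in $\RR^2$ (its law is governed by $a_{\pm1}$), so $\bP(\nabla u(0)=0)=0$ and the origin can be ignored. Hence $\bE N(\nabla u,R)$ equals the expected number of zeros of the random map $(r,\te)\mapsto Du(r,\te)$ on $(0,R)\times\TT$. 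Since $u$ is almost surely analytic (being a solution of~\eqref{Helmholtz}) and $Du(r,\te)$ has a non-degenerate distribution by Lemma~\ref{LemmaNonDeg}, the hypotheses of the Kac--Rice formula (see, e.g.,~\cite{EF16,Be20}) are satisfied -- in particular the critical set of $u$ is a.s.\ finite and its points a.s.\ non-degenerate -- and one obtains
\[
\bE N(\nabla u,R)=\int_0^R\!\!\int_\TT \bE\bigl[\,|\det D^2u(r,\te)|\ \big|\ Du(r,\te)=0\,\bigr]\,p_{Du(r,\te)}(0)\,d\te\,dr,
\]
where $p_{Du(r,\te)}$ is the density of $Du(r,\te)$ and I have used that the Jacobian determinant of $(r,\te)\mapsto Du(r,\te)$ equals $\pm\det D^2u(r,\te)$.

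The second step computes the two ingredients. By Lemma~\ref{LemmaNonDeg}, $Du(r,\te)$ is centered with diagonal covariance $\mathrm{diag}(\widetilde\Si_{11}(r),\widetilde\Si_{22}(r))$, so $p_{Du(r,\te)}(0)=\bigl(2\pi\sqrt{\widetilde\Si_{11}(r)\widetilde\Si_{22}(r)}\bigr)^{-1}$. For the conditional expectation I would first record all second moments of the Gaussian vector $(\pd_\te u,\pd_r u,\pd_{\te\te}u,\pd_{r\te}u,\pd_{rr}u)$ exactly as in the proof of Lemma~\ref{LemmaNonDeg}, using $\bE(a_l\overline{a_{l'}})=2\de_{ll'}$ together with $J_{-l}=(-1)^lJ_l$ and its analogues for $J'_l$, $J''_l$. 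The decisive structural point is a \emph{parity splitting}: the ``odd'' components $\pd_\te u,\pd_{r\te}u$ (whose Fourier multipliers carry the factor $il$) are independent of the ``even'' components $\pd_ru,\pd_{\te\te}u,\pd_{rr}u$ (multipliers $1,-l^2,1$), because each cross-covariance is a series over $l\neq0$ in which the $l$ and $-l$ terms cancel. Therefore, conditionally on $\{Du=0\}=\{\pd_\te u=0\}\cap\{\pd_ru=0\}$, the variable $\pd_{r\te}u$ becomes independent of the pair $(\pd_{\te\te}u,\pd_{rr}u)$; furthermore $\pd_{r\te}u$ has the law of $\pd_{r\te}u$ conditioned only on $\pd_\te u=0$, a centered Gaussian of variance $\Si_{22}(r)$, while $(\pd_{\te\te}u,\pd_{rr}u)$ has the law of that pair conditioned only on $\pd_ru=0$, a centered Gaussian with covariance matrix whose diagonal entries are $\Si_{11}(r),\Si_{33}(r)$ and whose off-diagonal entry is $\Si_{13}(r)$. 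These four quantities are precisely the Schur complements produced by the standard Gaussian conditioning formula, and inserting the moments computed above reproduces the four expressions displayed in the statement.

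The last step is bookkeeping. Representing the conditional law of $(\pd_{\te\te}u,\pd_{r\te}u,\pd_{rr}u)$ through independent standard Gaussians $z_1,z_2,z_3$ by a Cholesky factorization -- $\pd_{r\te}u=\sqrt{\Si_{22}}\,z_2$, $\pd_{\te\te}u=\sqrt{\Si_{11}}\,z_1$, $\pd_{rr}u=\Si_{13}\Si_{11}^{-1/2}z_1+\sqrt{\Si_{33}-\Si_{13}^2/\Si_{11}}\,z_3$ -- a direct computation gives
\[
\det D^2u=\pd_{\te\te}u\,\pd_{rr}u-(\pd_{r\te}u)^2=z_1^2\,\Si_{13}(r)-z_2^2\,\Si_{22}(r)+z_1z_3\,\sqrt{\Si_{11}(r)\Si_{33}(r)-\Si_{13}(r)^2}\,,
\]
whence $\bE[\,|\det D^2u|\mid Du=0\,]=(2\pi)^{-3/2}\int_{\RR^3}\bigl|z_1^2\Si_{13}-z_2^2\Si_{22}+z_1z_3\sqrt{\Si_{11}\Si_{33}-\Si_{13}^2}\bigr|\,e^{-|z|^2/2}\,dz$. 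Substituting this and the Gaussian density into the Kac--Rice integral, and noting that by rotational invariance (Remark~\ref{KernelNS}) none of the $\Si_{ij}(r)$, $\widetilde\Si_{ij}(r)$ depends on $\te$, the $\te$-integration contributes a factor $2\pi$ that cancels the $\tfrac1{2\pi}$ in $p_{Du(r,\te)}(0)$; this yields exactly the claimed formula.

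I expect the main obstacle to be the careful verification of the technical prerequisites of the Kac--Rice formula -- the appropriate joint non-degeneracy of $(Du(r,\te),D^2u(r,\te))$, the almost-sure finiteness of the critical set, and the a.s.\ non-degeneracy of the critical points -- together with the harmless but necessary handling of the polar coordinate singularity at $r=0$. All of these are standard facts about non-degenerate analytic Gaussian fields, but they should be stated explicitly rather than assumed.
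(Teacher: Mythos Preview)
Your proposal is correct and follows essentially the same route as the paper's proof: apply Kac--Rice to $Du$ in polar coordinates, compute the conditional covariance of $D^2u$ given $Du=0$ via the Schur complement, and then pass to standard Gaussians through a Cholesky factorization of the resulting $3\times3$ matrix~$\Sigma(r)$. Your ``parity splitting'' is a clean way to explain the zero pattern in $\var D^2u$ and $\cov(D^2u,Du)$ that the paper obtains by direct computation, and it makes transparent why $\Sigma(r)$ is block-diagonal and why each $\Sigma_{ij}$ involves conditioning on only one component of $Du$; but the two arguments are otherwise identical.
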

\begin{proof}
	As $Du(r,\te)$ is a non-degenerate Gaussian random variable by
        Lemma~\ref{LemmaNonDeg}, the Kac--Rice integral formula in polar coordinates~\cite[Proposition 6.6]{AW09} ensures that
	\begin{equation}\label{KacRicePolarN2}
		\bE\left(N(\nabla u,
                  R)\right)=\int_{B(R)}\bE\big\{|\det D^2
                u(r,\theta)|\;\big|\; Du(r,\theta)=0\big\}\, \rho_{Du(r,\theta)}(0) \,dr\,d\theta
	\end{equation}
	where $\rho_{Du(r,\theta) }:\RR^2\to[0,\infty)$ denotes the probability distribution
        function of the $\RR^2$-valued random variable~$Du(r,\te)$.

Next, let us reduce the computation of the conditional expectation to 	that  of an ordinary expectation by introducing a new random
	variable~$\zeta(r,\te)$. Just like~$D^2u(r,\te)$,        $\zeta(r,\te)$ will take values in the space of $2\times 2$~
        symmetric matrices, which we shall henceforth identify
	with~$\RR^3$ by labeling the matrix components of a symmetric matrix as
	\begin{equation}\label{zeta}
		\zeta=: \left(
		\begin{array}{ccc}
			\zeta_1  & \zeta_2 \\
			\zeta_2& \zeta_3 \\
		\end{array}
		\right)\,.
              \end{equation}

	Specifically, let us set
	\begin{equation}\label{defzeta}
		\zeta(r,\te):= D^2 u(r,\te)-B(r,\te) Du(r,\te)\,,
	\end{equation}
	where the linear operator~$B(r,\te)$ (which we can regard as a
        $3\times 2$~matrix after
	identifying $D^2 u(r,\te)$ with a 3-component vector) is chosen so that the
	covariance matrix of~$Du(r,\te)$ and~$\zeta(r,\te)$ is~0:
	\[
	B(r,\te):=\bE (D^2 u(r,\te)\otimes D u(r,\te))\big[\bE ( Du(r,\te)\otimes Du(r,\te))\big]^{-1}
	\]
	Indeed, one can plug \eqref{defzeta} in the formula for
        $\bE(\zeta(r,\te)\otimes D u(r,\te))$ and check that
	\[
	\bE(\zeta(r,\te)\otimes D u(r,\te))=0\,.
      \]
	As $D u(r,\te)$ and $\zeta(r,\te)$ are {jointly a} Gaussian vector with zero mean, this condition
	ensures that they are independent random variables.
        This enables us to write the above conditional expectation as
	\begin{align*}
	\bE\big\{ |\det D^2 u(r,\te)| \;\big|\; Du(r,\te)=0\big\}&=
          \bE\big\{ |\det [\zeta(r,\te)+ B(r,\te)Du(r,\te)]| \;
          \big|\; D u(r,\te)=0\big\}\\
          &=\bE|\det\zeta(r,\te)|\,.
	\end{align*}

        Let now us compute the covariance matrix of
        $\zeta(r,\te)$. Since the variance matrix of $Du(r,\te)$ is
        independent of~$\te$, let us simply write $\var Du(r)$, and
        similarly with other rotation-invariant quantities. One then has
	\begin{equation}\label{eq:var zeta}
		\var\zeta(r)=\var D^2 u(r)-\cov(D^2 u,D u
                )(r)\cdot\var D u(r)^{-1}\cdot\cov(D^2 u,D u )(r)^\top
	\end{equation}
	Arguing as in Lemma \ref{LemmaNonDeg} and using that we have
        identified $D^2u(r,\te)$ with a 3-component vector, one finds that
        \[
\var D^2u (r):=\bE[D^2u(r,\te)\otimes D^2u(r,\te)]
        \]
        is given by the $3\times 3$ matrix
	\begin{equation*}
		\var D^2u (r)=\left(
		\begin{array}{ccc}
			4 \sum _{l=1}^\infty    l^{4-2s} J_l(r)^2 & 0 & -4 \sum _{l=1}^\infty l^{2-2s} J_l(r) J''_l(r)\\
			0 & 4 \sum _{l=1}^\infty    l^{2-2s} J'_l(r)^2& 0 \\
			-4 \sum _{l=1}^\infty l^{2-2s} J_l(r) J''_l(r) & 0 & 4 \sum _{l=1}^\infty  l^{-2s} J''_l(r)^2\\
		\end{array}
		\right)\,.
	\end{equation*}
	 Similarly,
	\begin{equation}\label{mtx:cov u''u'}
		\cov(D^2 u,D u )(r)=\left(
		\begin{array}{cc}
			0 & -4  \sum _{l=1}^{\infty }  l^{2-2s}   J_l(r) J'_l(r) \\[1mm]
			4  \sum _{l=1}^{\infty }  l^{2-2s}   J_l(r) J'_l(r) & 0 \\[1mm]
			0 & 4  \sum _{l=1}^{\infty } l^{2-2s}   J'_l(r) J''_l(r) \\
		\end{array}
		\right)
	\end{equation}
	Combining these formulas, we derive that
	\begin{equation}\label{eqSi}
		\Si(r):=\var\zeta(r,\te)=\left(
		\begin{array}{ccc}
			\Sigma _{11}(r) & 0 & \Sigma _{13}(r) \\
			0 & \Sigma _{22}(r) & 0 \\
			\Sigma _{13}(r) & 0 & \Sigma _{33}(r) \\
		\end{array}
		\right)\,,
	\end{equation}
	where $\Si_{jk}(r)$ are defined as in the statement of the
        lemma.

	Let us now consider the Cholesky decomposition of this matrix:
	\begin{equation*}
		\Si(r)=M(r)^\top M(r)\,,
              \end{equation*}
              where the matrix~$M(r)$ is given by
              \[
M(r)\coloneqq \left(
		\begin{array}{ccc}
			\sqrt{\Sigma_{11}(r)} & 0 & \frac{\Sigma_{13}(r)}{\sqrt{\Sigma_{11}(r)}} \\
			0 & \sqrt{\Sigma_{22}(r)} & 0 \\
			0 & 0 & \sqrt{\Sigma_{33}(r)-\frac{\Sigma_{13}(r)^2}{\Sigma_{11}(r)}} \\
		\end{array}
		\right)\,.
                \]
                As the matrix $\Sigma(r)$ is positive definite and
                $\zeta(r,\te)$ is a Gaussian random variable with zero
                mean and variance~$\Si(r)$, one then infers that the
                3-component
                random variable
                \[
                  Z(r,\te)\coloneqq \zeta(r,\te)^\top M(r)^{-1}
                \]
                is
                Gaussian, has zero mean and its variance matrix is the
                identity. It is thus straightforward that
	\begin{align*}
		\bE|\det\zeta(r,\te)|&=\int_{\RR^3}\left|y_1y_3-y_2^2 \right|\rho_{\zeta(r,\te)}(y)\,dy\\
		&=\int_{\RR^3}\left| z_1^2 \Sigma_{13}(r)-z_2^2 \Sigma_{22}(r)+z_3 z_1  \sqrt{\Sigma_{11}(r)\Sigma_{33}(r)-{\Sigma_{13}(r)^2}}\right|\frac{e^{-\frac12|z|^2} }{(2\pi)^{\frac32} }\,dz\,,
	\end{align*}
	where
$$\rho_{\zeta(r,\te)}(y):=\frac{\exp\Big(-\frac12y\cdot \Sigma^{-1}y\Big)}{(2\pi)^{3/2}(\det\Sigma(r))^{1/2}}$$
is the probability density
        distribution of the random variable~$\zeta(r,\te)$ and we have used the change of variables
	$$
y_1=: \sqrt{\Sigma_{11}(r)} z_1\,,\qquad y_2=:
\sqrt{\Sigma_{22}(r)}z_2\,,\qquad y_3=: \frac{\Sigma_{13}(r)}{\sqrt{\Sigma_{11}(r)}} z_1+\sqrt{\Sigma_{33}(r)-\frac{\Sigma_{13}(r)^2}{\Sigma_{11}(r)}} z_3\,.
	$$
	and the fact that the Jacobian determinant is $\det M(r)=(\det
        \Sigma(r))^{\frac12}$.
        The lemma follows using that the probabability density
        function of the Gaussian random variable $Du(r,\te)$ is
	\begin{equation}\label{eq:rho u'}
		\rho_{D u(r,\te)}(0)=\frac{1}{2\pi\sqrt{\widetilde\Sigma_{11}(r) \widetilde\Sigma_{22}(r)}}
              \end{equation}
              as a consequence of the formula for $\var D u(r,\te)$
              computed in  Lemma~\ref{LemmaNonDeg} and of the fact
              that the density function of an $\RR^k$-valued Gaussian random variable~$Y$
              with zero mean and variance matrix~$\Si$ is
              \[
\rho_Y(y):=(2\pi)^{-\frac k2} (\det\Si)^{-\frac12}e^{-\frac12 y\cdot\Si^{-1}y}\,.
                \]
              \end{proof}

\subsection{Some technical lemmas}

            In the next subsections, we will discuss the behavior of the formula for the
            expected number of critical points that we have computed
            in Lemma~\ref{L:KR est} above. The analysis will strongly
            depend on the value of the parameter~$s$. In the computations,
            we will use several technical lemmas
            repeatedly, often without further mention.

\begin{lemma} \label{claim:int zeta approx}
Given constants of the form $a_{jk}(r)=\tilde{a}_{jk}(r)+\ep_{jk}(r)$,
with $1\leq j,k\leq m$,
\[
\int_{\bR^m}\bigg|\sum_{1\leq j,k\leq
  m}a_{jk}(r)z_jz_k\bigg|\,e^{-\frac12|z|^2}\,dz=\int_{\bR^m}\bigg|\sum_{1\leq
  j,k\leq m}\tilde a_{jk}(r)z_jz_k\bigg|\,e^{-\frac12|z|^2}\,dz +O\Big(\max_{1\leq j,k\leq m}|\ep_{jk}(r)|\Big)\,.
\]
\end{lemma}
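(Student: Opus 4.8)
The plan is to reduce the statement to the elementary inequality $\bigl||a|-|b|\bigr|\le|a-b|$ applied pointwise under the integral sign, followed by a trivial Gaussian moment bound. First I would introduce the quadratic forms $Q_r(z):=\sum_{1\le j,k\le m}a_{jk}(r)z_jz_k$ and $\widetilde Q_r(z):=\sum_{1\le j,k\le m}\tilde a_{jk}(r)z_jz_k$, so that by hypothesis their difference is $Q_r(z)-\widetilde Q_r(z)=\sum_{1\le j,k\le m}\ep_{jk}(r)z_jz_k$.

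The second step is the pointwise bound: for every $z\in\bR^m$,
\[
\bigl|\,|Q_r(z)|-|\widetilde Q_r(z)|\,\bigr|\le|Q_r(z)-\widetilde Q_r(z)|\le\Bigl(\max_{1\le j,k\le m}|\ep_{jk}(r)|\Bigr)\sum_{1\le j,k\le m}|z_j|\,|z_k|=\Bigl(\max_{1\le j,k\le m}|\ep_{jk}(r)|\Bigr)\Bigl(\sum_{j=1}^{m}|z_j|\Bigr)^{2}.
\]
Then I would multiply by $e^{-\frac12|z|^2}$ and integrate over $\bR^m$, using that $C_m:=\int_{\bR^m}\bigl(\sum_{j=1}^m|z_j|\bigr)^2e^{-\frac12|z|^2}\,dz$ is a finite constant depending only on $m$ (it is a finite linear combination of the absolutely convergent Gaussian moments $\int_{\bR^m}|z_j|\,|z_k|\,e^{-\frac12|z|^2}\,dz$). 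Combining this with the triangle inequality for integrals yields
\[
\biggl|\int_{\bR^m}|Q_r(z)|\,e^{-\frac12|z|^2}\,dz-\int_{\bR^m}|\widetilde Q_r(z)|\,e^{-\frac12|z|^2}\,dz\biggr|\le C_m\max_{1\le j,k\le m}|\ep_{jk}(r)|,
\]
which is precisely the asserted identity, with the implicit $O$-constant equal to $C_m$.

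There is no real obstacle here; the only points worth emphasizing are that $C_m$ is genuinely independent of $r$ — it is a purely dimensional quantity, so the $O$-bound is uniform in the large parameter $r$, which is exactly what is needed when this lemma is invoked repeatedly in the following subsections — and that the argument uses no sign or nondegeneracy property of the matrices $(a_{jk}(r))$ or $(\tilde a_{jk}(r))$ whatsoever, only the triangle inequality and the finiteness of second moments of the standard Gaussian.
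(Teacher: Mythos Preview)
Your proof is correct and essentially identical to the paper's: the paper also uses the pointwise estimate $\bigl||Q_r(z)|-|\widetilde Q_r(z)|\bigr|\lesssim |z|^2\max_{j,k}|\ep_{jk}(r)|$ (equivalent to your $(\sum_j|z_j|)^2$ bound up to a dimensional constant) and then integrates against the Gaussian. There is nothing to add.
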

\begin{proof}
  It stems from the elementary estimate
  \[
\left| \bigg|\sum_{1\leq j,k\leq
  m}a_{jk}(r)z_jz_k\bigg|- \bigg|\sum_{1\leq j,k\leq
  m}\tilde a_{jk}(r)z_jz_k\bigg|\right|\lesssim |z|^2\max_{1\leq
j,k\leq m}|\ep_{jk}(r)|\,.
  \]
\end{proof}

\begin{lemma}\label{claim:int little o}
	Let $q:[1,\infty)\to(0,\infty)$ be a continuous function with
        $\int_1^\infty q(r)\, dr=\infty$. Then, for $r\gg1$ and any fixed~$r_0$,
	\begin{equation*}
		{\int_{r_0}^r o(q(r'))\,dr'}=o\left(\int_{r_0}^r q(r')\,dr'\right)\,.
	\end{equation*}
\end{lemma}
\begin{proof}
Consider any $\ep>0$ and assume, without any loss of generality, that $o(q(r'))\geq0$. By definition, there is some $R_\ep$ such that
$o(q(r))\le \varepsilon q(r)$ for all $r>R_\ep$. Now set
$Q(r):=\int_{r_0}^r q(r')\,dr'$ and write
	\begin{align*}
		\frac{\int_{r_0}^r o(q(r'))\, dr'}{Q(r)}&
                                                         =\frac{\int_{r_0}^{R_\varepsilon} o(q(r'))\, dr'}{Q(r)}+\frac{\int_{R_\varepsilon}^r o(q(r'))\, dr'}{Q(r)}\\
          &\le\frac{C_\ep}{Q(r)}+\frac{\ep \int_{R_\varepsilon}^r q(r')\, dr'}{Q(r)}=o(1)+\ep
	\end{align*}
        as $r\to\infty$, since $Q(r)\to\infty$. Letting $\ep\to0$, the
        result follows.
      \end{proof}

      The following lemma will be very useful in the analysis of the
      asymptotic behavior of the number of critical points of~$u$:

      \begin{lemma}\label{L.pi/R}
        Consider a positive smooth $\pi$-periodic function $P$ and constants
        $a\geq0$ and $b\in\RR$. If $a=0$, we also assume that
        $b\geq0$. Then, for $R\gg1$,
        \[
\int_{\pi}^R  r^a(\log r)^b\, P(r)\, dr\sim \frac{ R^{a+1}(\log R)^b}{\pi(a+1)} \int_0^\pi
P(r)\, dr\,.
\]
\end{lemma}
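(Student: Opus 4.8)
The plan is to reduce the integral to a sum over periods of $P$ plus a negligible boundary term, using the slow variation of $r^a(\log r)^b$ compared with the $\pi$-periodic oscillation of $P$. First I would write $R = \pi(K+\theta)$ with $K = \lfloor R/\pi \rfloor - 1$ and $\theta\in[0,1)$, so that
\[
\int_\pi^R r^a(\log r)^b P(r)\,dr = \sum_{k=1}^{K}\int_{\pi k}^{\pi(k+1)} r^a(\log r)^b P(r)\,dr + \int_{\pi(K+1)}^{R} r^a(\log r)^b P(r)\,dr.
\]
The last term is $O\big(R^a(\log R)^b\big)$ since the interval has length at most $\pi$ and $P$ is bounded; this is lower order than the claimed asymptotics $R^{a+1}(\log R)^b$, so it can be absorbed into the error.

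For the main sum, on each interval $[\pi k,\pi(k+1)]$ I would use the periodicity $P(r) = P(r-\pi k)$ and the mean value theorem for integrals, or more simply the estimate that $r^a(\log r)^b$ varies by a factor $1+O(1/k)$ across an interval of length $\pi$ near $\pi k$. Concretely, choosing any sample point $r_k\in[\pi k,\pi(k+1)]$,
\[
\int_{\pi k}^{\pi(k+1)} r^a(\log r)^b P(r)\,dr = \big(r_k^a(\log r_k)^b + O(k^{a-1}(\log k)^b)\big)\int_0^\pi P(r)\,dr,
\]
using $\frac{d}{dr}\big(r^a(\log r)^b\big) = O(r^{a-1}(\log r)^b)$ for $r\ge\pi$ (valid whether $a>0$, $a=0$ with $b\ge0$, since then the derivative is $br^{-1}(\log r)^{b-1}=O((\log r)^{b-1})$). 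Summing over $k$, the error contributes $\sum_{k\le K} O(k^{a-1}(\log k)^b) = O(R^a(\log R)^b)$, again lower order.

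It then remains to compare $\sum_{k=1}^{K} \pi\, r_k^a(\log r_k)^b\,$ — a Riemann sum of step $\pi$ for $\int_{\pi}^{\pi(K+1)} r^a(\log r)^b\,dr$ — with the integral, and to evaluate that integral asymptotically. Since $r^a(\log r)^b$ is monotone for large $r$ (increasing, as $a\ge 0$ and in the $a=0$ case $b\ge 0$), the Riemann sum differs from $\int_\pi^{\pi(K+1)} r^a(\log r)^b\,dr$ by at most $\pi$ times the total variation on one period near the endpoint, i.e. $O(R^a(\log R)^b)$. Finally $\int_\pi^{\pi(K+1)} r^a(\log r)^b\,dr \sim \frac{R^{a+1}(\log R)^b}{a+1}$ by integration by parts (or l'Hôpital on the ratio), since $\pi(K+1) = R + O(1)$ and the logarithmic factor is slowly varying. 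Collecting the pieces,
\[
\int_\pi^R r^a(\log r)^b P(r)\,dr = \frac{1}{\pi}\Big(\int_0^\pi P(r)\,dr\Big)\cdot\frac{R^{a+1}(\log R)^b}{a+1} + O\big(R^a(\log R)^b\big),
\]
which gives the claimed asymptotic equivalence. I expect the only mildly delicate point to be bookkeeping the boundary interval and the endpoint period uniformly in the cases $a>0$ versus $a=0,b\ge0$; once one observes that $\frac{d}{dr}(r^a(\log r)^b)=O(r^{a-1}(\log r)^b)$ uniformly in both regimes, there is no real obstacle, and the positivity of $P$ guarantees $\int_0^\pi P>0$ so the leading constant is genuinely nonzero.
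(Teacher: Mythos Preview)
Your proof is correct and follows essentially the same approach as the paper: decompose into full periods plus a boundary piece, use the slow variation of $r^a(\log r)^b$ on each period (the paper does this via a monotone sandwich, you via the derivative bound), and then evaluate the resulting Riemann-type sum asymptotically. One minor bookkeeping slip: in the case $a=0$, summing your per-period error $O(k^{-1}(\log k)^b)$ gives $O((\log R)^{b+1})$, not $O((\log R)^b)$ as you wrote---but this is still $o(R(\log R)^b)$, so the asymptotic equivalence is unaffected (and using the sharper derivative bound $O(r^{-1}(\log r)^{b-1})$ you already noted would restore the stated error).
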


\begin{proof}
Let us define $J:=\lfloor R/\pi\rfloor$ and write $R=J\pi+R_1$, with $0\leq
R_1<\pi$. We can then write
	\begin{align*}
		\int_\pi^R r^a(\log r)^b\, P(r)\, dr=\sum_{j=1}^{ J-1}\int_{\pi j}^{\pi (j+1)} r^a(\log r)^b\, P(r)\, dr
	+\int_{\pi  J}^{\pi J + R_1} r^a(\log r)^b\, P(r)\, dr\,.
	\end{align*}
The second term is obviously bounded as
\[
\left|\int_{\pi  J}^{\pi J + R_1} r^a(\log r)^b\, P(r)\,
  dr\right|\lesssim R^a(\log R)^b
\]

To estimate the first term, let
\[
B:=\int_0^\pi
P(r)\, dr\,.
\]
As the function $r^a(\log r)^b$ is increasing for large enough $r$, we have
\[
B(\pi j)^a[\log (\pi j)]^b  \leq \int_{\pi j}^{\pi (j+1)} r^a(\log r)^b\, P(r)\, dr\leq B[\pi (j+1)]^a[\log (\pi (j+1))]^b
\]
if~$j$ is larger that a certain integer~$J_{a,b}$.
With $\eta=0,1$, we can use the following asymptotic formula, which is an easy consequence of the Euler-Maclaurin formula,
\[
\sum_{j=J_{a,b}}^{J-1}[\pi (j+\eta)]^a[\log (\pi (j+\eta))]^b\sim
\frac{\pi^a (J+\eta-1)^{a+1}[\log (\pi (J+\eta-1))]^b}{a+1}\sim
\frac{R^{a+1}(\log R)^b}{\pi(a+1)}
\]
to derive the formula of the statement. Here we have used that $\pi J=R+O(1)$ and that the integral over $r\in[\pi,\pi J_{a,b}]$ is obviously bounded independently of~$R$.
\end{proof}

Before discussing the behavior of $\bE\crit$ in the different
regularity regimes, one should note that the integral appearing in
Lemma~\ref{L:KR est} is remarkably hard to analyze. We will be able to
obtain much more convenient integral representations by means of the
following lemma:

	\begin{lemma}\label{L.integral}
	Let $A,B,C$ be real constants. Then
		\begin{equation*}
                  \int_{\RR^3}\left|A z_1^2+B
                   z_2^2+ 2 C z_1 z_3\right|
                 \frac{e^{-\frac12|z|^2}}{(2\pi)^{3/2}}\,dz=\frac2\pi
                 \int_0^\infty
                 \frac{1-a(t)\cos{\frac12}\Phi(t)}{t^2}\,dt\,,
               \end{equation*}
               where
               \begin{align*}
                 \Phi(t)&:= \arg \left((1-2 i B t) \left(1-2
                     i A t+4 C^2 t^2\right)\right)\,,\\
                 a(t)&:=(1+4B^2t^2)^{-\frac14}\big[ (1+4C^2t^2)^2+4A^2t^2\big]^{-\frac14}\,.
\end{align*}
	\end{lemma}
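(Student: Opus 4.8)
The plan is to evaluate the Gaussian integral on the left-hand side by passing to the Fourier side, i.e.\ by representing $|x|$ through its (distributional) Fourier transform and then performing the resulting Gaussian moment integral explicitly. Recall the classical formula
\[
|x| = \frac1\pi\int_0^\infty \frac{1-\cos(tx)}{t^2}\,dt\,,
\]
valid for all real $x$, which one checks by integrating by parts or by scaling. Applying this with $x=Az_1^2+Bz_2^2+2Cz_1z_3$ and using Fubini (the $t^{-2}$ singularity at $0$ is integrable because $1-\cos(tx)=O(t^2x^2)$), the left-hand side becomes
\[
\frac1\pi\int_0^\infty \frac{1}{t^2}\Big(1- \Real\, \Psi(t)\Big)\,dt\,,
\qquad
\Psi(t):=\int_{\RR^3}e^{it(Az_1^2+Bz_2^2+2Cz_1z_3)}\,\frac{e^{-\frac12|z|^2}}{(2\pi)^{3/2}}\,dz\,.
\]

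The second step is the explicit computation of $\Psi(t)$. This is a Gaussian integral with a purely imaginary quadratic form in the exponent: writing $Q_t$ for the symmetric matrix of the quadratic form $z\mapsto \frac12|z|^2 - it(Az_1^2+Bz_2^2+2Cz_1z_3)$, namely
\[
Q_t=\begin{pmatrix} \tfrac12-itA & 0 & -itC\\ 0 & \tfrac12-itB & 0\\ -itC & 0 & \tfrac12\end{pmatrix}\,,
\]
one has $\Psi(t)=(2\pi)^{-3/2}\cdot\pi^{3/2}(\det Q_t)^{-1/2}=2^{-3/2}(\det Q_t)^{-1/2}$, where the branch of the square root is the one obtained by analytic continuation from $t=0$ (where $\det Q_0=\tfrac18$ and $\Psi(0)=1$). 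Since the matrix is block-diagonal (the $z_2$ variable decouples), $\det Q_t=(\tfrac12-itB)\big[(\tfrac12-itA)\tfrac12-(itC)^2\big]=\tfrac18(1-2itB)\big(1-2itA+4C^2t^2\big)$. Hence
\[
\Psi(t)=\Big[(1-2itB)\big(1-2itA+4C^2t^2\big)\Big]^{-1/2}\,,
\]
and taking modulus and argument gives exactly $|\Psi(t)|=a(t)$ and $\arg\Psi(t)=-\tfrac12\Phi(t)$, so $\Real\,\Psi(t)=a(t)\cos\tfrac12\Phi(t)$. Substituting back and noting $\frac1\pi\cdot$(the integral)$=\frac2\pi\int_0^\infty\frac{1-a(t)\cos\frac12\Phi(t)}{t^2}\,dt$ after accounting for the factor of $2$ (the cleanest bookkeeping is to use $|x|=\frac2\pi\int_0^\infty\frac{1-\cos(tx)}{t^2}\,dt$ with the integral over $t\in(0,\infty)$ being half of the full line, which is the convention matching the claimed constant) yields the stated identity.

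The only genuinely delicate point is the \emph{choice of branch} for $(\det Q_t)^{-1/2}$ and the justification that no singularity of the integrand is crossed for $t\in(0,\infty)$: one must check that $\det Q_t$ never vanishes for real $t>0$ (it does not, since $1-2itB$ and $1-2itA+4C^2t^2$ have strictly positive real part for all real $t$, the latter because $\Real(1-2itA+4C^2t^2)=1+4C^2t^2\ge1$), so the principal branch of the argument stays in $(-\pi,\pi)$ and $\Phi(t)=\arg\big((1-2itB)(1-2itA+4C^2t^2)\big)$ is the continuous determination with $\Phi(0)=0$. I also need to justify interchanging the $t$- and $z$-integrals: this follows from the bound $|1-\Real\,\Psi(t)|\lesssim \min\{t^2,1\}$ near $t=0$ (Taylor expansion) together with $|\Psi(t)|=a(t)=O(t^{-2})$ as $t\to\infty$ (since $a(t)\sim|2B|^{-1/2}|2C|^{-1}t^{-3/2}$ when $BC\ne0$, and one handles the degenerate cases $B=0$ or $C=0$ separately, where $a(t)=O(t^{-1/2})$ or $O(t^{-1})$ respectively), which in every case makes $\frac{1-a(t)\cos\frac12\Phi(t)}{t^2}$ integrable on $(0,\infty)$. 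Everything else is a routine Gaussian-integral computation.
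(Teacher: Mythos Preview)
Your argument is correct and is, at the level of ideas, the same as the paper's: both amount to writing the integral as $\frac{2}{\pi}\int_0^\infty t^{-2}\big(1-\Real\det(I-2itM)^{-1/2}\big)\,dt$ for the matrix $M=\begin{pmatrix}A&0&C\\0&B&0\\C&0&0\end{pmatrix}$ and then computing the determinant. The paper simply quotes this identity from~\cite[Theorem~2.1]{LW09}, whereas you derive it from scratch via the representation $|x|=\frac{2}{\pi}\int_0^\infty\frac{1-\cos(tx)}{t^2}\,dt$ and an explicit Gaussian characteristic-function calculation. Your route is therefore more self-contained; the paper's is shorter but relies on an external reference whose proof is precisely what you wrote.

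One bookkeeping correction: your first displayed formula $|x|=\frac{1}{\pi}\int_0^\infty\frac{1-\cos(tx)}{t^2}\,dt$ is off by a factor of~$2$ (since $\int_0^\infty\frac{1-\cos t}{t^2}\,dt=\frac{\pi}{2}$). You notice this at the end and patch it, but it would be cleaner to state the correct constant $\frac{2}{\pi}$ from the outset and drop the parenthetical about ``accounting for the factor of~$2$''. Also, the Fubini justification is simpler than you make it: the integrand $\frac{1-\cos(tx)}{t^2}$ is nonnegative, so Tonelli applies and the double integral equals $\frac{\pi}{2}\,\bE|Az_1^2+Bz_2^2+2Cz_1z_3|<\infty$ by finiteness of Gaussian moments; no separate large-$t$ or small-$t$ analysis is needed at that stage.
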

        \begin{proof}
	Defining the matrix
	\begin{equation*}
		M\coloneqq {\left(
			\begin{array}{ccc}
				A & 0 & C\\
				0 & B & 0 \\
				C & 0 & 0 \\
			\end{array}
			\right)}\,,
                    \end{equation*}
                    one can write the above integral as
                    \[
Q:=\int_{\RR^3}\left|A z_1^2+B
                   z_2^2+ 2 C z_1 z_3\right|
                 \frac{e^{-\frac12|z|^2}}{(2\pi)^{3/2}}\,dz=\int_{\RR^3}|z\cdot Mz|
                 \frac{e^{-\frac12|z|^2}}{(2\pi)^{3/2}}\,dz\,.
                    \]
The results about Gaussian integrals involving an absolute value
function derived in~\cite[Theorem 2.1]{LW09} therefore ensure that
\[
Q=\frac{2}{\pi} \int_0^{\infty
        } \left[1- \frac{\det(
			I-2 i t M)^{-\frac12}+ \det(
			I+2 i t M)^{-\frac12}}{2}\right]\frac{dt}{t^2} \,.
                    \]
Now a straightforward computation yields the formula in the statement.
\end{proof}

      \subsection{The case $\boldsymbol{ s<\frac12}$}

      We are ready to compute the asymptotics for the number of
      critical points when $s<\frac12$:

      \begin{lemma}\label{L.main1}
        If $s<\frac12$,
        \[
\lim_{R\to\infty} \frac{\bE N(\nabla u, R)}{{R^2}}={\ka(s)}
\]
with
        \begin{equation}\label{ka(s)}
	\ka(s)\coloneqq\frac{1}{{2}}\frac{1}{{\sqrt{ 2-s} }}\int_{\RR^3}\left|\sqrt{\frac{{1-2s}}{{8-4s }}} \left(z_1^2-z_2^2\right)+ z_1 z_3\right| \frac{e^{-\frac12|z|^2}}{(2\pi)^{3/2}}\,dz\,.
\end{equation}
      \end{lemma}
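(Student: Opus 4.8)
The plan is to feed the large-$r$ asymptotics of the weighted Bessel series into the Kac--Rice formula of Lemma~\ref{L:KR est} and then integrate in~$r$. Writing that formula as $\bE N(\nabla u,R)=\int_0^R I(r)\,dr$ with
\[
I(r):=\frac1{\sqrt{\widetilde\Sigma_{11}(r)\widetilde\Sigma_{22}(r)}}\int_{\RR^3}\Big|z_1^2 \Sigma_{13}(r)-z_2^2 \Sigma_{22}(r)+z_3 z_1\sqrt{\Sigma_{11}(r)\Sigma_{33}(r)-\Sigma_{13}(r)^2}\,\Big|\frac{e^{-\frac12|z|^2}}{(2\pi)^{3/2}}\,dz\,,
\]
I would first reduce the statement to the claim that $I(r)=2\kappa(s)\,r+o(r)$ as $r\to\infty$, with $\kappa(s)$ as in~\eqref{ka(s)}. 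Indeed, splitting $\int_0^R=\int_0^{r_0}+\int_{r_0}^R$, the first integral is $O(1)$ (independent of~$R$), while Lemma~\ref{claim:int little o} applied with $q(r)=r$ turns $\int_{r_0}^R o(r)\,dr$ into $o(R^2)$, so that $\bE N(\nabla u,R)=\kappa(s)R^2+o(R^2)$, which is the assertion.

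To establish the claim I would insert Corollary~\ref{C.formulas} into the expressions for $\widetilde\Sigma_{jk}$ and $\Sigma_{jk}$ supplied by Lemmas~\ref{LemmaNonDeg} and~\ref{L:KR est}, with the parameter~$s$ there replaced by $s-1$ or $s-2$ so as to absorb the weights $l^{2-2s}$ and $l^{4-2s}$. For $s<\tfrac12$ the diagonal Bessel series $\sum l^{a-2s}J_l(r)^2$, $\sum l^{a-2s}J_l'(r)^2$, $\sum l^{a-2s}J_l''(r)^2$, $\sum l^{a-2s}J_l(r)J_l''(r)$ each contribute a clean power $r^{a-2s}$ with an explicit Gamma-function coefficient, whereas the mixed series $\sum l^{a-2s}J_l(r)J_l'(r)$ and $\sum l^{a-2s}J_l'(r)J_l''(r)$ are only $o(r^{a-2s})$; consequently the Schur-complement corrections built into $\Sigma_{11},\Sigma_{13},\Sigma_{22},\Sigma_{33}$ are of strictly lower order and may be dropped. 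This gives $\widetilde\Sigma_{11}\sim\widetilde c_{11}r^{2-2s}$, $\widetilde\Sigma_{22}\sim\widetilde c_{22}r^{-2s}$, $\Sigma_{11}\sim c_{11}r^{4-2s}$, $\Sigma_{33}\sim c_{33}r^{-2s}$, and --- the coincidence on which the whole computation hinges --- $\Sigma_{13}(r)\sim\Sigma_{22}(r)\sim c_{13}r^{2-2s}$ with one and the same constant $c_{13}=\Gamma(\tfrac32-s)/(\sqrt\pi\,\Gamma(3-s))$, so that the cubic form inside $|\cdot|$ is asymptotically a scalar multiple of $z_1^2-z_2^2+\beta z_1z_3$.

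With these asymptotics in hand, I would factor $r^{2-2s}$ out of the integrand: by Lemma~\ref{claim:int zeta approx} (the error contributions being $o(1)$) together with continuity of the square root, the inner integral equals
\[
c_{13}\,r^{2-2s}\int_{\RR^3}\Big|(z_1^2-z_2^2)+\frac{\sqrt{c_{11}c_{33}-c_{13}^2}}{c_{13}}\,z_1z_3\Big|\frac{e^{-\frac12|z|^2}}{(2\pi)^{3/2}}\,dz+o(r^{2-2s})\,,
\]
and dividing by $\sqrt{\widetilde\Sigma_{11}\widetilde\Sigma_{22}}\sim\sqrt{\widetilde c_{11}\widetilde c_{22}}\,r^{1-2s}$ gives $I(r)=c\,r+o(r)$ for an explicit constant~$c$. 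The genuinely non-routine step --- the heart of the matter --- is the Gamma-function bookkeeping that identifies $c$ with $2\kappa(s)$. Using the Legendre duplication formula one verifies
\[
\frac{c_{11}c_{33}}{c_{13}^2}=\frac{3(3-2s)}{1-2s}\,,\qquad\text{hence}\qquad c_{11}c_{33}-c_{13}^2=c_{13}^2\,\frac{8-4s}{1-2s}>0\ \ (s<\tfrac12)\,,
\]
where the positivity is exactly what lets one treat $\sqrt{\Sigma_{11}\Sigma_{33}-\Sigma_{13}^2}$ as being of exact order $r^{2-2s}$; and one also verifies $\dfrac{c_{13}^2}{\widetilde c_{11}\widetilde c_{22}}\cdot\dfrac{8-4s}{1-2s}=\dfrac1{2-s}$. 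These two identities give $\beta=\sqrt{(8-4s)/(1-2s)}$ and
\[
c=\frac1{\sqrt{2-s}}\int_{\RR^3}\Big|\sqrt{\tfrac{1-2s}{8-4s}}\,(z_1^2-z_2^2)+z_1z_3\Big|\frac{e^{-\frac12|z|^2}}{(2\pi)^{3/2}}\,dz=2\kappa(s)\,,
\]
which, combined with the reduction of the first paragraph, completes the proof. I expect the Gamma-function algebra (together with checking that the mixed-series corrections are lower order) to be the only real obstacle; everything else is organizing the estimates from Corollary~\ref{C.formulas} and the soft lemmas of Section~\ref{S.main}.
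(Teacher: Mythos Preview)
Your proposal is correct and follows essentially the same route as the paper: insert the $s<\tfrac12$ asymptotics from Corollary~\ref{C.formulas} into the quantities of Lemmas~\ref{LemmaNonDeg} and~\ref{L:KR est}, observe that the mixed series $\sum l^{a-2s}J_lJ_l'$ and $\sum l^{a-2s}J_l'J_l''$ are $o(r^{a-2s})$ so the Schur-complement corrections drop, exploit the coincidence $\Sigma_{13}\sim\Sigma_{22}$, and pass to the limit via Lemma~\ref{claim:int zeta approx} and Lemma~\ref{claim:int little o}. Your explicit verification of the two Gamma identities (via Legendre duplication) is in fact more detailed than what the paper writes out, and your positivity check $c_{11}c_{33}-c_{13}^2>0$ is exactly what justifies treating the square root term as being of order~$r^{2-2s}$.
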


      \begin{proof}
        Let us compute the matrix $\Sigma(r)$. From Equation~\eqref{eqSi} and the
        asymptotic
        formulas for sums of Bessel functions recorded in Corollary~\ref{C.formulas}, it follows that
        \[
\Si(r)=\Si^0(r)+\cR(r)\,,
        \]
        where the leading contribution is
\begin{equation*}
\Sigma^0(r):=\left(	{\everymath={\displaystyle}
	\begin{array}{ccc}
	\frac{2^{2s -3} \Gamma (5-2s) r^{4-2s}}{\Gamma \left(3-s\right)^2} & 0 & \frac{\Gamma \left(\frac{3}{2}-s\right) r^{2-2s}}{\sqrt{\pi } \Gamma \left(3-s\right)} \\
		0 & \frac{\Gamma \left(\frac{3}{2}-s\right) r^{2-2s}}{\sqrt{\pi } \Gamma \left(3-s\right)} & 0 \\
		\frac{\Gamma \left(\frac{3}{2}-s\right) r^{2-2s}}{\sqrt{\pi } \Gamma \left(3-s\right)} & 0 & \frac{3 \Gamma \left(\frac{1}{2}-s\right) r^{-2s }}{2\sqrt{\pi } \Gamma \left(3-s\right)} \\
	\end{array}}
	\right)
      \end{equation*}
      and the error is bounded as
      \[
R_{jk}(r)=o(1) \Si^0_{jk}(r)\,.
\]
Here and in what follows, $o(1)$ denotes a quantity that tends to zero
as $r\to\infty$.

Let us define
\begin{equation}\label{eq:def I}
	I(r,z)\coloneqq\left| z_1^2 \Sigma_{13}(r)-z_2^2 \Sigma_{22}(r)+z_3 z_1  \sqrt{\Sigma_{11}(r)\Sigma_{33}(r)-{\Sigma_{13}(r)^2}}\right|
\end{equation}
and note that, by the formula for~$\Si(r)$ and the asymptotics for
weighted sums of Bessel functions presented in Corollary~\ref{C.formulas},
\begin{align*}
\sqrt{\Sigma_{11}(r)\Sigma_{33}(r)-{\Sigma_{13}(r)^2}}\sim	r^{2-2s} \pi^{-1/4} 2^{s-\frac12 }(2-s)\left(\frac{ \Gamma \left(\frac{1}{2}-s\right) \Gamma (3-2s) }{\Gamma \left(3-s\right)^3}\right)^{1/2}\,.
\end{align*}
Likewise, the quantity
\begin{equation}\label{eq:def D}
	\si(r)\coloneqq {\widetilde\Sigma _{11}(r) \widetilde\Sigma _{22}(r)}
\end{equation}
satisfies the asymptotic bound
\begin{equation*}
	{\si(r)}\sim\frac{2 \Gamma \left(\frac{
              1}{2}-s\right) \Gamma \left(\frac{3}{2}-s\right)
        }{\pi\Gamma \left(2-s\right)^2} r^{2-4s}\,.
\end{equation*}
Finally, the integral
\begin{equation}\label{eq:def cI}
	\cI(r)\coloneqq \frac{1}{{2\pi}\sqrt{  \si(r)}}\int_{\RR^3}{I(z,r)}\frac{e^{-\frac12|z|^2}}{(2\pi)^{3/2}}dz
\end{equation}
can be then estimated, as a consequence of Lemmas~\ref{claim:int zeta
  approx} and~\ref{L.integral} and of the preceding asymptotic bounds, as
\begin{equation*}
	{\cI(r)}\sim \frac{\ka(s)}{{\pi}}\,r\,,
\end{equation*}
where $\ka(s)$ is defined as in the statement.
Thus, the integral formula in Lemma~\ref{L:KR est} ensures that
\begin{equation*}
	\bE\crit\sim 2\int_0^R \ka(s) r\, dr=\ka(s)\,R^2\,.
\end{equation*}
\end{proof}

In the next lemma, we analyze the behavior of the positive constant $\ka(s)$ (which is
written simply as $\ka(s)$ in the statement of Theorem~\ref{T.main}),
for $s<\frac12$. The key idea is to obtain an easier characterization
of this constant as a one-dimensional integral.
Interestingly, the global maximum of~$\ka(s)$ is
attained at $s=0$, that is, in the classical case of random waves with
a translation-invariant covariance kernel. In Figure \ref{F.1} we have plotted $\kappa(s)$ for the first region of $s<1/2$ using the next lemma.

 \begin{figure}[t]
 	\centering
 	\includegraphics[width=0.7\linewidth]{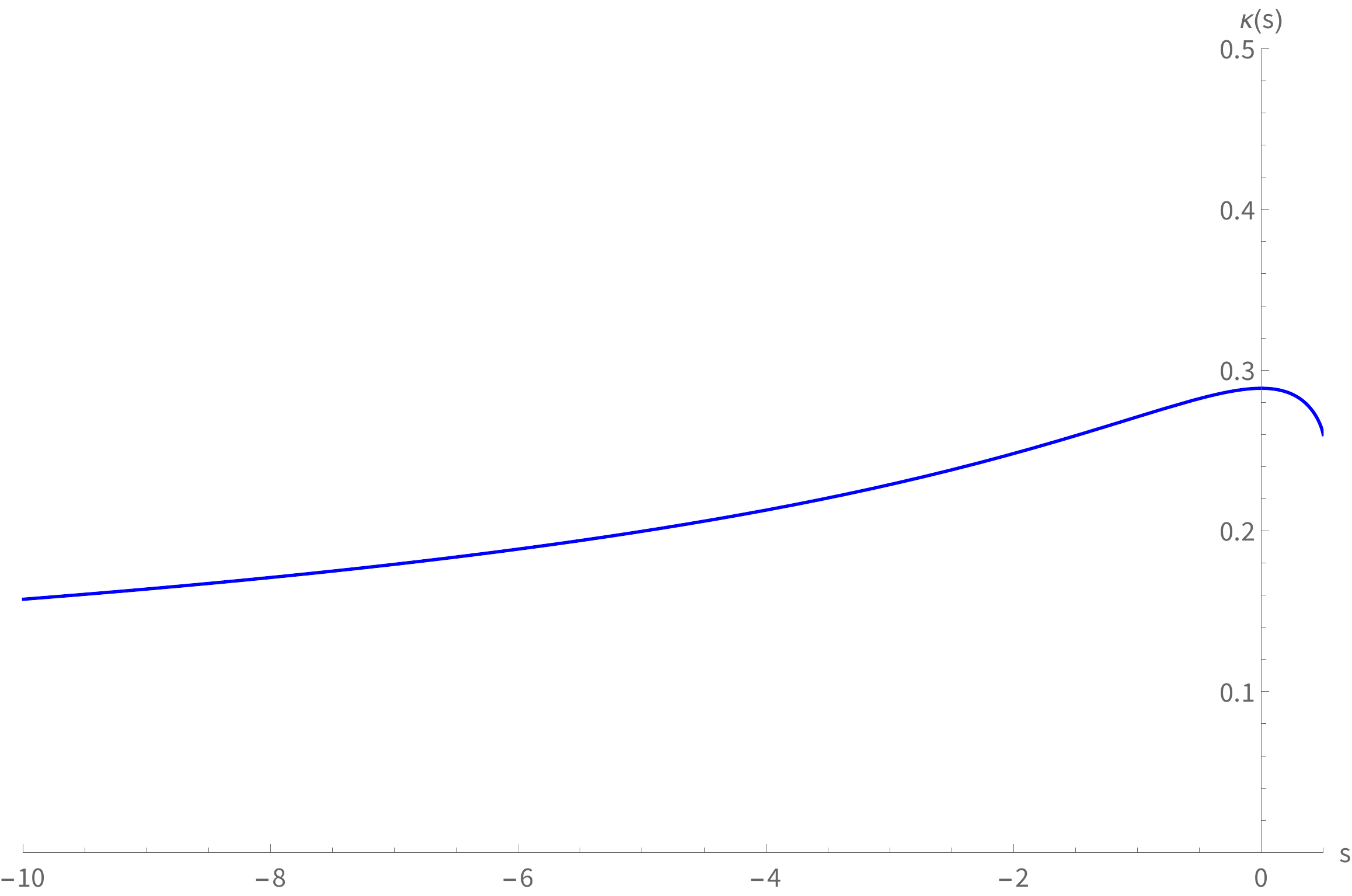}
 	\caption{}
 	\label{F.1}
 \end{figure}

\begin{lemma}\label{prop:cbeta properties}
The function $\ka(s)$ is smooth, strictly  increasing on $ s\in
(-\infty,0)$, and strictly decreasing on $(0,\frac12)$. Furthermore,
\[
\lim_{s\to\frac12^-}\ka(s)=\sqrt{\frac{2}{3}}\frac{{1}}{{\pi}}\,,\qquad \lim_{s\to-\infty}\ka(s)=0\,.
\]
\end{lemma}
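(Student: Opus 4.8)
The plan is to collapse the two $s$‑dependent factors in \eqref{ka(s)} into one function of a single variable, turn the Gaussian integral into a genuine one‑dimensional integral via Lemma~\ref{L.integral} (from which smoothness and both limits are immediate), and then prove the monotonicity by differentiating that representation; the sign analysis in the last step is the only real difficulty.

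\emph{Reparametrisation.} Put $\be=\be(s):=\sqrt{\tfrac{1-2s}{8-4s}}$, a smooth strictly decreasing bijection of $(-\infty,\tfrac12)$ onto $(0,\tfrac1{\sqrt2})$ with $\be(0)=\tfrac1{2\sqrt2}=:\be_0$. Solving $4\be^2(2-s)=1-2s$ gives $2-s=\tfrac{3}{2(1-2\be^2)}$, so \eqref{ka(s)} becomes
\[
\ka(s)=\frac{1}{2\sqrt{2-s}}\,Q(\be(s))=\frac1{\sqrt6}\,\sqrt{1-2\be(s)^2}\;Q(\be(s)),\qquad
Q(\be):=\int_{\RR^3}\bigl|\be(z_1^2-z_2^2)+z_1z_3\bigr|\,\frac{e^{-\frac12|z|^2}}{(2\pi)^{3/2}}\,dz .
\]
For each fixed $z$ the map $\be\mapsto\bigl|\be(z_1^2-z_2^2)+z_1z_3\bigr|$ is convex and even, so $Q$ is convex, even, and strictly increasing on $[0,\infty)$; it therefore suffices to study $\Psi(\be):=\sqrt{1-2\be^2}\,Q(\be)$ on $[0,\tfrac1{\sqrt2})$, noting that $s<0$ corresponds to $\be\in(\be_0,\tfrac1{\sqrt2})$ and $s\in(0,\tfrac12)$ to $\be\in(0,\be_0)$, with orientation reversed since $\be'(s)<0$.

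\emph{One‑dimensional integral; smoothness; limits.} By Lemma~\ref{L.integral} with $(A,B,C)=(\be,-\be,\tfrac12)$ and the identity $(1+2i\be t)(1-2i\be t+t^2)=1+(1+4\be^2)t^2+2i\be t^3$,
\[
Q(\be)=\frac2\pi\int_0^\infty\frac{1-a_\be(t)\cos\tfrac12\Phi_\be(t)}{t^2}\,dt,\quad
\Phi_\be(t)=\arctan\frac{2\be t^3}{1+(1+4\be^2)t^2},\quad a_\be(t)=\bigl[(1+(1+4\be^2)t^2)^2+4\be^2t^6\bigr]^{-1/4}.
\]
(Equivalently, diagonalising the form — eigenvalues $\la_1=\tfrac12(\be+\sqrt{1+\be^2})>0$, $\la_2=-\be$, $\la_3=\tfrac12(\be-\sqrt{1+\be^2})<0$ with $\la_1\la_3=-\tfrac14$ — and integrating out the angle in the two negative directions turns $Q$ into a finite trigonometric integral, the form one plots in Figure~\ref{F.1}.) The integrand is $O(1)$ as $t\to0$ and $O(t^{-2})$ as $t\to\infty$, and its $\be$‑derivatives are dominated by integrable functions of $t$ locally uniformly on compact subsets of $(0,\tfrac1{\sqrt2})$ (here one uses that $\be(z_1^2-z_2^2)+z_1z_3$ is non‑degenerate for $\be>0$); hence $Q\in C^\infty((0,\tfrac1{\sqrt2}))$ and $\ka\in C^\infty((-\infty,\tfrac12))$. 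At $\be=0$ the integrand is $t^{-2}(1-(1+t^2)^{-1/2})$, with integral $1$ (put $t=\tan\te$), so $Q(0)=\tfrac2\pi$ and $\lim_{s\to\frac12^-}\ka(s)=\tfrac1{2\sqrt{3/2}}\cdot\tfrac2\pi=\sqrt{\tfrac23}\,\tfrac1\pi$; and since $\be(s)\to\tfrac1{\sqrt2}$ as $s\to-\infty$ while $Q(\be(s))$ stays bounded, $\sqrt{1-2\be(s)^2}\to0$ forces $\ka(s)\to0$.

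\emph{Monotonicity — the crux.} As $\be'(s)<0$, the sign of $\ka'(s)$ equals minus the sign of $\Psi'(\be(s))$, and $\Psi'(\be)=\bigl[(1-2\be^2)Q'(\be)-2\be Q(\be)\bigr]/\sqrt{1-2\be^2}$, so everything reduces to: the function
\[
\psi(\be):=2\be\,Q(\be)-(1-2\be^2)\,Q'(\be)
\]
vanishes at $\be_0=\tfrac1{2\sqrt2}$, is $<0$ on $(0,\be_0)$ and $>0$ on $(\be_0,\tfrac1{\sqrt2})$ — i.e.\ $\Psi$ has a unique interior maximum, at $\be_0$, corresponding to $s=0$. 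I would prove this from the one‑dimensional formula above (differentiating once gives $Q'(\be)=\tfrac2\pi\int_0^\infty\Real\bigl[(4\be+it)(1+(1+4\be^2)t^2+2i\be t^3)^{-3/2}\bigr]dt$), noting first that $\psi(\be)=\bE\bigl[\operatorname{sgn}(g_\be)\bigl((4\be^2-1)(z_1^2-z_2^2)+2\be z_1z_3\bigr)\bigr]$ with $g_\be:=\be(z_1^2-z_2^2)+z_1z_3$, which makes $\psi(0)=0$ transparent. The critical point is pinned down exactly: at $\be=\be_0$ one has $\la_2=\la_3$, so $g_{\be_0}\overset{d}{=}\tfrac1{2\sqrt2}(2w_1^2-(w_2^2+w_3^2))$ with $w_2^2+w_3^2$ exponential, whence $Q(\be_0)=\tfrac1{2\sqrt2}\,\bE\bigl|2w_1^2-\mathrm{Exp}(\tfrac12)\bigr|=\sqrt{\tfrac23}$ (so $\ka(0)=\tfrac1{2\sqrt3}$) and $\psi(\be_0)=0$. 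The remaining content — that $\psi$ has no other zero in $(0,\tfrac1{\sqrt2})$ and changes sign there from $-$ to $+$ — I would obtain by a sign analysis of the above integral, using the convexity of $Q$ and, if necessary, evaluating $Q,Q'$ in closed form (combinations of complete elliptic integrals); after this, strict monotonicity of $\ka$ on $(-\infty,0)$ and on $(0,\tfrac12)$ and the location of its maximum at $s=0$ follow at once. This sign control of $\psi$ is the delicate point: $Q$ and especially $Q'$ are only given implicitly by oscillatory one‑dimensional integrals, so that is where the work concentrates, everything else being bookkeeping.
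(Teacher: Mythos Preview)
Your treatment of the limits and of smoothness is correct and essentially parallel to the paper's (the paper also reads off the limit $s\to\tfrac12^-$ from the degenerate integral $\int|z_1z_3|\,e^{-|z|^2/2}\,dz$ and the limit $s\to-\infty$ from the vanishing prefactor, and obtains smoothness from the one-dimensional representation of Lemma~\ref{L.integral}).

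The monotonicity argument, however, has a real gap --- and you flag it yourself. You reduce the question to showing that $\psi(\be)=2\be Q(\be)-(1-2\be^2)Q'(\be)$ has a unique zero on $(0,\tfrac1{\sqrt2})$, located at $\be_0=\tfrac1{2\sqrt2}$, and then say you ``would obtain'' this ``by a sign analysis'' or ``if necessary'' by closed-form elliptic evaluations. Neither the convexity of $Q$ nor the verification that $\psi(\be_0)=0$ implies uniqueness of the zero; the actual sign control is never carried out. By contrast, the paper avoids this difficulty entirely by \emph{not} reparametrising: it applies Lemma~\ref{L.integral} directly to $\ka(s)$ (absorbing the prefactor $\tfrac1{2\sqrt{2-s}}$ into the constants) to write
\[
\ka(s)=\frac2\pi\int_0^\infty\frac{1-a(s,t)\cos\tfrac12\Phi(s,t)}{t^2}\,dt,
\]
and then computes $\partial_s a(s,t)$ and $\partial_s\tan\Phi(s,t)$ explicitly. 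The point is that both derivatives factor as $s$ times a function that is manifestly of one sign for $t>0$ and $s<\tfrac12$; since $\Phi\in(0,\tfrac\pi2)$ in this range, the signs of $\cos\tfrac\Phi2$, $\sin\tfrac\Phi2$ are fixed, and one reads off $\ka'(s)/s<0$ for all $s\in(-\infty,0)\cup(0,\tfrac12)$ with no further analysis. Your change of variable $s\mapsto\be$ destroys precisely this factorisation (the critical value $\be_0=\tfrac1{2\sqrt2}$ has no algebraic distinction in the $\be$-picture comparable to $s=0$), which is why you are left with a hard sign problem that the paper never faces. The fix is simply to stay in the $s$ variable when differentiating.
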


\begin{proof}
The limiting values can be computed directly from the formula for
$\ka(s)$. Indeed, the (somewhat surprising) fact that $\ka(s)\to0$ as
$s\to-\infty$ is obvious in view of Equation~\eqref{ka(s)}, and as is the limit
	\begin{equation*}
		\lim_{s\to\frac12^-}\ka(s)=\int_{\RR^3}\frac{{|z_1 z_3|}}{\sqrt{6}  }\frac{e^{-\frac12|z|^2}}{(2\pi)^{3/2}}\,dz=\sqrt{\frac{2}{3}}{\frac{{1}}{\pi }}\,.
	\end{equation*}
	
        To analyze the behavior of~$\ka(s)$ for intermediate values
        of~$s$, we use Lemma~\ref{L.integral} to rewrite~\eqref{ka(s)}
        as
        \[
          \ka(s)={\frac{2}{\pi}}\int_0^\infty \frac{1-a(s,t)\cos{\frac12}\Phi(s,t)}{t^2}\,dt
        \]
        with
        \begin{align*}
a(s,t)&:=\frac{\sqrt{2} (4-2s) }{\big[(1-2s)
    t^6+\left(8 (2-s)^2+6 (1-s)
        t^2\right)^2\big]^{1/4}}\,,\\
          \Phi(s,t)&:=\arg
    \left(4+\frac{2 t^2 \left(-6s +i \sqrt{1-2s}
          t+6\right)}{(4-2s)^2}\right)\,.
        \end{align*}
        Note that
\begin{align*}
	\partial_s a(s,t)&=4s\frac{3   t^2 \left(16 (2-s)^2+t^4+12 (1-s) t^2\right)}{2 \sqrt{2} \left((1-2s) t^6+\left(8 (2-s)^2+6 (1-s) t^2\right)^2\right)^{5/4}}\,,\\
	\partial_s \tan\Phi(s,t)&=-4s\frac{3   t^3 \left(-4s +t^2+8\right)}{2 \sqrt{1-2s} \left(8 (2-s)^2+6 (1-s) t^2\right)^2}\,
\end{align*}
because
$$
\Phi(s,t)=\arctan\left(\frac{\sqrt{1-2s} t^3}{8 (2-s)^2+6 (1-s) t^2}\right)=\arctan \tan\Phi(s,t).
$$
Using that the polynomials appearing on the numerators are all
positive for $t>0$ and $s<\frac12$, it follows that $\ka'(s)/s<0$ for
all $s\in(-\infty,0)\cup(0,\frac12)$. The result then follows.
\end{proof}
\begin{remark}\label{rem:kappa 0}
	In the case $s=0$, where $\ka(s)$ attains its maximum, we recover the well-known asymptotic
	formula ({see Appendix \ref{ApCompTI}}) for the expected number of critical points:
	\begin{equation*}
		\ka(0)=\int_{\RR^3}\frac{\left| z_1^2+2 \sqrt{2} z_3 z_1-z_2^2\right| }{8 }\frac{e^{-\frac12|z|^2}}{(2\pi)^{3/2}}dz=\frac{1}{2\sqrt{3}}=0,2886\dots
	\end{equation*}
where we have used that for $s=0$ the integral above becomes
\begin{equation*}
	\frac{2 }{\pi }\int_0^{\infty } \frac{1-\frac{2}{\sqrt{-\frac{i t^3}{2}+3 t^2+16}}-\frac{2}{\sqrt{\frac{i t^3}{2}+3 t^2+16}}}{t^2} \, dt=\frac{1}{2 \sqrt{3}}.
\end{equation*}
\end{remark}
\subsection{The case $\boldsymbol{ s=\frac12}$}

We shall next show that, in spite of the appearance of logarithmic
terms in the formulas, the asymptotic behavior in the case $s=\frac12$
coincides with the limit as $s\to\frac12^+$ of the formula derived in
Lemma~\ref{L.main1}.

\begin{lemma}\label{L.main2}
  For $s=\frac12$,
  \[
\bE \crit\sim \sqrt{\frac{2}{3}}\frac{{1}}{\pi } R^2\,.
  \]
\end{lemma}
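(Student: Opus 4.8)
The plan is to plug the $s=\tfrac12$ asymptotics of the covariance entries $\Sigma_{jk}(r)$, $\widetilde\Sigma_{11}(r)$, $\widetilde\Sigma_{22}(r)$ into the Kac--Rice formula of Lemma~\ref{L:KR est} and to exploit a cancellation of logarithms. The starting point is that at $s=\tfrac12$ the Bessel sums appearing in \eqref{eqSi} split into two groups: those carrying an extra power of~$l$ (the weights $l^{2-2s}$ and $l^{4-2s}$, so $\sum l\,J_l^2$, $\sum l\,J_lJ_l'$, $\sum l\,(J_l')^2$, $\sum l\,J_lJ_l''$ and $\sum l^3 J_l^2$) have effective regularity $s-1=-\tfrac12$ or $s-2=-\tfrac32$, which is strictly below $\tfrac12$, so by Corollary~\ref{C.formulas} (applied with $s$ replaced by $s-1$, resp.\ $s-2$) they obey a pure power law with \emph{no} logarithm; on the other hand the sums weighted by $l^{-2s}=l^{-1}$ sit exactly at the critical regularity and produce a $\log r/r$ term. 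Carrying this out, and using the parity cancellation $\sum l\,J_lJ_l'=o(r)$ exactly as in the proof of Lemma~\ref{L.main1}, one obtains
\[
\Sigma_{11}(r)\sim\tfrac{8}{3\pi}r^3,\quad \Sigma_{22}(r)\sim\tfrac{4}{3\pi}r,\quad \Sigma_{13}(r)\sim\tfrac{4}{3\pi}r,\quad \Sigma_{33}(r)\sim\tfrac{4\log r}{\pi r},\quad \widetilde\Sigma_{11}(r)\sim\tfrac{4}{\pi}r,\quad \widetilde\Sigma_{22}(r)\sim\tfrac{4\log r}{\pi r}.
\]

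The crucial observation is that $\Sigma_{11}(r)\Sigma_{33}(r)$ carries an extra factor $\log r$ compared with $\Sigma_{13}(r)^2$, so
\[
\Sigma_{11}\Sigma_{33}-\Sigma_{13}^2\sim\Sigma_{11}\Sigma_{33}\sim\tfrac{32}{3\pi^2}\,r^2\log r,
\]
a quantity that dominates both $\Sigma_{13}\sim r$ and $\Sigma_{22}\sim r$. Dividing the integrand $I(r,z)$ of \eqref{eq:def I} by $\sqrt{\Sigma_{11}\Sigma_{33}-\Sigma_{13}^2}$, the coefficients of $z_1^2$ and $z_2^2$ become $O((\log r)^{-1/2})=o(1)$ while the coefficient of $z_1z_3$ is identically~$1$; hence Lemma~\ref{claim:int zeta approx} gives
\[
\int_{\RR^3}\frac{I(r,z)}{\sqrt{\Sigma_{11}\Sigma_{33}-\Sigma_{13}^2}}\,\frac{e^{-\frac12|z|^2}}{(2\pi)^{3/2}}\,dz
=\int_{\RR^3}|z_1z_3|\,\frac{e^{-\frac12|z|^2}}{(2\pi)^{3/2}}\,dz+o(1)=\frac2\pi+o(1),
\]
where the last equality uses $\tfrac1{\sqrt{2\pi}}\int_{\RR}|t|\,e^{-t^2/2}\,dt=\sqrt{2/\pi}$.

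Feeding this into the definition of $\cI(r)$ in \eqref{eq:def cI}, the two factors $\sqrt{\log r}$ — one coming from $\sqrt{\Sigma_{11}\Sigma_{33}-\Sigma_{13}^2}$ in the numerator and one from $\sqrt{\widetilde\Sigma_{11}\widetilde\Sigma_{22}}$ in the denominator — cancel, since
\[
\frac{\sqrt{\Sigma_{11}\Sigma_{33}-\Sigma_{13}^2}}{\sqrt{\widetilde\Sigma_{11}\widetilde\Sigma_{22}}}\sim\sqrt{\frac{\tfrac{32}{3\pi^2}r^2\log r}{\tfrac{16}{\pi^2}\log r}}=r\sqrt{\tfrac23}\,,
\]
and therefore $\cI(r)\sim\dfrac{1}{2\pi}\cdot r\sqrt{\tfrac23}\cdot\dfrac2\pi=\sqrt{\tfrac23}\,\dfrac{r}{\pi^2}$; in other words $\cI(r)\sim\tfrac1\pi\big(\lim_{s\to\frac12^-}\ka(s)\big)\,r$ with $\lim_{s\to\frac12^-}\ka(s)=\sqrt{2/3}/\pi$ as in Lemma~\ref{prop:cbeta properties}. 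The contribution of $r$ in a fixed bounded interval equals $\bE N(\nabla u,r_0)<\infty$ and is thus negligible, while Lemma~\ref{claim:int little o} with $q(r)=r$ absorbs the $o(r)$ error, so Lemma~\ref{L:KR est} yields
\[
\bE\crit=\int_0^R 2\pi\,\cI(r)\,dr\sim 2\int_0^R\sqrt{\tfrac23}\,\tfrac1\pi\,r\,dr=\sqrt{\tfrac23}\,\tfrac1\pi\,R^2,
\]
as claimed.

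I expect the main obstacle to be precisely this logarithmic bookkeeping: one must verify that the $z_1z_3$-term genuinely dominates the $z_1^2$- and $z_2^2$-terms in the Kac--Rice integrand, so that the latter two can be discarded via Lemma~\ref{claim:int zeta approx} with an error uniform in~$z$, and then check that the surviving $\sqrt{\log r}$ in the numerator is matched \emph{exactly} by the $\sqrt{\log r}$ produced by $\widetilde\Sigma_{22}(r)$ in the denominator. This balance is what makes the endpoint value reproduce the one-sided limit $s\to\tfrac12^-$ of Lemma~\ref{L.main1}, even though several of the individual Bessel sums now carry a genuine logarithm.
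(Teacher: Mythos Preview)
Your proof is correct and follows essentially the same route as the paper's: compute the $s=\tfrac12$ asymptotics of the $\Sigma_{jk}$ and $\widetilde\Sigma_{jj}$ from Corollary~\ref{C.formulas}, observe that the $z_1z_3$--term dominates in~$I(r,z)$ because $\Sigma_{11}\Sigma_{33}$ carries an extra $\log r$ over $\Sigma_{13}^2$ and $\Sigma_{22}^2$, and note that this $\sqrt{\log r}$ cancels exactly against the one in $\widetilde\Sigma_{22}$. Your displayed values for $\Sigma^0_{jk}$ carry the factor $1/\pi$ that the paper's matrix in the proof omits (evidently a typo there, since the paper's subsequent formulas for $\sqrt{\Sigma_{11}\Sigma_{33}-\Sigma_{13}^2}$ and $\sigma(r)$ do include it and agree with yours).
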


\begin{proof}
From Equation~\eqref{eqSi} and Corollary~\ref{C.formulas}, we infer
that in the case $s=\frac12$, we can write
\[
\Sigma(r)=\Sigma^0(r)+\cR(r)
\]
where
\begin{equation*}
\Sigma^0(r)=	\left(	{\everymath={\displaystyle}
	\begin{array}{ccc}
		\frac{8 r^3}{3} & 0 & \frac{4 r}{3} \\
		0 & \frac{4 r}{3} & 0 \\
		\frac{4 r}{3} & 0 & \frac{4 \log r }{r} \\
	\end{array}}
	\right)
\end{equation*}
and  the error is bounded as
$\cR_{ij}(r)=\Sigma_{ij}^0(r)\,o(1)$. Therefore,
\begin{equation*}
	\sqrt{\Sigma_{11}(r)\Sigma_{33}(r)-{\Sigma_{13}(r)^2}}\sim\frac{4}{3\pi} r \sqrt{6 \log r}\,.
\end{equation*}
Likewise, the function~$\si(r)$ defined in~\eqref{eq:def D} satisfies
\begin{equation*}
	\si(r)\sim \frac{16 \log r}{\pi^2}.
\end{equation*}
Plugging these formulas in~\eqref{eq:def cI}, we obtain
\begin{equation*}
	{\cI(r)}\sim \frac{r\int_{\RR^3}{|z_1
              z_3|}{e^{-\frac12|z|^2}}dz }{\sqrt 6 \pi(2\pi)^{3/2}  }=\sqrt{\frac{2}{3}}\frac{{r}}{\pi ^2}\,.
        \end{equation*}
\end{proof}

\subsection{The case $\boldsymbol{\frac12<s<\frac32}$}

We shall next show that, in the regime $\frac12<s<\frac32$, the
expected number of critical points contained in a large disk also
grows like the area. The associated proportionality constant, which we
denote by $\ka(s)$, turns out to be smooth on
$(-\infty,\frac12)\cup(\frac12,\frac32)$ but only continuous at $s=\frac12$.

\begin{lemma}
  \label{L.main3}
  For $\frac12<s<\frac32$, then $\bE N(\nabla u, R)\sim\ka(s)R^2$ with
  \begin{equation*}
		\ka(s):=\frac{1}{\pi}\sqrt{\frac{3-2s}{4-2s}}\,.
	\end{equation*}	
\end{lemma}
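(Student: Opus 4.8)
The plan is to run the scheme of Lemmas~\ref{L.main1} and~\ref{L.main2}, paying attention to which case of Corollary~\ref{C.formulas} applies to each Bessel sum. For $\frac12<s<\frac32$ the sums weighted by $l^{4-2s}$ and by $l^{2-2s}$ fall under the first ($<\tfrac12$) case of the corollary (applied with $s$ replaced by $s-2$, resp.\ $s-1$), so they contribute power laws of size $r^{4-2s}$, resp.\ $r^{2-2s}$, whereas the sums weighted by $l^{-2s}$ fall under the third ($>\tfrac12$) case, so $\sum l^{-2s}(J'_l)^2$, $\sum l^{-2s}J'_lJ''_l$ and $\sum l^{-2s}(J''_l)^2$ are each of size $r^{-1}$ with oscillatory coefficients. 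Feeding this into~\eqref{eqSi}, every off-diagonal correction to $\Si(r)$ is of strictly lower order precisely because $s>\frac12$ — for instance $\bigl(\sum l^{2-2s}J_lJ'_l\bigr)^2\big/\sum l^{-2s}(J'_l)^2=o(r^{5-4s})=o(r^{4-2s})$ — and a short computation, using the identity $(1+c\sin 2r)(1-c\sin 2r)-c^2\cos^2 2r=1-c^2$ with $c:=2^{1-2s}-1\in(-1,1)$ and $\zeta(2s)>0$, gives
\[
\Si_{11}(r)\approx r^{4-2s},\qquad \Si_{13}(r)\approx\Si_{22}(r)\approx r^{2-2s},\qquad \Si_{33}(r)\sim\frac{4\zeta(2s)(1-c^2)}{\pi r\,(1-c\sin 2r)},
\]
together with $\widetilde\Si_{11}(r)\approx r^{2-2s}$, $\widetilde\Si_{22}(r)\sim 4\zeta(2s)(1-c\sin 2r)/(\pi r)$, and — after simplifying the Gamma factors via $\Gamma(5-2s)=(4-2s)(3-2s)\Gamma(3-2s)$ and $\Gamma(3-s)=(2-s)\Gamma(2-s)$ — the ratios $\Si_{11}(r)/\widetilde\Si_{11}(r)\sim\frac{3-2s}{2(2-s)}\,r^2$ and $\Si_{33}(r)/\widetilde\Si_{22}(r)\sim\frac{1-c^2}{(1-c\sin 2r)^2}$.

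Next I would compare magnitudes in the Kac--Rice integrand~\eqref{eq:def I} of Lemma~\ref{L:KR est}. Because $s>\frac12$, one has $\Si_{11}\Si_{33}\approx r^{3-2s}$, which dominates $\Si_{13}^2\approx\Si_{22}^2\approx r^{4-4s}$; hence $\sqrt{\Si_{11}\Si_{33}-\Si_{13}^2}\sim\sqrt{\Si_{11}\Si_{33}}$, and this quantity also dominates $\Si_{13}$ and $\Si_{22}$ (since $\tfrac12(3-2s)>2-2s$ for $s>\frac12$). Dividing $I(z,r)$ by $\sqrt{\Si_{11}(r)\Si_{33}(r)}$, the coefficients of $z_1^2$ and $z_2^2$ tend to $0$ while that of $z_1z_3$ tends to $1$, so Lemma~\ref{claim:int zeta approx} gives
\[
\int_{\RR^3}I(z,r)\,\frac{e^{-\frac12|z|^2}}{(2\pi)^{3/2}}\,dz=\sqrt{\Si_{11}(r)\Si_{33}(r)}\,\Bigl(\tfrac2\pi+o(1)\Bigr),
\]
using the elementary value $\int_{\RR^3}|z_1z_3|\,(2\pi)^{-3/2}e^{-\frac12|z|^2}\,dz=\tfrac2\pi$ (a product of one-dimensional Gaussian integrals).

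Dividing by $2\pi\sqrt{\si(r)}$ as in~\eqref{eq:def D} and~\eqref{eq:def cI} and inserting the ratios above, the Kac--Rice density $\cI(r)$ satisfies
\[
\cI(r)=\frac1{\pi^2}\sqrt{\frac{\Si_{11}(r)\Si_{33}(r)}{\widetilde\Si_{11}(r)\widetilde\Si_{22}(r)}}\,(1+o(1))=r\,P(r)\,(1+o(1)),\qquad P(r):=\frac1{\pi^2}\sqrt{\frac{3-2s}{2(2-s)}}\;\frac{\sqrt{1-c^2}}{1-c\sin 2r}.
\]
The point that makes this work is that the \emph{same} oscillatory factor $1-c\sin 2r$ occurs in $\widetilde\Si_{22}(r)$ and, reciprocally, in $\Si_{33}(r)$, so that $P$ is a positive, smooth, $\pi$-periodic function of $r$ (recall $|c|<1$, so $1-c\sin 2r$ is bounded away from $0$).

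Finally, since $\bE\crit=2\pi\int_0^R\cI(r)\,dr$, Lemma~\ref{claim:int little o} (with $q(r)=rP(r)$) replaces $\cI(r)$ by $rP(r)$ up to a factor $1+o(1)$, and Lemma~\ref{L.pi/R} (with $a=1$, $b=0$) gives $\int_\pi^R rP(r)\,dr\sim\frac{R^2}{2\pi}\int_0^\pi P(r)\,dr$. The period integral is elementary: the substitution $\phi=2r$ and the standard formula $\int_0^{2\pi}(1-c\sin\phi)^{-1}\,d\phi=2\pi(1-c^2)^{-1/2}$ yield $\int_0^\pi(1-c\sin 2r)^{-1}\,dr=\pi(1-c^2)^{-1/2}$, hence $\int_0^\pi P(r)\,dr=\frac1\pi\sqrt{\frac{3-2s}{2(2-s)}}$. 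Assembling the pieces,
\[
\bE\crit\sim 2\pi\cdot\frac{R^2}{2\pi}\cdot\frac1\pi\sqrt{\frac{3-2s}{2(2-s)}}=\frac{R^2}{\pi}\sqrt{\frac{3-2s}{4-2s}}=\ka(s)\,R^2,
\]
as asserted; note in passing that $\ka(s)\to\frac1\pi\sqrt{\tfrac23}$ as $s\to\tfrac12^+$, matching Lemma~\ref{prop:cbeta properties}, and that $\ka(s)$ is manifestly smooth on $(\tfrac12,\tfrac32)$. I expect the main obstacle to be the bookkeeping in the first step: verifying that every off-diagonal correction to $\Si(r)$ is genuinely subleading and that the oscillatory parts of $\Si_{33}(r)$ and $\widetilde\Si_{22}(r)$ combine into the single periodic factor $P$; once this is in place, the remaining integration is routine given Lemmas~\ref{claim:int little o}, \ref{L.pi/R} and~\ref{claim:int zeta approx}.
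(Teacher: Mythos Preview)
Your proof is correct and follows essentially the same route as the paper: use Corollary~\ref{C.formulas} to identify the leading behavior of each entry of $\Si(r)$ and $\widetilde\Si(r)$, observe that for $s>\tfrac12$ the term $\sqrt{\Si_{11}\Si_{33}}$ dominates $\Si_{13}$ and $\Si_{22}$ in the Kac--Rice integrand so that only the $|z_1z_3|$ integral survives, and then average the resulting $\pi$-periodic factor via Lemma~\ref{L.pi/R} and the formula~\eqref{intb}. Your organization---working directly with the ratios $\Si_{11}/\widetilde\Si_{11}$ and $\Si_{33}/\widetilde\Si_{22}$ and the abbreviation $c=2^{1-2s}-1$---is a bit more streamlined than the paper's explicit $\Si^0(r)$, but the argument is the same.
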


\begin{proof}
By Equation~\eqref{eqSi} and Corollary \ref{C.formulas},
$\Si(r)=\Si^0(r)+\cR(r)$ with
\begin{equation*}
\Sigma^0(r)=	\left( {\everymath={\displaystyle}	
	\begin{array}{ccc}
		\frac{2^{2 s-3} r^{4-2 s} \Gamma (5-2 s)}{\Gamma (3-s)^2} & 0 & \frac{r^{2-2 s} \Gamma \left(\frac{3}{2}-s\right)}{\sqrt{\pi } \Gamma (3-s)} \\
		0 & \frac{r^{2-2 s} \Gamma \left(\frac{3}{2}-s\right)}{\sqrt{\pi } \Gamma (3-s)} & 0 \\
		\frac{r^{2-2 s} \Gamma \left(\frac{3}{2}-s\right)}{\sqrt{\pi } \Gamma (3-s)} & 0 & \frac{4^{2-s} \left(4^s-1\right) \zeta (2 s)}{\pi  r \left(\left(4^s-2\right) \sin (2 r)+4^s\right)} \\
	\end{array}}
	\right)
\end{equation*}
and $\cR_{ij}=\Sigma_{ij}^0(r)\,o(1)$. Therefore, as ${4-4s}<{3-2s}$,
\begin{equation*}
	\sqrt{\Sigma_{11}(r)\Sigma_{33}(r)-{\Sigma_{13}(r)^2}}\sim\sqrt{\frac{2}{\pi }} \sqrt{\frac{\left(4^s-1\right) r^{3-2 s} \zeta (2 s) \Gamma (5-2 s)}{\Gamma (3-s)^2 \left(\left(4^s-2\right) \sin (2 r)+4^s\right)}}\,.
\end{equation*}
Similarly, and using the same notation as in the last two subsections,
\begin{equation*}
	\si(r)\sim\frac{4 r^{1-2 s} \zeta (2 s) \Gamma (3-2 s) \left(\left(4^s-2\right) \sin (2 r)+4^s\right)}{\pi  \Gamma (2-s)^2}\,.
\end{equation*}
One can then plug these formulas in~\eqref{eq:def cI} to find
\begin{equation*}
	{\cI(r)}\sim\frac{r}{\pi\left(1+\left(1-2^{1-2s}\right) \sin2r\right)}\sqrt{\frac{2^{-2s } \left(1-2^{-2s }\right) (3-2s)}{(4-2s) }}\int_{\RR^3}{|z_1z_3|}\frac{e^{-\frac12|z|^2}}{(2\pi)^{3/2}}dz\,.
\end{equation*}
As $2^{1-2s}<1$, this immediately implies
\begin{equation*}
	\bE N(\nabla u, R)\sim \frac4\pi \sqrt{\frac{2^{-2s } \left(1-2^{-2s }\right) (3-2s)}{(4-2s) }}\int_0^R \frac{r}{1+(1-2^{1-2s}) \sin 2 r}\,dr\,.
      \end{equation*}
      As
\begin{equation}\label{intb}
	\int_0^{\pi } \frac{1}{1+b \sin2r} \, dr=\frac{\pi }{\sqrt{1-b^2}}
      \end{equation}
for all $|b|<1$, the formula of the statement now follows using Lemma~\ref{L.pi/R}.
\end{proof}

\begin{remark}
It follows from Lemmas~\ref{L.main1}, \ref{L.main2} and~\ref{L.main3}
that $\ka(s)\in C^\infty((-\infty,\frac12)\cup(\frac12,\frac32])$, and
that $\ka(s)$ is Lipschitz at $s=\frac12$ but not~$C^1$. It also
follows that
\[
\lim_{s\to-\infty}\ka(s)=\lim_{s\to\frac32^-}\ka(s)=0\,.
  \]
\end{remark}

\subsection{The case $\boldsymbol{ s=\frac32}$}

Here we shall see that the expected number of critical points contained in
a ball of large radius does not grow like the area of the ball any longer:

\begin{lemma}\label{L.main4}
  If $s=\frac32$,
  \[
    \bE N(\nabla u, R)\sim {\frac1\pi}\frac{R^2}{\sqrt{\log R}}\,.
\]
\end{lemma}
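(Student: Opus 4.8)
The plan is to run the argument of the proof of Lemma~\ref{L.main3} essentially verbatim; the only new feature at $s=\tfrac32$ is that several of the weighted Bessel series entering $\Sigma(r)$ and $\var Du(r)$ now sit exactly on the logarithmic endpoint of Corollary~\ref{C.formulas}. First I would identify, using Corollary~\ref{C.formulas}, into which regime each series falls at $s=\tfrac32$: the sum $\sum_l l^{4-2s}J_l(r)^2=\sum_l l\,J_l(r)^2$ has effective parameter $\tilde s=s-2=-\tfrac12<\tfrac12$, the sums of the form $\sum_l l^{2-2s}(\cdots)=\sum_l l^{-1}(\cdots)$ have effective parameter $\tilde s=s-1=\tfrac12$, and the sums $\sum_l l^{-2s}(\cdots)=\sum_l l^{-3}(\cdots)$ have $\tilde s=s=\tfrac32>\tfrac12$. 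Feeding these into Equation~\eqref{eqSi} and performing the same Schur-complement simplification of the $(3,3)$ entry as in Lemma~\ref{L.main3}, I expect to obtain $\Sigma(r)=\Sigma^0(r)+\cR(r)$, with $\cR_{ij}(r)=\Sigma^0_{ij}(r)\,o(1)$, where
\[
\Sigma^0(r)=\begin{pmatrix}\dfrac{4r}{\pi}&0&\dfrac{4\log r}{\pi r}\\[2mm]0&\dfrac{4\log r}{\pi r}&0\\[2mm]\dfrac{4\log r}{\pi r}&0&\dfrac{4(1-c^2)\,\zeta(3)}{\pi r\,(1+c\sin 2r)}\end{pmatrix},\qquad c:=1-2^{1-2s}\big|_{s=\frac32}=\tfrac34,
\]
together with $\widetilde\Sigma_{11}(r)=4\sum_l l^{2-2s}J_l(r)^2\sim\tfrac{4\log r}{\pi r}$ and $\widetilde\Sigma_{22}(r)=4\sum_l l^{-2s}J'_l(r)^2\sim\tfrac{4(1+c\sin 2r)\,\zeta(3)}{\pi r}$, so that $\si(r)=\widetilde\Sigma_{11}(r)\,\widetilde\Sigma_{22}(r)\sim\tfrac{16(1+c\sin 2r)\,\zeta(3)\,\log r}{\pi^2 r^2}$.

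The decisive observation is that $\Sigma_{11}(r)\Sigma_{33}(r)\sim\tfrac{16(1-c^2)\,\zeta(3)}{\pi^2(1+c\sin 2r)}$ stays bounded above and away from zero (since $|c|<1$), while $\Sigma_{13}(r)^2=O\big((\log r/r)^2\big)\to0$; hence $\sqrt{\Sigma_{11}\Sigma_{33}-\Sigma_{13}^2}\sim\tfrac4\pi\sqrt{\tfrac{(1-c^2)\zeta(3)}{1+c\sin 2r}}$. Because in the integrand $I(z,r)$ of~\eqref{eq:def I} the coefficients of $z_1^2$ and $z_2^2$ are $\Sigma_{13}(r),\Sigma_{22}(r)=O(\log r/r)\to0$, Lemma~\ref{claim:int zeta approx} reduces the Gaussian integral to
\[
\int_{\RR^3}I(z,r)\,\frac{e^{-\frac12|z|^2}}{(2\pi)^{3/2}}\,dz=\sqrt{\Sigma_{11}\Sigma_{33}-\Sigma_{13}^2}\int_{\RR^3}|z_1z_3|\,\frac{e^{-\frac12|z|^2}}{(2\pi)^{3/2}}\,dz+o(1)\sim\frac2\pi\cdot\frac4\pi\sqrt{\frac{(1-c^2)\zeta(3)}{1+c\sin 2r}}\,.
\]
Substituting this and the asymptotics of $\si(r)$ into~\eqref{eq:def cI}, the constant $\zeta(3)$ and the oscillating factor $\sqrt{1+c\sin 2r}$ cancel, and I expect
\[
\cI(r)\sim\frac{\sqrt{1-c^2}}{\pi^2}\cdot\frac{r}{\sqrt{\log r}\,(1+c\sin 2r)}\,.
\]

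To conclude I would argue as in Lemma~\ref{L.main1}: the Kac--Rice formula of Lemma~\ref{L:KR est} gives $\bE N(\nabla u,R)=2\pi\int_0^R\cI(r)\,dr$, so
\[
\bE N(\nabla u,R)\sim\frac2\pi\int_\pi^R\frac{\sqrt{1-c^2}\,r}{\sqrt{\log r}\,(1+c\sin 2r)}\,dr\,,
\]
and then I would apply Lemma~\ref{L.pi/R} with $a=1$, $b=-\tfrac12$ and the positive smooth $\pi$-periodic weight $P(r):=\tfrac{\sqrt{1-c^2}}{1+c\sin 2r}$; the identity $\int_0^\pi\tfrac{dr}{1+c\sin 2r}=\tfrac{\pi}{\sqrt{1-c^2}}$ from~\eqref{intb} gives $\int_0^\pi P(r)\,dr=\pi$, so all $c$-dependence disappears and $\int_\pi^R r(\log r)^{-1/2}P(r)\,dr\sim\tfrac{R^2}{2\sqrt{\log R}}$, whence $\bE N(\nabla u,R)\sim\tfrac2\pi\cdot\tfrac{R^2}{2\sqrt{\log R}}=\tfrac1\pi\tfrac{R^2}{\sqrt{\log R}}$ (and $\tka_{\frac32}=1/\pi$). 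The step I expect to be the main obstacle is the bookkeeping that isolates the single surviving logarithm and places it in $\si(r)$ — through $\widetilde\Sigma_{11}$, which now sits at the endpoint $\tilde s=\tfrac12$ of Corollary~\ref{C.formulas} — rather than letting it spoil the order-one cross term inside $I(z,r)$; this requires checking both that $\Sigma_{11}\Sigma_{33}$ is bounded away from zero and that $\Sigma_{13}$ and $\Sigma_{22}$ are genuinely $o(1)$. Once that is settled, the remaining manipulations — the absolute-value Gaussian integral via Lemmas~\ref{claim:int zeta approx} and~\ref{L.integral}, and the periodic-integral asymptotics via Lemma~\ref{L.pi/R} — proceed exactly as in the proof of Lemma~\ref{L.main3}.
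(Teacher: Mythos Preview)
Your proposal is correct and follows essentially the same route as the paper: identify the regimes of the various Bessel series at $s=\tfrac32$ via Corollary~\ref{C.formulas}, extract the leading matrix $\Sigma^0(r)$, observe that only the $z_1z_3$ term survives in the Gaussian integral, and finish with Lemma~\ref{L.pi/R} and the identity~\eqref{intb}. Your parametrisation via $c=1-2^{1-2s}=\tfrac34$ makes the cancellation of the $c$-dependent factors particularly transparent.
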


\begin{proof}
The argument is essentially as before. Using
Corollary~\ref{C.formulas} and Equation~\eqref{eqSi}, one can write
$\Si(r)=\si^0(r)+\cR(r)$, with
\begin{equation*}
	\Sigma^0(r):={\frac1\pi}	\left(	{\everymath={\displaystyle}
	\begin{array}{ccc}
		4 r & 0 & \frac{4 \log r}{r} \\
		0 & \frac{4 \log r}{r} & 0 \\
		\frac{4 \log r}{r} & 0 & \frac{7 \zeta (3)}{4 r+3 r
                                         \sin 2r} \\
	\end{array}}
	\right)
\end{equation*}
and $\cR_{ij}=\Sigma_{ij}^0(r)\,o(r^0)$. Hence, keeping track of the
errors using Lemmas~\ref{claim:int zeta approx}-\ref{claim:int little o} as before,
\begin{align*}
	\sqrt{\Sigma_{11}(r)\Sigma_{33}(r)-{\Sigma_{13}(r)^2}}&\sim\frac{2}{\pi}\sqrt{\frac{7
                                                                \zeta
                                                                (3)}{3
                                                                \sin2r+4}}\,,\\
  \si(r)&\sim\frac{4 \zeta (3) \log r (3 \sin2r+4)}{\pi^2r^2}\,.
\end{align*}
This readily implies
\[
{\cI(r)}\sim \frac r{\sqrt{\log
    r}}\frac{\sqrt{7} }{ \pi^2\left(3 \sin 2r +4\right) }\,,
\]
so Lemma~\ref{L:KR est} ensures that the expected number of critical
points satisfies
\[
\bE N(\nabla u, R)\sim {\frac{{2}\sqrt{7}}{{\pi^2}} } \int_{{\pi}}^R \frac1{4+3 \sin2r}\frac{r}{\sqrt{\log r}}dr\,.
\]
The asymptotic behavior of this integral is
\begin{align*}
  \int_{\pi}^R \frac1{4+3 \sin2r}\frac{r}{\sqrt{\log
  r}}dr&
  \sim\frac{R^2}{2\pi\sqrt{\log R}}\int_0^\pi \frac1{4+3 \sin2r}dr=\frac{R^2}{2\sqrt{{7}\log R}}\,.
\end{align*}
by Lemma~\ref{L.pi/R}, so the result follows.
\end{proof}

\subsection{The case $\boldsymbol{\frac32<s<\frac52}$}

The analysis of the large~$R$ asymptotics presents no new
difficulties:

\begin{lemma}\label{L.main5}
  For $\frac32<s<\frac52$, $
\bE \crit\sim \ka(s) R^{\frac72-s}$
  with
  \[
\ka(s):=-\frac{2^{2 s+\frac{1}{2}} r^{\frac{5}{2}-s} \sqrt{\frac{\left(4^s-1\right) \Gamma (5-2 s)}{\zeta (2 s-2)}}}{\pi ^{3/2} (7-2 s) \Gamma (3-s)} \int_0^\pi
\frac{dr}{\left(\left(4^s-2\right) \sin (2 r)+4^s\right) \sqrt{4^s-\left(4^s-8\right) \sin (2 r)}}\,.
    \]
{See Figure \textnormal{\ref{F.2}}}.
  \end{lemma}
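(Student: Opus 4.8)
The plan is to run the template already used for Lemmas~\ref{L.main1}--\ref{L.main4}: read off the leading behaviour of the matrix $\Sigma(r)$ from~\eqref{eqSi} and Corollary~\ref{C.formulas}, together with that of $\si(r)=\widetilde\Sigma_{11}(r)\widetilde\Sigma_{22}(r)$, isolate the dominant term of the Kac--Rice integrand from Lemma~\ref{L:KR est}, and integrate in~$r$ with Lemma~\ref{L.pi/R}. The new bookkeeping feature on the window $\frac32<s<\frac52$ is that the weighted Bessel sums in $\Sigma_{jk}$ carry weights $l^{4-2s}$, $l^{2-2s}$, $l^{-2s}$, i.e.\ correspond to the parameters $s-2$, $s-1$, $s$ of Corollary~\ref{C.formulas}, and here $s-2<\tfrac12<s-1$; hence $\sum l^{4-2s}J_l(r)^2$ sits in the power-law branch while all the other sums sit in the oscillatory $\Theta(r^{-1})$ branch. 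This gives $\Sigma_{11}(r)\sim\frac{2^{2s-3}\Gamma(5-2s)}{\Gamma(3-s)^2}r^{4-2s}$ (the subtracted term $4(\sum l^{2-2s}J_lJ'_l)^2/\sum l^{-2s}J'_l(r)^2=O(r^{-1})$ is negligible because $4-2s>-1$), while $\Sigma_{13}(r)$, $\Sigma_{22}(r)$, $\Sigma_{33}(r)$, $\widetilde\Sigma_{11}(r)$ and $\widetilde\Sigma_{22}(r)$ are all $\Theta(r^{-1})$ with explicit $\pi$-periodic oscillatory coefficients. Carrying out the same elementary simplification as in the proof of Lemma~\ref{L.main3}, with $c:=2^{1-2s}-1$ and $c':=2^{3-2s}-1$ (both of modulus $<1$ here), one obtains
\[
\Sigma_{33}(r)\sim\frac{4\zeta(2s)(1-c^2)}{\pi r(1-c\sin 2r)},\qquad\widetilde\Sigma_{11}(r)\sim\frac{4\zeta(2s-2)(1+c'\sin 2r)}{\pi r},\qquad\widetilde\Sigma_{22}(r)\sim\frac{4\zeta(2s)(1-c\sin 2r)}{\pi r}
\]
and hence $\si(r)\sim\frac{16\zeta(2s-2)\zeta(2s)(1+c'\sin 2r)(1-c\sin 2r)}{\pi^2 r^2}$. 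As usual I would write $\Sigma(r)=\Sigma^0(r)+\cR(r)$ with $\cR_{jk}=\Sigma^0_{jk}\,o(1)$ and control errors via Lemmas~\ref{claim:int zeta approx}--\ref{claim:int little o}.

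Next comes the key step: identifying the dominant term of $I(z,r)=\bigl|z_1^2\Sigma_{13}(r)-z_2^2\Sigma_{22}(r)+z_1z_3\sqrt{\Sigma_{11}(r)\Sigma_{33}(r)-\Sigma_{13}(r)^2}\bigr|$. Since $\Sigma_{11}\Sigma_{33}=\Theta(r^{3-2s})$ overwhelms $\Sigma_{13}^2=\Theta(r^{-2})$ exactly when $s<\frac52$, we have $\sqrt{\Sigma_{11}\Sigma_{33}-\Sigma_{13}^2}\sim\sqrt{\Sigma_{11}\Sigma_{33}}=\Theta(r^{(3-2s)/2})$, and $(3-2s)/2>-1$ on this range, so this coefficient dominates both $\Sigma_{13}$ and $\Sigma_{22}$. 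Factoring it out of $I(z,r)$ and applying Lemma~\ref{claim:int zeta approx}, the quadratic form inside the absolute value reduces to $|z_1z_3|$, whence $\int_{\RR^3}I(z,r)\frac{e^{-|z|^2/2}}{(2\pi)^{3/2}}\,dz\sim\frac2\pi\sqrt{\Sigma_{11}(r)\Sigma_{33}(r)}$ --- in contrast with the regime $s<\frac12$, here Lemma~\ref{L.integral} is not needed, as only the elementary integral $\int|z_1z_3|$ survives. Feeding this and $\si(r)$ into~\eqref{eq:def cI}, using $1-c^2=2^{2-4s}(4^s-1)$ and the identities $(4^s-2)\sin 2r+4^s=4^s(1-c\sin 2r)$ and $4^s-(4^s-8)\sin 2r=4^s(1+c'\sin 2r)$, a routine simplification of the powers of $2$ and the $\Gamma$/$\zeta$ factors collapses the answer to
\[
\cI(r)\sim\frac{2^{2s-\frac32}}{\pi^{3/2}\,\Gamma(3-s)}\sqrt{\frac{(4^s-1)\,\Gamma(5-2s)}{\zeta(2s-2)}}\;\frac{r^{\frac52-s}}{\bigl((4^s-2)\sin 2r+4^s\bigr)\sqrt{\,4^s-(4^s-8)\sin 2r\,}}\,.
\]

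Finally, by Lemma~\ref{L:KR est}, $\bE\crit=\int_0^R 2\pi\,\cI(r)\,dr$; the contribution of $[0,\pi]$ is $O(1)$ and negligible since $\frac72-s>0$. On $[\pi,R]$ I invoke Lemma~\ref{L.pi/R} with $a=\frac52-s\in(0,1)$, $b=0$ and $P(r):=\bigl[((4^s-2)\sin 2r+4^s)\sqrt{4^s-(4^s-8)\sin 2r}\bigr]^{-1}$, which is smooth, $\pi$-periodic and strictly positive for $\frac32<s<\frac52$ (both factors stay bounded away from~$0$). Since $a+1=\frac72-s$ and $\tfrac{2}{\pi(a+1)}=\tfrac{4}{\pi(7-2s)}$, this yields $\bE\crit\sim\ka(s)\,R^{\frac72-s}$ with the constant $\ka(s)$ of the statement (the $r^{\frac52-s}$ written in the displayed formula for $\ka(s)$ being, of course, part of the growth factor $R^{\frac72-s}$). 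The main obstacle is not conceptual --- once one sees that for $\frac32<s<\frac52$ the cross term $z_1z_3$ is the only term surviving in $I(z,r)$, the argument is mechanical --- but lies in the two pieces of bookkeeping: matching each of $\sum l^{4-2s}J_l^2$, $\sum l^{2-2s}J_lJ''_l$, $\sum l^{2-2s}J'^2_l$, $\sum l^{-2s}J''^2_l$, $\dots$ to the right branch of Corollary~\ref{C.formulas} (recalling that for this range $s-2$ and $s-1$ lie on opposite sides of $\tfrac12$, which is precisely what produces the interpolating exponent $\tfrac72-s$), and then simplifying $\sqrt{\Sigma_{11}\Sigma_{33}-\Sigma_{13}^2}/\sqrt{\si(r)}$ cleanly enough that the oscillatory factors assemble into the compact form in the final integral.
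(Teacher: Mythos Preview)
Your proposal is correct and follows essentially the same route as the paper: write $\Sigma(r)=\Sigma^0(r)+\cR(r)$ from~\eqref{eqSi} and Corollary~\ref{C.formulas}, extract the leading term of~$\cI(r)$, and integrate via Lemma~\ref{L.pi/R}. You are in fact more explicit than the paper about the key point that on $\tfrac32<s<\tfrac52$ the coefficient $\sqrt{\Sigma_{11}\Sigma_{33}-\Sigma_{13}^2}=\Theta(r^{(3-2s)/2})$ dominates both $\Sigma_{13}$ and $\Sigma_{22}$, so that only the elementary $\int|z_1z_3|$ survives and Lemma~\ref{L.integral} is not needed; the paper simply writes down~$\cI(r)$ directly.
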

 \begin{figure}[t]
	\centering
	\includegraphics[width=0.7\linewidth]{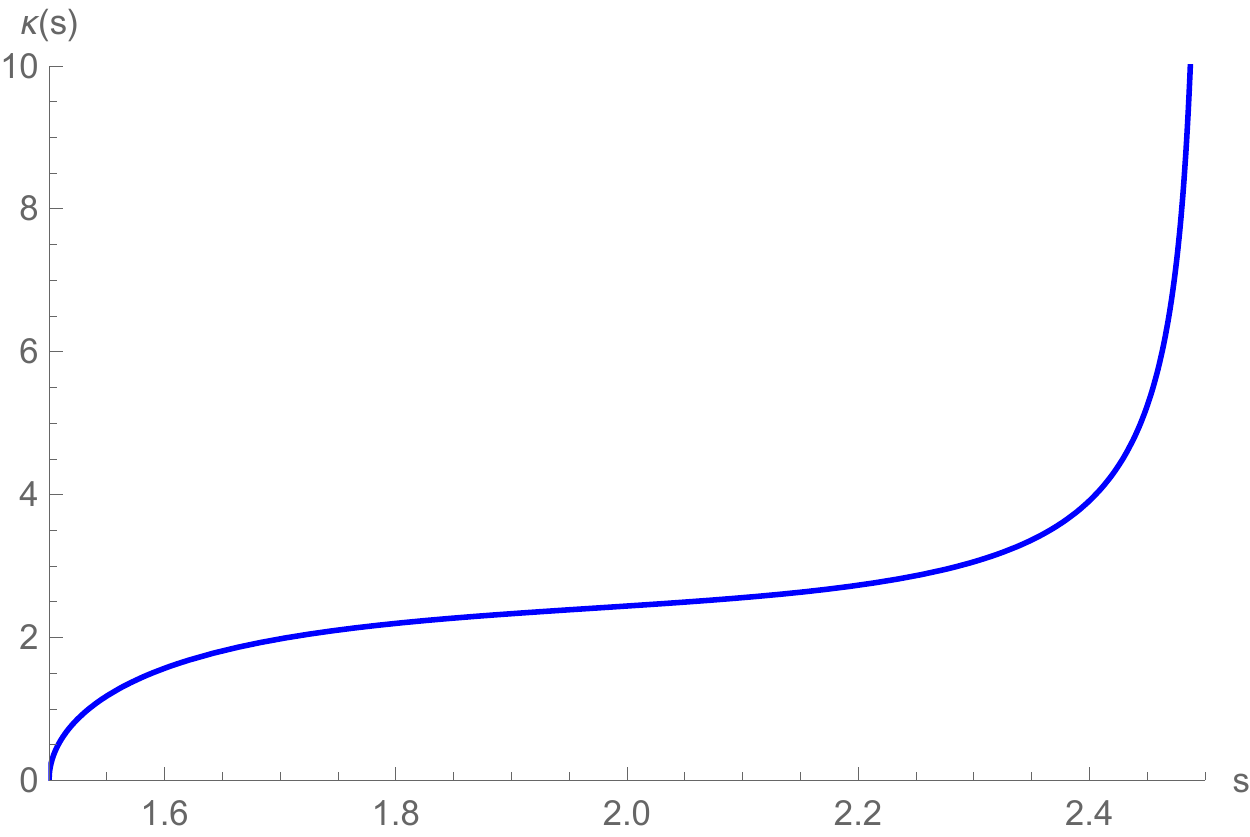}
	\caption{}
	\label{F.2}
\end{figure}
  \begin{proof}
Arguing as before, one finds that $\Si(r)=\Si^0(r)+\cR(r)$ with
\begin{equation*}
	\Sigma^0(r)={\frac{1}{\pi}}	\left(
	\begin{array}{ccc}
		\frac{\pi  2^{2s -3} \Gamma (5-2s) r^{4-2s}}{\Gamma \left(3-s\right)^2} & 0 & \frac{2^{3-2s} \zeta (2s -2) \left(2^{3-2s}-3 \sin2r-5\right)}{r \left(\left(2^{1-2s}-1\right) \sin2r-1\right)} \\
		0 & -\frac{2^{6-2s} \left(2^{2-2s}-1\right) \zeta (2s -2)}{\left(2^{3-2s}-1\right) r \sin2r+r} & 0 \\
		\frac{2^{3-2s} \zeta (2s -2) \left(2^{3-2s}-3 \sin2r-5\right)}{r \left(\left(2^{1-2s}-1\right) \sin2r-1\right)} & 0 & \frac{2^{4-2s} \left(2^{-2s }-1\right) \zeta (2s )}{r \left(\left(2^{1-2s}-1\right) \sin2r-1\right)}
	\end{array}
	\right)
      \end{equation*}
      and $\cR_{ij}=\Sigma_{ij}^0(r)o(1)$. This readily leads to the expression
\begin{equation*}
	{\cI(r)}\sim\frac{\left(2^{2 s-\frac{1}{2}} r^{\frac{5}{2}-s}\right) }{\sqrt{\pi } \Gamma (3-s) \left(\left(4^s-2\right) \sin (2 r)+4^s\right)}\sqrt{\frac{\left(4^s-1\right) \Gamma (5-2 s)}{\zeta (2 s-2) \left(4^s-\left(4^s-8\right) \sin (2 r)\right)}}\,,
      \end{equation*}
      which implies
      \begin{align*}
      	  \bE \crit
       &\sim \frac{4\left(2^{2 s-\frac{1}{2}} \right) }{\sqrt{\pi } \Gamma (3-s) }\sqrt{\frac{\left(4^s-1\right) \Gamma (5-2 s)}{\zeta (2 s-2) }}\times\\
       &\times\int_0^R\frac{r^{\frac{5}{2}-s}}{\left(\left(4^s-2\right) \sin (2 r)+4^s\right)\left(4^s-\left(4^s-8\right) \sin (2 r)\right)}dr
       \,.
      \end{align*}
Applying Lemma~\ref{L.pi/R} once again, one obtains the desired formula.
    \end{proof}

    \subsection{The case $\boldsymbol{s=\frac52}$}

The next lemma shows that at this regularity level, there is another
transition in the asymptotic behavior of the expected number of
critical points of~$u$:

\begin{lemma}\label{L.main6}
If $s=\frac52$, $\bE \crit\sim \tka_{\frac52} R\sqrt{\log R}$
with
\[
\tka_{\frac52}:= \frac4{\pi^2}\sqrt{\frac{31}{\zeta(3)}}\int_0^\pi\frac{dr}{(16+15\sin2r)\sqrt{4-3\sin2r}}{\approx 0.497339}\,.
  \]
\end{lemma}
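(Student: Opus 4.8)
The plan is to follow the scheme used for $s=\frac32$ and for $\frac32<s<\frac52$ in Lemmas~\ref{L.main4} and~\ref{L.main5}; the distinctive feature at $s=\frac52$ is that exactly one logarithmic factor survives, and it comes from the entry $\Sigma_{11}(r)$. First I would use Equation~\eqref{eqSi} together with Corollary~\ref{C.formulas} to find the asymptotics of the matrix $\Sigma(r)$. At $s=\frac52$ the weights occurring in the relevant Bessel sums are $l^{-1}$ (in $\Sigma_{11}$), $l^{-3}$ (in $\widetilde\Sigma_{11}$, $\Sigma_{22}$, $\Sigma_{13}$) and $l^{-5}$ (in $\widetilde\Sigma_{22}$, $\Sigma_{33}$). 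The key observation is that $4\sum_{l\ge1}l^{4-2s}J_l(r)^2=4\sum_{l\ge1}l^{-1}J_l(r)^2$ belongs to the borderline ``$s=\frac12$'' case of Corollary~\ref{C.formulas}, so $\Sigma_{11}(r)\sim\frac{4\log r}{\pi r}$, whereas all other sums are governed by the ``$s>\frac12$'' case and are $O(r^{-1})$ with coefficients involving $\zeta(3)$ and $\zeta(5)$. Writing $\Sigma(r)=\Sigma^0(r)+\cR(r)$ with $\cR_{ij}(r)=\Sigma^0_{ij}(r)\,o(1)$, a computation entirely parallel to the one in Lemma~\ref{L.main4} (where the telescoping $16-9=7$ produced the clean numerator) gives
\[
\Sigma^0(r)=\frac1\pi\left(\begin{array}{ccc}
\dfrac{4\log r}{r} & 0 & \Sigma^0_{13}(r) \\[3mm]
0 & \dfrac{7\,\zeta(3)}{r(4-3\sin 2r)} & 0 \\[3mm]
\Sigma^0_{13}(r) & 0 & \dfrac{31\,\zeta(5)}{4\,r(16+15\sin 2r)}
\end{array}\right),
\]
where now the telescoping identity reads $256-225=31$ and $\Sigma^0_{13}(r)=O(r^{-1})$ is an explicit rational expression in $\sin 2r$ whose precise form will be irrelevant.

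Next I would insert these asymptotics into the integral representation of Lemma~\ref{L:KR est}. Since $\Sigma_{11}(r)\Sigma_{33}(r)$ is of order $r^{-2}\log r$ while $\Sigma_{13}(r)^2$ is only of order $r^{-2}$, one gets
\[
\sqrt{\Sigma_{11}(r)\Sigma_{33}(r)-\Sigma_{13}(r)^2}\sim\frac{\sqrt{31\,\zeta(5)\,\log r}}{\pi\, r\sqrt{16+15\sin 2r}},
\]
which dominates $|\Sigma_{13}(r)|$ and $|\Sigma_{22}(r)|$ (both of order $r^{-1}$). Hence, applying Lemmas~\ref{claim:int zeta approx} and~\ref{L.integral} to $I(z,r)$ as in~\eqref{eq:def I}, the leading part of $\int_{\RR^3}I(z,r)\,\frac{e^{-|z|^2/2}}{(2\pi)^{3/2}}\,dz$ equals $\sqrt{\Sigma_{11}(r)\Sigma_{33}(r)-\Sigma_{13}(r)^2}\int_{\RR^3}|z_1z_3|\,\frac{e^{-|z|^2/2}}{(2\pi)^{3/2}}\,dz=\tfrac2\pi\sqrt{\Sigma_{11}(r)\Sigma_{33}(r)-\Sigma_{13}(r)^2}$. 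On the other hand, Lemma~\ref{LemmaNonDeg} and Corollary~\ref{C.formulas} give $\widetilde\Sigma_{11}(r)\sim\frac{\zeta(3)(4-3\sin 2r)}{\pi r}$ and $\widetilde\Sigma_{22}(r)\sim\frac{\zeta(5)(16+15\sin 2r)}{4\pi r}$, so that $\si(r)=\widetilde\Sigma_{11}(r)\widetilde\Sigma_{22}(r)\sim\frac{\zeta(3)\zeta(5)(4-3\sin 2r)(16+15\sin 2r)}{4\pi^2r^2}$. Substituting into~\eqref{eq:def cI}, the factors of $\zeta(5)$ cancel and I obtain
\[
\cI(r)\sim\frac{2}{\pi^2}\sqrt{\frac{31}{\zeta(3)}}\;\frac{\sqrt{\log r}}{(16+15\sin 2r)\sqrt{4-3\sin 2r}}.
\]

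Finally I would integrate in $r$. The function $P(r):=\big[(16+15\sin 2r)\sqrt{4-3\sin 2r}\big]^{-1}$ is positive, smooth and $\pi$-periodic, and $\bE\crit\sim2\pi\int_\pi^R\cI(r)\,dr$ as in the previous lemmas; Lemma~\ref{L.pi/R} with $a=0$ and $b=\frac12$ then gives $\int_\pi^R\sqrt{\log r}\,P(r)\,dr\sim\frac{R\sqrt{\log R}}{\pi}\int_0^\pi P(r)\,dr$, whence
\[
\bE\crit\sim\frac{4}{\pi^2}\sqrt{\frac{31}{\zeta(3)}}\,R\sqrt{\log R}\int_0^\pi\frac{dr}{(16+15\sin 2r)\sqrt{4-3\sin 2r}}=\tka_{\frac52}\,R\sqrt{\log R},
\]
and the numerical value $\tka_{\frac52}\approx0.497339$ follows by a numerical evaluation of this elliptic-type integral. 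The only step that needs real care is the bookkeeping of error terms: one must check that the relative $o(1)$ errors in $\Sigma^0(r)$, together with the $O(r^{-1})$ contributions of the $z_1^2\Sigma_{13}$ and $z_2^2\Sigma_{22}$ pieces of $I(z,r)$, are genuinely subdominant with respect to the surviving $r^{-1}\sqrt{\log r}$ term. This in turn relies on the fact that $16+15\sin 2r$ and $4-3\sin 2r$ stay bounded away from $0$, so that the algebraic cancellations $256-225=31$ and $16-9=7$ do not lower the order of the relevant quantities; apart from this routine verification I do not expect any genuine obstacle.
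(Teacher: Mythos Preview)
Your proposal is correct and follows essentially the same route as the paper's proof: compute $\Sigma^0(r)$ from Corollary~\ref{C.formulas}, observe that the logarithmic entry $\Sigma_{11}\sim\frac{4\log r}{\pi r}$ makes $\Sigma_{11}\Sigma_{33}$ dominate $\Sigma_{13}^2$ so that only the $|z_1z_3|$ piece survives, compute $\sigma(r)$ and $\cI(r)$, and finish with Lemma~\ref{L.pi/R}. The paper records the explicit form $\Sigma^0_{13}(r)=\frac{\zeta(3)(12\sin 2r+19)}{\pi r(15\sin 2r+16)}$ but, as you correctly note, this is irrelevant to the leading asymptotics; your write-up is otherwise more detailed than the paper's in displaying the intermediate quantities $\widetilde\Sigma_{11}$, $\widetilde\Sigma_{22}$ and $\sqrt{\Sigma_{11}\Sigma_{33}-\Sigma_{13}^2}$.
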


\begin{proof}
  Arguing as before, one find that $\Si(r)=\Si^0(r)+\cR(r)$ with
  \begin{equation*}
	\Sigma^0(r)={\frac{1}{\pi}}	\left(	{\everymath={\displaystyle}
	\begin{array}{ccc}
		\frac{4 \log r}{r} & 0 & \frac{\zeta (3) (12 \sin2r+19)}{r (15\sin2r+16)} \\
		0 & \frac{7 \zeta (3)}{4 r-3 r \sin 2r} & 0 \\
		\frac{\zeta (3) (12 \sin2r+19)}{r (15 \sin2r +16)} & 0 & \frac{31 \zeta (5)}{64 r+60 r \sin2r} \\
	\end{array}}
	\right)
      \end{equation*}
      and $\cR_{ij}(r)=\Si^0_{ij}(r)\,o(1)$. This eventually yields the
      asymptotic formula
      \[
\cI(r)\sim \frac2{\pi^2}\sqrt{\frac{31}{\zeta(3)}}\frac{\sqrt{\log r}}{(16+15\sin2r)\sqrt{4-3\sin2r}}\,,
        \]
        which implies
        \[
\bE \crit\sim \frac4{\pi}\sqrt{\frac{31}{\zeta(3)}}\int_0^R \frac{\sqrt{\log r}}{(16+15\sin2r)\sqrt{4-3\sin2r}}dr
        \]
        by Lemmas~\ref{L:KR est} and~\ref{claim:int little
          o}. Lemma~\ref{L.pi/R} then yields the desired asymptotic behavior.
\end{proof}

\subsection{The case $\boldsymbol{s>\frac52}$}

In this regime, the proof goes as before, showing that
the expected number of critical points contained in a large ball grows asymptotically
like the radius. However, the explicit formulas
one obtains for the proportionality constant are extremely cumbersome.

\begin{lemma}\label{L.main7}
  For $s>\frac52$, there exists an explicit constant $\ka(s)>0$ such that
  \[
\bE\crit\sim \ka(s)R\,.
    \]
  \end{lemma}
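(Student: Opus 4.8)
The argument is parallel to the previous cases (Lemmas~\ref{L.main1}--\ref{L.main6}); the only structural novelty is that, once $s>\tfrac52$, \emph{every} entry of the leading part $\Si^0(r)$ of the matrix $\Si(r)$ from~\eqref{eqSi} decays exactly like $r^{-1}$ times a smooth $\pi$-periodic function of~$r$. Consequently the Kac--Rice integrand $\cI(r)$ of~\eqref{eq:def cI} no longer grows but converges, along each residue class $r\pmod{\pi}$, to a $\pi$-periodic profile $\Psi(r)$, and $\int_0^R\Psi(r)\,dr$ grows linearly by Lemma~\ref{L.pi/R}.

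Concretely, the first step is to evaluate $\Si(r)$ through~\eqref{eqSi} by means of Corollary~\ref{C.formulas}, applied with its parameter~$s$ replaced by $s$, $s-1$ and $s-2$ for the series weighted by $l^{-2s}$, $l^{2-2s}$ and $l^{4-2s}$, respectively. Since $s>\tfrac52$ we have $s-2>\tfrac12$, so in all three instances we land in the ``$s>\tfrac12$'' branch of Corollary~\ref{C.formulas} and every series has a genuine leading term of order $r^{-1}$; this is exactly why the threshold sits at $s>\tfrac52$ and why a logarithm appears at $s=\tfrac52$. Simplifying the Schur complements that define $\Si_{11},\Si_{13},\Si_{22},\Si_{33}$ --- lengthy but entirely mechanical --- one obtains $\Si(r)=\Si^0(r)+\cR(r)$ with $\Si^0(r)=r^{-1}P(r)$ and $\cR_{ij}(r)=\Si^0_{ij}(r)\,o(1)$, where $P(r)$ has the same sparsity as~\eqref{eqSi} and smooth $\pi$-periodic entries built out of $\zeta(2s),\zeta(2s-2),\zeta(2s-4)$ and the denominators $1\pm(2^{1-2s}-1)\sin 2r$, $1+(2^{3-2s}-1)\sin 2r$, all bounded away from $0$ because $|2^{1-2s}-1|,|2^{3-2s}-1|<1$ for $s>\tfrac12$. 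For instance, with $b:=2^{3-2s}-1$ and $b':=2^{1-2s}-1$, short computations give
\[
  P_{22}(r)=\frac{4(1-b^2)\,\zeta(2s-2)}{\pi(1+b\sin 2r)}\,,\qquad P_{33}(r)=\frac{4(1-b'^2)\,\zeta(2s)}{\pi(1-b'\sin 2r)}\,,
\]
and the entries $\widetilde\Si_{11}(r),\widetilde\Si_{22}(r)$ of $\var Du(r,\te)$ obey $\widetilde\Si_{11}(r)\sim r^{-1}\widetilde P_{11}(r)$, $\widetilde\Si_{22}(r)\sim r^{-1}\widetilde P_{22}(r)$ with $\widetilde P_{11}(r)=\tfrac4\pi\zeta(2s-2)(1+b\sin 2r)>0$, $\widetilde P_{22}(r)=\tfrac4\pi\zeta(2s)(1-b'\sin 2r)>0$; the entries $P_{11},P_{13}$ are analogous but messier. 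Finally, positive definiteness of $\Si(r)$ for each finite~$r$, together with $P(\rho)=\lim_{n\to\infty}r_n\Si(r_n)$ along any progression $r_n\equiv\rho\pmod{\pi}$, forces $P(\rho)\succeq 0$, so in particular $P_{11}(r)P_{33}(r)-P_{13}(r)^2\ge0$ for every~$r$.

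The second step feeds these asymptotics into $\cI(r)$. By Lemma~\ref{claim:int zeta approx} one may replace $\Si_{13}(r),\Si_{22}(r)$ and $\sqrt{\Si_{11}(r)\Si_{33}(r)-\Si_{13}(r)^2}$ by $r^{-1}P_{13}(r)$, $r^{-1}P_{22}(r)$ and $r^{-1}\sqrt{P_{11}(r)P_{33}(r)-P_{13}(r)^2}$ at the cost of an $o(r^{-1})$ error, and then Lemma~\ref{L.integral} with $A=r^{-1}P_{13}$, $B=-r^{-1}P_{22}$, $2C=r^{-1}\sqrt{P_{11}P_{33}-P_{13}^2}$ expresses the $z$-integral as $\tfrac2\pi\int_0^\infty t^{-2}(1-a(t)\cos\tfrac12\Phi(t))\,dt$; the substitution $t\mapsto rt$ there pulls out exactly one factor $r^{-1}$, since $a$ and $\Phi$ depend on $A,B,C$ only through $At$, $Bt$ and $C^2t^2$. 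Combined with $\tfrac1{2\pi\sqrt{\si(r)}}\sim \tfrac{r}{2\pi}\big(\widetilde P_{11}(r)\widetilde P_{22}(r)\big)^{-1/2}$ from Step~1 (with $\si(r)$ as in~\eqref{eq:def D}), the two powers of~$r$ cancel and one gets
\[
  \cI(r)=\Psi(r)+o(1)\,,\qquad \Psi(r):=\frac{1}{\pi^2\sqrt{\widetilde P_{11}(r)\widetilde P_{22}(r)}}\int_0^\infty\frac{1-\widetilde a(t;r)\cos\frac12\widetilde\Phi(t;r)}{t^2}\,dt\,,
\]
where $\widetilde a,\widetilde\Phi$ are the expressions of Lemma~\ref{L.integral} for the rescaled constants $P_{13},-P_{22},\tfrac12\sqrt{P_{11}P_{33}-P_{13}^2}$. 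The profile $\Psi$ is $\pi$-periodic, bounded (the integrand is $O(t^2)$ near $t=0$ and $O(t^{-2})$ at infinity), smooth in~$r$ (because $a,\Phi$ see the last constant only through $\tfrac14(P_{11}P_{33}-P_{13}^2)$, which is smooth in~$r$), and strictly positive (since $B=-r^{-1}P_{22}$ never vanishes). Writing $\bE\crit=2\pi\int_0^R\cI(r)\,dr$ as in the previous cases, absorbing the error by Lemma~\ref{claim:int little o} and applying Lemma~\ref{L.pi/R} with $a=b=0$ to the positive smooth $\pi$-periodic function $\Psi$, one concludes $\bE\crit\sim 2R\int_0^\pi\Psi(r)\,dr=:\ka(s)\,R$, with $\ka(s)>0$ explicit but very unwieldy and depending on~$s$ through $\zeta(2s),\zeta(2s-2),\zeta(2s-4)$.

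I expect no conceptual obstacle: the proof mirrors the earlier lemmas, and the real work is the bookkeeping of the many Bessel series through Corollary~\ref{C.formulas} and the simplification of the Schur complements. The single point demanding genuine (if elementary) care is the non-degeneracy of the limiting profiles: one needs strict positivity of $\widetilde P_{11},\widetilde P_{22}$ (so that $\rho_{Du(r,\te)}(0)$ is controlled and $\cI$ has a finite limit) and $P_{11}P_{33}-P_{13}^2\ge0$ (so that the square root is legitimate); the former is immediate from the explicit formulas together with $|2^{1-2s}-1|,|2^{3-2s}-1|<1$, and the latter from positive-definiteness of $\Si(r)$. If one wants the sharper bound $P_{11}P_{33}-P_{13}^2>0$, it can be checked directly; the computation uses, among other things, the Cauchy--Schwarz inequality $\zeta(2s-2)^2\le\zeta(2s-4)\zeta(2s)$.
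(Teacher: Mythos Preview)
Your proposal is correct and follows essentially the same approach as the paper: compute $\Si(r)=\Si^0(r)+\cR(r)$ with all entries of $\Si^0$ of exact order $r^{-1}$ times smooth $\pi$-periodic functions via Corollary~\ref{C.formulas}, deduce that $\cI(r)$ converges to a $\pi$-periodic profile, verify its strict positivity (the paper does this by noting $a(t)<1$, which is equivalent to your observation that $P_{22}\neq0$), and conclude by Lemmas~\ref{claim:int little o} and~\ref{L.pi/R}. The only cosmetic differences are that the paper writes out the explicit (and admittedly cumbersome) formulas for $\Si^0$ and the constants $A,B,C,\alpha$, whereas you suppress them and instead use the rescaling $t\mapsto rt$ inside Lemma~\ref{L.integral} to make the $r$-cancellation transparent; both routes yield the same $\ka(s)=2\int_0^\pi\Psi(r)\,dr$.
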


  \begin{proof}
As in the previous cases, let us write $\Si(r)=\Si^0(r)+\cR$ with
$\cR_{ij}=\Si^0(R)\,o(1)$ and
\begin{equation*}
	\Sigma^0(r)=	\frac{1}{{\pi}r}\left(
	\begin{array}{ccc}
		\Sigma_{11}(r) & 0 & \frac{2^{3-2s} \zeta (2s -2) \left(2^{3-2s}-3 \sin2r-5\right)}{\left(2^{1-2s}-1\right) \sin2r-1} \\
		0 & -\frac{2^{6-2s} \left(2^{2-2s}-1\right) \zeta (2s -2)}{\left(2^{3-2s}-1\right)  \sin2r+1} & 0 \\
		\frac{2^{3-2s} \zeta (2s -2) \left(2^{3-2s}-3 \sin2r-5\right)}{\left(2^{1-2s}-1\right) \sin2r-1} & 0 & \frac{2^{4-2s} \left(2^{-2s }-1\right) \zeta (2s )}{\left(2^{1-2s}-1\right) \sin2r-1} \\
	\end{array}
	\right)\,.
\end{equation*}
Here
$$
\Sigma_{11}(r)\coloneqq4 \zeta (2s -4) \left(\left(2^{5-2s}-1\right) \sin2r+1\right)+\frac{4\left(2^{3-2s}-1\right)^2 \cos ^2(2 r) \zeta (2s-2)^2}{\zeta (2s ) \left(\left(2^{1-2s}-1\right) \sin2r-1\right)}\,.
$$
Note that all the nonzero matrix
components are exactly of order~$1/r$. While this fact does not make the
problem any harder from a conceptual point of view, it leads to
cumbersome expressions for the various quantities appearing in the
equations.

Specifically, it is not hard to show that
\begin{equation*}
	\si(r)\sim-\frac{16 \zeta (2s -2) \zeta (2s ) \left(\left(2^{1-2s}-1\right) \sin2r-1\right) \left(\left(2^{3-2s}-1\right) \sin2r+1\right)}{r^2}\,.
      \end{equation*}
      Plugging this formula in the expression for $I(r,z)$, one finds that
      \[
\cI(r)\sim \int_{\RR^3}|Az_1^2+Bz_2^2{+}2Cz_1z_2|\frac{e^{-\frac12|z|^2}}{(2\pi)^{3/2}}\,dz\,,
\]
where the constants
\begin{align*}
\al&:=\frac1\pi\Big[ \ze(2s-2)\ze(2s)[1{+}(1-2^{1-2s})\sin 2r][1+(2^{3-2s}-1)\sin2r]\Big]^{-\frac12}\\
A&:= \al 2^{-2s}\ze(2s-2)\frac{5{-}2^{3-2s}{+}3\sin2r}{1-(1-2^{1-2s})\sin
   2r}\,,\\
  B&:= \al
     2^{3-2s}\ze(2s-2)\frac{2^{2-2s}-1}{1+(2^{3-2s}-1)\sin2r}\,,\\
  C&:= \frac{\al 2^{-s-1}}{1{+}(1-2^{1-2s})\sin 2r}\times\\
  &\quad\times\Big[(1-2^{-2s})\ze(2s-4)\ze(2s) [1{+}(1-2^{1-2s})\sin 2r][1+({-1+}2^{5-2s})\sin2r] \\
     &\quad\quad +\ze(2s-2)^2\big[ {-}(1-2^{-2s})(1-2^{3-2s})^2 \cos^22r
     -2^{-2s}(2^{3-2s} -3\sin2r -5)^5\big]^2\Big]^{\frac12}
\end{align*}
are smooth functions of $\sin 2r$.

Lemma~\ref{L.integral} then shows that
\[
\cI(r)\sim F(s,\sin 2r)
\]
for some explicit smooth function of the form
\[
F(s,\sin 2r)={\frac{2}{\pi}}\int_0^\infty\frac{1-a(t,s,\sin
  2r)\cos\frac{1}{2}\Phi(t,s,\sin 2r)}{t^2}dt\,.
\]
Since
\[
  a(t,s,\sin 2r)= \Big[(1+4B^2t^2)\big[ (1+4C^2t^2)^2+4A^2t^2\big]\Big]^{-\frac14}<1
\]
for all $r$ and all $t>0$, it stems that
\[
  F(s,\sin 2r){>0}\,.
\]
Lemmas~\ref{L:KR est}, \ref{claim:int little o} and~\ref{L.pi/R} then ensure that
\[
\bE\crit\sim \ka(s) R
\]
with
\[
\ka(s):={2 } \int_0^\pi F(s,\sin 2r)\,dr\,.
\]
\end{proof}

One can now read the asymptotic behavior of $\bE\crit$ in any
regularity regime from the lemmas that we have established in this
section. Theorem~\ref{T.main} is therefore proven.

\section{Asymptotics for the number of critical points in the high
  regularity case}\label{ap:smooth case}

This section is devoted to the proof of Theorem~\ref{T.as}. As all along this paper, we shall take the
definition~\eqref{defu2} for the Gaussian random function~$u$.


\subsection{Some non-probabilistic lemmas}

Before presenting the proof of this theorem, we need to prove a few
auxiliary results that do not use the fact that $u$ and~$f$ are random
functions. Specifically, these lemmas concern solutions to the
Helmholtz equation on~$\RR^2$ of the form
\[
v(x):=\int_{\TT}e^{-i x\cdot E(\phi)}\, g(\phi)\, d\phi
\]
where $g\in H^m(\TT)$ for a certain real~$m$ and the standard embedding $E:\TT\to\RR^2$ is given by~\eqref{defE}.

We start by recalling the following result on the asymptotic
behavior of~$v$, which we proved in~\cite[Proposition 2.2 and Remark~3.2]{EPR20}. In what follows, we will
denote the real and imaginary parts of a function~$g$ by $g\R$ and $g\I$, respectively.

\begin{lemma}\label{L.stationary}
	If $m>9/2$, for $r\gg1$ one has
	\begin{align*}
		v&=
		\bigg(\frac{8\pi}{r}\bigg)^{\frac12}\big[
		g\I(\te)\, \sin(r-\tfrac{\pi}{4})+ g\R(\te)\,\cos(r-\tfrac{\pi}{4})+\cR_1\big]\,,\\
		\pd_r v&=
		\bigg(\frac{8\pi}{r}\bigg)^{\frac12}\big[
		g\I(\te)\, \cos(r-\tfrac{\pi}{4}) -g\R(\te)\,\sin(r-\tfrac{\pi}{4})+\cR_2\big]\,,\\
		\pd_\te v &=
		\bigg(\frac{8\pi}{r}\bigg)^{\frac12}\big[
		 g\I'(\te)\, \sin(r-\tfrac{\pi}{4})+ 	g\R'(\te)\,\cos(r-\tfrac{\pi}{4})+\cR_3\big]\,,
	\end{align*}
	where
	the errors are bounded as
	\[
	|\cR_1|+ |\nabla\cR_1|+ |\nabla^2\cR_1|+ |\cR_2|
	+ |\cR_3|\lesssim \frac{1}{r}\,.
	\]
      \end{lemma}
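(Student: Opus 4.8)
The plan is to recognize $v$ as a one-dimensional oscillatory integral and apply the stationary phase method. Writing the point in polar coordinates, $x=rE(\te)$, so that $x\cdot E(\phi)=r\cos(\phi-\te)$, and substituting $\psi=\phi-\te$, the definition of $v$ becomes
\[
v(x)=\int_{-\pi}^{\pi}e^{-ir\cos\psi}\,g(\te+\psi)\,d\psi\,.
\]
Since $g$ does not depend on $x$, every Cartesian derivative falls on the exponential, $\partial_{x_j}v=\int_{\TT}\bigl(-iE_j(\phi)\bigr)e^{-ix\cdot E(\phi)}g(\phi)\,d\phi$, so that $v$, $\nabla v$ and $\nabla^2v$ are all oscillatory integrals with the same phase $-\cos(\phi-\te)$ and an amplitude equal to a fixed smooth bounded function of $\phi$ times $g(\phi)$. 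In the same way $\partial_rv=\int_{\TT}\bigl(-i\cos(\phi-\te)\bigr)e^{-ix\cdot E(\phi)}g(\phi)\,d\phi$, while one integration by parts in $\phi$ gives $\partial_\te v=\int_{\TT}e^{-ix\cdot E(\phi)}g'(\phi)\,d\phi$, again of the same shape.

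Next I would treat each of these integrals by stationary phase. The phase $\psi\mapsto-\cos\psi$ has exactly two critical points on the circle, $\psi=0$ (critical value $-1$, second derivative $+1$, signature $+1$) and $\psi=\pi$ (critical value $+1$, second derivative $-1$, signature $-1$), both nondegenerate, and $|\sin\psi|$ is bounded below away from them; inserting a smooth partition of unity, repeated integration by parts shows the off-critical part is $O(r^{-N})$ for every $N$. Hörmander's stationary phase formula near each critical point then yields, for an amplitude $a$,
\[
\int_{-\pi}^{\pi}e^{-ir\cos\psi}a(\psi)\,d\psi=\Bigl(\tfrac{2\pi}{r}\Bigr)^{1/2}\Bigl[e^{-ir+\frac{i\pi}{4}}a(0)+e^{ir-\frac{i\pi}{4}}a(\pi)\Bigr]+O\bigl(r^{-3/2}\bigr)\,,
\]
the implicit constant being controlled by finitely many sup-norms of $a$. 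Taking $a(\psi)=g(\te+\psi)$ and using the antipodal conjugation symmetry $g(\te+\pi)=\overline{g(\te)}$ satisfied by the Fourier density of a real-valued solution, the two endpoint terms combine into $2\Real\!\bigl[e^{i\pi/4-ir}g(\te)\bigr]=\sqrt2\,\bigl[g\R(\te)\cos(r-\tfrac\pi4)+g\I(\te)\sin(r-\tfrac\pi4)\bigr]$, which is the leading term for $v$. Choosing instead $a(\psi)=-i\cos\psi\,g(\te+\psi)$ (so $a(0)=-ig(\te)$ and $a(\pi)=i\,\overline{g(\te)}$) and $a(\psi)=g'(\te+\psi)$, and rearranging the resulting combinations of $e^{\pm i(r\pm\pi/4)}$ trigonometrically, gives in the same way the stated leading terms for $\partial_rv$ and $\partial_\te v$.

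It remains to bound the errors. Since $\nabla^kv$ for $k\le2$ is an oscillatory integral whose amplitude is a fixed smooth bounded function of $\phi$ times $g(\phi)$, while $\partial_\te v$ carries the amplitude $g'$, the remainder in the displayed expansion is $O(r^{-3/2})$ with a constant depending on a fixed number of derivatives of $g$ (and of $g'$ for $\partial_\te v$); the hypothesis $m>9/2$ is precisely what makes the Sobolev embedding $H^m(\TT)\hookrightarrow C^4(\TT)$ available, so this holds uniformly in $\te$. Dividing by $(8\pi/r)^{1/2}$ converts these $O(r^{-3/2})$ remainders into the claimed $O(1/r)$ bounds for $\cR_1,\cR_2,\cR_3$ and for $\nabla\cR_1,\nabla^2\cR_1$, once one also checks that differentiating the explicit leading terms in $r$ produces only further $O(r^{-3/2})$ contributions (for instance from $\partial_r r^{-1/2}$), so that the leading parts of $\nabla v$ and $\nabla^2v$ really are the Cartesian derivatives of the leading part of $v$.

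I expect the genuinely delicate point to be this last step — the uniform bookkeeping of the stationary phase remainders under differentiation, i.e.\ checking that all error terms are honestly $O(r^{-3/2})$ with $\te$-independent constants, which is exactly where the regularity threshold $m>9/2$ is used — rather than the leading asymptotics, which is a routine two-point stationary phase calculation. Alternatively, since this is Proposition~2.2 and Remark~3.2 of \cite{EPR20}, one may simply invoke it.
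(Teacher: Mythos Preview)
Your proposal is correct and follows the natural stationary phase argument; the paper itself does not give a proof of this lemma but simply cites it as Proposition~2.2 and Remark~3.2 of~\cite{EPR20}, a fact you already note at the end. One small point worth flagging: the antipodal symmetry $g(\te+\pi)=\overline{g(\te)}$ that you invoke to combine the two stationary points is not stated explicitly in the lemma, but it is indeed implicit (the displayed asymptotics are real-valued and involve only $g(\te)$, not $g(\te+\pi)$), and it holds for the density~$f$ of the real-valued random wave to which the lemma is applied.
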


The following theorem provides very precise asymptotic information about the
critical points of~$v$:

\begin{lemma}\label{prop: critical points}
Assume that $m>9/2$, that $g$ does not vanish on~$\TT$, and that all the
critical points of $|g|$ are non-degenerate. If $\phi^*$ is a critical
point of~$|g|$, then for each large enough positive integer~$n$ there
exists a critical point~$(r_n^*,\te_n^*)$ of~$v$ such that
\[
|\phi^*-\te^*_n|+ \big|\pi n+ \tfrac{\pi}{4}+\arg g(\phi^*)-r^*_n\big|\lesssim\frac1n\,.
\]
Conversely, if $(r^*,\te^*)$ is a critical point of~$v$, there is some
critical point~$\phi^*$ of~$|g|$ such that
\[
|\phi^*-\te^*|\lesssim\frac1{r^*}\,.
\]
\end{lemma}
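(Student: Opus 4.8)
The plan is to combine the asymptotic expansions of Lemma~\ref{L.stationary} with a quantitative zero‑counting argument, after rewriting everything in ``polar form''. Since $g$ does not vanish on the compact set $\TT$ we have $|g|\ge c>0$, and $g\in C^{2}(\TT)$ because $m>9/2$; near any point I pick a $C^{2}$ branch $\alpha:=\arg g$, so $g=|g|e^{i\alpha}$ and the logarithmic derivative $\alpha'=\Imag(g'/g)$ is bounded on $\TT$. Setting $\psi:=r-\tfrac\pi4-\alpha(\te)$ and using the addition formulas, Lemma~\ref{L.stationary} turns into
\begin{align*}
  v&=\Big(\tfrac{8\pi}{r}\Big)^{1/2}\big[\,|g(\te)|\cos\psi+\cR_1\,\big],\\
  \pd_r v&=\Big(\tfrac{8\pi}{r}\Big)^{1/2}\big[-|g(\te)|\sin\psi+\cR_2\,\big],\\
  \pd_\te v&=\Big(\tfrac{8\pi}{r}\Big)^{1/2}\big[\,|g|'(\te)\cos\psi+|g(\te)|\alpha'(\te)\sin\psi+\cR_3\,\big],
\end{align*}
with $|\cR_1|+|\cR_2|+|\cR_3|\lesssim 1/r$. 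Dividing by the positive factor $(8\pi/r)^{1/2}$, the equation $\nabla v=0$ reads $\Phi_0(r,\te)+E(r,\te)=0$ with
\[
  \Phi_0(r,\te):=\big(-|g(\te)|\sin\psi,\;\, |g|'(\te)\cos\psi+|g(\te)|\alpha'(\te)\sin\psi\big),\qquad \sup_B|E|\lesssim \tfrac1r,
\]
the supremum being over a ball $B$ around the point under consideration. A direct computation shows that at every point with $\psi\in\pi\ZZ$ and $|g|'(\te)=0$ one has $\Phi_0=0$ and $\det D\Phi_0=-|g(\te)|\,|g|''(\te)$, which is nonzero exactly when $\te$ is a \emph{nondegenerate} critical point of $|g|$.

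For the direct assertion, fix a nondegenerate critical point $\phi^{*}$ of $|g|$ and set $r_n:=\pi n+\tfrac\pi4+\alpha(\phi^{*})$, so $\Phi_0(r_n,\phi^{*})=0$. Translating $r\mapsto r_n+\rho$ one checks that $\Phi_0(r_n+\rho,\te)=(-1)^{n}\widetilde\Phi_0(\rho,\te)$ for a fixed $C^{1}$ map $\widetilde\Phi_0$ independent of $n$ and vanishing nondegenerately at $(0,\phi^{*})$; hence there are $\rho_0,c_0>0$ \emph{independent of $n$} with $|\Phi_0(r,\te)|\ge c_0\big(|r-r_n|+|\te-\phi^{*}|\big)$ on $\{|r-r_n|+|\te-\phi^{*}|\le\rho_0\}$, and $(r_n,\phi^{*})$ is the unique, nondegenerate zero of $\Phi_0$ there. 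Now take $\rho(n):=K/n$ with $K$ large; since $|E|\le C/r\le 2C/(\pi n)$ on this ball, for $K>2C/(\pi c_0)$ and $n$ large the homotopy $\Phi_0+tE$, $t\in[0,1]$, never vanishes on $\pd B_{\rho(n)}(r_n,\phi^{*})$, so the Brouwer degree of $\Phi=\Phi_0+E$ on $B_{\rho(n)}(r_n,\phi^{*})$ equals $\deg(\Phi_0,\cdot)=\pm1\neq0$. Therefore $\nabla v$ has a zero $(r_n^{*},\te_n^{*})$ with $|r_n^{*}-r_n|+|\te_n^{*}-\phi^{*}|\le\rho(n)\lesssim 1/n$, which is the claim since $r_n=\pi n+\tfrac\pi4+\arg g(\phi^{*})$ (changing the branch of $\arg g$ only shifts $n$ by an even integer).

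For the converse, let $(r^{*},\te^{*})$ be a critical point of $v$ with $r^{*}\gg1$ and put $\psi^{*}:=r^{*}-\tfrac\pi4-\alpha(\te^{*})$. From $\pd_r v=0$ and $|g|\ge c$ I get $|\sin\psi^{*}|\le|\cR_2|/c\lesssim 1/r^{*}$, hence $|\cos\psi^{*}|\ge\tfrac12$ for $r^{*}$ large; feeding this into $\pd_\te v=0$ gives $||g|'(\te^{*})|\,|\cos\psi^{*}|\le|g(\te^{*})|\,|\alpha'(\te^{*})|\,|\sin\psi^{*}|+|\cR_3|\lesssim 1/r^{*}$, so $||g|'(\te^{*})|\lesssim 1/r^{*}$. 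Since all critical points of $|g|$ are nondegenerate, the zero set $Z$ of $|g|'$ is finite, $|g|'$ is bounded away from $0$ outside any fixed neighborhood of $Z$, and $||g|'(\te)|\ge\tfrac12|\,|g|''(\phi_j)|\,|\te-\phi_j|$ near each $\phi_j\in Z$; thus for $r^{*}$ large $\te^{*}$ must lie near some $\phi^{*}=\phi_j$ with $|\te^{*}-\phi^{*}|\lesssim||g|'(\te^{*})|\lesssim 1/r^{*}$, as desired.

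The main obstacle I anticipate is that Lemma~\ref{L.stationary} controls the error terms $\cR_2,\cR_3$ appearing in $\nabla v$ only in $C^{0}$, so a naive quantitative implicit function theorem is unavailable; the way around this is the Brouwer‑degree argument above, where the \emph{sharp} $O(1/n)$ localization is automatic because the nondegeneracy $|g|''(\phi^{*})\neq0$ forces $|\Phi_0|$ to grow linearly away from its zero while $|E|=O(1/r)$. The only extra care needed is uniformity of all the constants in $n$, which holds because, after shifting $r$ by $r_n$, the leading‑order map $\Phi_0$ is $n$‑independent up to the harmless sign $(-1)^{n}$.
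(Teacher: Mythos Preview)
Your argument is correct and follows the same overall strategy as the paper: compare $Dv$ (after stripping the $(8\pi/r)^{1/2}$ prefactor) with the explicit model map $\Phi_0=DV$, where $V=\Real[g(\te)e^{-i(r-\pi/4)}]$, identify the zeros of $\Phi_0$ as the pairs $(\pi n+\tfrac\pi4+\arg g(\phi^*),\phi^*)$ with $|g|'(\phi^*)=0$, check $\det D\Phi_0=-|g|\,|g|''\neq0$ there, and then perturb. The difference lies in the perturbation step. The paper invokes Thom's isotopy theorem, which needs $C^1$ control on the discrepancy $F:=DV-(r/8\pi)^{1/2}Dv$; this control is available, but only indirectly, because $F$ can be expressed through $\cR_1$ and its derivatives (differentiate the identity $(r/8\pi)^{1/2}v=V+\cR_1$), and Lemma~\ref{L.stationary} bounds $\nabla^2\cR_1$. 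You instead treat the error as $(\cR_2,\cR_3)$, for which only $C^0$ bounds are stated, and replace the isotopy argument by a Brouwer-degree computation on balls of radius $K/n$; the linear growth of $|\Phi_0|$ away from its nondegenerate zero versus the $O(1/n)$ size of the error makes the homotopy admissible and gives the $O(1/n)$ localization for free. This is a bit more elementary and sidesteps the need to extract the hidden $C^1$ estimate. Your converse is also more explicit than the paper's, which just asserts that the asymptotics of $Dv$ force far-out critical points to be of the stated form; your two-step estimate ($|\sin\psi^*|\lesssim 1/r^*$ from $\pd_r v=0$, then $||g|'(\te^*)|\lesssim 1/r^*$ from $\pd_\te v=0$, then nondegeneracy) is exactly what is behind that assertion.
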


\begin{proof}
  Let us consider the function
  \[
    V\coloneqq \Real \big[g(\theta)e^{-i(r-\tfrac{\pi}{4})}\big]=g\I(\te)\, \sin(r-\tfrac{\pi}{4})+ g\R(\te)\,\cos(r-\tfrac{\pi}{4})\,,
  \]
  whose critical points $(r^*,\te^*)$
  are the solutions to the equations
\[
\Imag \big [g(\theta^*)e^{-i(r^*-\tfrac{\pi}{4})}\big]=0\,,\qquad \Real \big [g'(\theta^*)e^{-i(r^*-\tfrac{\pi}{4})}\big]=0\,.
\]
Writing $g=|g|
  e^{i\arg g}$, an elementary calculation shows that $(r^*,\te^*)$ is a critical
  point of~$V$ if and only if
  $r^*= \arg g(\te^*)+\frac\pi4+\pi n$ for
  some integer~$n$ and $\Real
  [\overline{g(\te^*)}g'(\te^*) \big]=0$. As $g$ does not vanish
  on~$\TT$, the latter condition
  simply means that $\te^*$ is a critical point of~$|g|$. Furthermore,
  the Hessian of~$V$ at the critical points is
  \[
D^2V(r^*,\te^*)=(-1)^n \left(
\begin{array}{cc}
	-|g(\te^*)| & |g(\te^*)| (\arg g)'(\te^*)\\
	|g(\te^*)| (\arg g)'(\te^*)& |g|''(\te^*)-|g(\te^*)|[(\arg g)'(\te^*)]^2\\
\end{array}
\right)\,.
  \]
  Therefore,
  \begin{equation}\label{HessV}
\det D^2V(r^*,\te^*)= - |g(\te^*)|\,|g|''(\te^*)\neq0
\end{equation}
because the critical points of~$|g|$ are, by hypothesis, nondegenerate.

  Let us now consider the function
  \[
F(r,\te):=DV(r,\te)- \bigg(\frac{r}{8\pi}\bigg)^{\frac12} Dv(r,\te)\,,
  \]
  where $DV:=(\pd_rV,\pd_\te V)$. Lemma~\ref{L.stationary} ensures
  that
  \[
|F(r,\te)|+ |D F(r,\te)|\lesssim\frac1r\,.
  \]
  As the critical points of~$V$ are uniformly non-degenerate
  by~\eqref{HessV}, Thom's isotopy theorem (as stated, e.g.,
  in~\cite{EPS13}) ensures that $v$ has a critical point at a distance
  at most $C/n$ to each of the critical points $(r^*,\te^*)$ of~$V$ as
  described above, provided that~$n$ is large enough. Furthermore, the asymptotic
  formulas for $Dv$ presented in Lemma~\ref{L.stationary}
  guarantee that all critical points of~$v$ that are far enough from
  the origin must be of this form. The lemma is then proven.
\end{proof}

\subsection{Proof of Theorem~\ref{T.as}}

As $s>5$, Proposition~\ref{P.reg} ensures that $f\in H^{s'}(\TT)$
almost surely for
some $s'>\frac92$. Therefore, if one can prove that, with probability~1, $f$
does not vanish on~$\TT$ and all the critical points of~$|f|$ are nondegenerate, Theorem~\ref{T.as} will follow as an easy
consequence of Lemma~\ref{prop: critical points}.

Proving the first part of this assertion is completely standard, but
the second part is quite harder. In both cases, the proof relies on Bulinskaya's lemma, which one can state as follows~\cite[Proposition 6.11]{AW09}:

\begin{lemma}[Bulinskaya]\label{L.Bulin}
Let $Y :\TT\to\RR^2$ be a random function that is of class $C^1(\TT)$
almost surely. For each $\phi\in\TT$, assume that the random variable
$Y(\phi)$ has a probability density
$\rho_{Y(\phi)}:\RR^2\to[0,\infty)$ that is bounded in some fixed
neighborhood of the origin. Then
\[
\bP\{Y(\phi)=0\; \text{ for some } \phi\in\TT\}=0\,.
\]
\end{lemma}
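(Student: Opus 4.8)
The plan is to run the standard covering argument underlying Bulinskaya's lemma, exploiting crucially that the domain $\TT$ is one-dimensional while the target $\RR^2$ is two-dimensional: partitioning $\TT$ into $N$ arcs of length $\approx 1/N$ produces, via the union bound, an estimate of size $\approx N\cdot N^{-2}=N^{-1}$, which tends to zero.

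First I would reduce to the case of a uniformly bounded derivative. For $M>0$ set $A_M:=\{Y\in C^1(\TT)\text{ and }\norm{Y'}_{L^\infty(\TT)}\le M\}$. Since $Y$ is of class $C^1(\TT)$ almost surely, $\bP(A_M)\to1$ as $M\to\infty$, so it suffices to prove that, for each fixed $M$,
\[
\bP\big(\{Y(\phi)=0\text{ for some }\phi\in\TT\}\cap A_M\big)=0\,.
\]
Let $\delta>0$ and $C>0$ be such that $\rho_{Y(\phi)}(y)\le C$ whenever $|y|<\delta$ and $\phi\in\TT$, which is the content of the boundedness hypothesis on the densities.

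Next, for each integer $N$ with $\pi M/N<\delta$, partition $\TT$ into $N$ arcs $I_1,\dots,I_N$ of length $2\pi/N$, with respective midpoints $\phi_1,\dots,\phi_N$. On the event $A_M$, if $Y$ vanishes at some $\phi\in I_j$ then $|Y(\phi_j)|=|Y(\phi_j)-Y(\phi)|\le M\,|\phi_j-\phi|\le \pi M/N$, so that $\{Y(\phi)=0\text{ for some }\phi\in I_j\}\cap A_M\subseteq\{|Y(\phi_j)|\le \pi M/N\}$. Using the density bound and that a disk of radius $\rho$ in $\RR^2$ has area $\pi\rho^2$,
\[
\bP\big(|Y(\phi_j)|\le \tfrac{\pi M}{N}\big)=\int_{\{|y|\le \pi M/N\}}\rho_{Y(\phi_j)}(y)\,dy\le C\pi\Big(\frac{\pi M}{N}\Big)^2=\frac{C\pi^3M^2}{N^2}\,.
\]
Summing over $j=1,\dots,N$ and applying the union bound,
\[
\bP\big(\{Y(\phi)=0\text{ for some }\phi\in\TT\}\cap A_M\big)\le\sum_{j=1}^N\bP\big(|Y(\phi_j)|\le \tfrac{\pi M}{N}\big)\le\frac{C\pi^3M^2}{N}\,,
\]
which tends to $0$ as $N\to\infty$. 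Hence the left-hand side is $0$ for every $M$, and letting $M\to\infty$ finishes the proof.

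The computations above are entirely routine; the only points that need a little care — and what I would flag as the main obstacle, albeit a minor one — are measurability matters: one must check that $A_M$ and $\{Y(\phi)=0\text{ for some }\phi\in\TT\}$ are measurable events, which follows from the almost sure continuity of $Y$ and $Y'$ by replacing the suprema over $\phi\in\TT$ with suprema over a fixed countable dense subset. One should also ensure that the density bound is uniform in $\phi$ on a fixed neighborhood of the origin, as is implicit in the statement; were it only known locally in $\phi$, one would first use compactness of $\TT$ to extract a finite subcover and a single pair of constants $C,\delta$, after which the argument is unchanged.
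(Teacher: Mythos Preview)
Your argument is correct and is precisely the standard covering proof of Bulinskaya's lemma. Note, however, that the paper does not prove this statement at all: it is quoted as \cite[Proposition~6.11]{AW09} and used as a black box, so there is no ``paper's own proof'' to compare against. Your write-up supplies exactly the argument one finds in that reference, and the caveats you raise (measurability via a countable dense set, uniformity in~$\phi$ of the density bound) are the right ones to flag.
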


Armed with Bulinskaya's lemma, it is easy to show that, almost surely, $f$ does not vanish:

\begin{lemma}
With probability~$1$, $f$ does not vanish on~$\TT$.
\end{lemma}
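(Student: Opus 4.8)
The plan is to apply Bulinskaya's lemma (Lemma~\ref{L.Bulin}) to the $\RR^2$-valued random field $Y:=(\Real f,\Imag f)\colon\TT\to\RR^2$, whose zero set is exactly the zero set of~$f$. First I would verify the regularity hypothesis of that lemma: since $s>5$, Proposition~\ref{P.reg} gives $f\in H^{s'}(\TT)$ almost surely for some $s'>\tfrac92$, and the Sobolev embedding $H^{s'}(\TT)\hookrightarrow C^1(\TT)$ (valid already for $s'>\tfrac32$) then shows that $Y$ is of class $C^1(\TT)$ almost surely.

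The remaining hypothesis is that, for each fixed $\phi\in\TT$, the centered Gaussian vector $Y(\phi)\in\RR^2$ has a probability density that is bounded near the origin; by Gaussianity this amounts to checking that its covariance matrix is non-degenerate. I would compute it using the orthogonality relations for the coefficients $a_l$ exploited in Lemma~\ref{LemmaNonDeg}, which together with the constraint $a_l=(-1)^l\overline{a_{-l}}$ also give $\bE(a_la_{l'})=2(-1)^l\de_{l,-l'}$. A short calculation then yields $\bE|f(\phi)|^2=\pi^{-2}\zeta(2s)$ and $\bE f(\phi)^2=-\pi^{-2}(1-2^{1-2s})\zeta(2s)$, both independent of~$\phi$. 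Since $\bE f(\phi)^2$ is real, $\Real f(\phi)$ and $\Imag f(\phi)$ are uncorrelated, hence independent, with variances $\pi^{-2}4^{-s}\zeta(2s)>0$ and $\pi^{-2}(1-4^{-s})\zeta(2s)>0$ respectively; thus $\var[Y(\phi)]$ is positive definite and, being independent of~$\phi$, gives $Y(\phi)$ a Gaussian density that is continuous and bounded by a $\phi$-independent constant. Bulinskaya's lemma then yields $\bP\{Y(\phi)=0\text{ for some }\phi\in\TT\}=0$, which is the assertion.

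I do not anticipate a genuine obstacle here: the argument is routine, the only computation being the covariance of $Y(\phi)$, and even the positivity of its diagonal entries is immediate since the independence of the $\{a_l\}_{l>0}$ makes $\var(\Real f(\phi))$ and $\var(\Imag f(\phi))$ sums of nonnegative contributions, of which already the $l=2$ and $l=1$ terms are strictly positive. (By contrast, the truly delicate part of the proof of Theorem~\ref{T.as} is the companion statement that all critical points of $|f|$ are nondegenerate almost surely, which will require a more elaborate application of Bulinskaya's lemma to a suitable jet field.)
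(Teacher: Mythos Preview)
Your proof is correct and follows essentially the same route as the paper's: you apply Bulinskaya's lemma to the Gaussian field $Y=(\Real f,\Imag f)$, verify its covariance is diagonal with strictly positive entries (your values $\pi^{-2}4^{-s}\zeta(2s)$ and $\pi^{-2}(1-4^{-s})\zeta(2s)$ agree exactly with the paper's $\pi^{-2}\sum_{l>0,\text{ even}}l^{-2s}$ and $\pi^{-2}\sum_{l>0,\text{ odd}}l^{-2s}$), and conclude. The only cosmetic difference is that you obtain the covariance via $\bE|f|^2$ and $\bE f^2$ rather than computing the entries directly, and you make the $C^1$ check explicit where the paper leaves it implicit from the surrounding discussion.
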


\begin{proof}
By the definition of~$u$, cf.~Equations~\eqref{defu2} and~\eqref{deff}, $\tilde Y(\phi):=(f\R(\phi),f\I(\phi))$ is a Gaussian random field
$\tilde Y:\TT\to\RR^2$ with zero mean. The covariance of~$\tilde Y(\phi)$ can
be computed just as in Lemma~\ref{LemmaNonDeg}, obtaining the
nondegenerate matrix
\begin{align*}
  \var \tilde Y(\phi)&=\bE[\tilde Y(\phi)\otimes \tilde Y(\phi)]= \left(
                                                                    \begin{array}{cc}
                                                                      \pi^{-2}\sum_{l>0, \text{even}}l^{-2s} & 0 \\
                                                                      0 & \pi^{-2}\sum_{l>0, \text{odd}}l^{-2s} \\
                                                                    \end{array}
                                                                  \right)=:\Sigma\,.
\end{align*}
Therefore, $\tilde Y(\phi)$ has a bounded probability density
function
\[
  \rho_{\tilde Y(\phi)}(y):= \frac{\exp\Big({-\frac12y\cdot \Sigma^{-1}y}\Big)}{2\pi (\det \Sigma)^{1/2}}
\]
on~$\RR^2$ because $\Sigma$ is a nondegenerate matrix. Lemma~\ref{L.Bulin} then ensures that $\tilde Y$ does not vanish with
probability~1. As the zeros of~$\tilde Y$ and~$f$ obviously coincide, the
lemma follows.
\end{proof}

The crux of the proof of Theorem~\ref{T.as} is to show that the
critical points of~$|f|$ are nondegenerate. This is not direct because
$|f|$ is {not}\/ a Gaussian variable, and showing that it has a
bounded probability density requires some work. The main
ingredient of the proof is the estimate we present in the following
lemma. The proof is somewhat involved, so we have relegated it to the
next subsection in order to streamline the presentation of the proof
of Theorem~\ref{T.as}. To state the auxiliary result, we will
write points in~$\RR^6$ as
\[
z=(z',z'')\in\RR^4\times\RR^2
\]
with $z':=(z_1,z_2,z_3,z_4)$ and $z'':=(z_5,z_6)$.

\begin{lemma}\label{L.intQ}
  Consider the nonnegative rational function on~$\RR^6$ given by
  \begin{align}\label{defQz}
Q(z):= |z'|^2+ \frac{(z_5-z_1z_3)^2}{z_2^2}+\frac{[(z_5-z_1z_3)^2+z_2^2(z_1z_4+z_3^2-z_6)]^2}{z_2^6}\,.
  \end{align}
  For any constant $c>0$,
  \[
\sup_{|z''|<\frac12}\int_{\RR^4}\frac{e^{-c \,Q(z)}}{z_2^2}\,dz' <\infty\,.
\]
\end{lemma}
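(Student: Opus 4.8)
The strategy is to split the $z'=(z_1,z_2,z_3,z_4)$ integration so that the potentially dangerous factor $1/z_2^2$ is controlled by the Gaussian-type decay coming from $e^{-cQ(z)}$, uniformly for $|z''|<\frac12$. The key observation is that the three summands in $Q(z)$ are each nonnegative, so $e^{-cQ(z)}\le e^{-c|z'|^2}\,e^{-c(z_5-z_1z_3)^2/z_2^2}\,e^{-c[\cdots]^2/z_2^6}$, and we may throw away whichever factor is convenient in each region. First I would do the $z_4$ integral: only the last term of $Q$ depends on $z_4$, and it is of the form $[P(z_1,z_2,z_3,z'')+z_1z_4 z_2^2]^2/z_2^6 = [\,\cdots + z_1 z_4/z_2^{\,}]^2$ type; completing the square in $z_4$ shows $\int_{\RR} e^{-c[\cdots+z_1z_2^2 z_4]^2/z_2^6}\,dz_4 \lesssim z_2^3/(|z_1|\,z_2^2)=z_2/|z_1|$ when $z_1\neq0$ (and the whole $z_4$-integral is simply bounded by a Gaussian in the remaining variables when we instead keep $e^{-c|z'|^2}$, so the small-$|z_1|$ region is harmless). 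This is the mechanism by which powers of $z_2$ in the numerator are generated to beat the $1/z_2^2$.

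\textbf{Main steps.} After integrating out $z_4$, I would be left with an integral over $(z_1,z_2,z_3)$ of something like $z_2^{-2}\cdot \min\{1,\,z_2/|z_1|\}\cdot e^{-c|z'|^2}\,e^{-c(z_5-z_1z_3)^2/z_2^2}$, and I would further integrate $z_3$: the factor $e^{-c(z_5-z_1z_3)^2/z_2^2}$ as a function of $z_3$ integrates (for $z_1\neq0$) to $\lesssim z_2/|z_1|$, while keeping $e^{-cz_3^2}$ shows it is also bounded. Combining, the worst case gives an integrand over $(z_1,z_2)$ of order $z_2^{-2}\cdot z_2\cdot z_2/|z_1|^2 \cdot (\text{Gaussian}) = |z_1|^{-2}(\text{Gaussian in }z_1,z_2)$, which is not integrable near $z_1=0$ — signalling that near $z_1=0$ one must instead retain the plain Gaussian factors $e^{-c(z_1^2+z_3^2+z_4^2)}$ and only use $e^{-cz_2^2/\,}$-type decay plus the $e^{-c(z_5-z_1z_3)^2/z_2^2}$ bound to absorb the single $1/z_2^2$: there, $\int_{\RR}z_2^{-2}e^{-c(z_5-z_1z_3)^2/z_2^2}\,dz_2$ near $z_2\to0$ behaves like $\int z_2^{-2}e^{-c'/z_2^2}dz_2<\infty$ provided $z_5-z_1z_3\neq0$, and the locus $z_5=z_1z_3$ is codimension one so contributes nothing after the remaining integrations. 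The uniformity in $z''$ with $|z''|<\frac12$ is automatic because $z''$ enters only through the shifts $z_5-z_1z_3$ and $z_1z_4+z_3^2-z_6$, and all bounds above are uniform over bounded shifts.

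\textbf{Executing it cleanly.} Rather than carve $\RR^4$ into cases by hand, the tidiest route is: (i) rescale $z_2\mapsto z_2$, $z_4 \mapsto z_4/z_2^2$ (Jacobian $z_2^2$, which cancels the troublesome $z_2^{-2}$!) — then $Q$ becomes a polynomial-plus-rational expression in which the only remaining denominators are powers of $z_2$ inside $(z_5-z_1z_3)^2/z_2^2$, and the measure is the flat Lebesgue measure; (ii) in the new variables bound $e^{-cQ}\le e^{-c(z_1^2+z_2^2 z_4^2 + \cdots)}$ selecting factors region by region as above; (iii) integrate $z_4$ first (Gaussian in $z_2 z_4$, giving a harmless $1/|z_2|$ which is then controlled by the $e^{-cz_2^2}$ available from $|z'|^2\ge z_2^2$ after noting that the $z_2^2$ Jacobian already paid the debt), then $z_1,z_3$, then $z_2$. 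I expect the main obstacle to be organizing the case distinction near the ``bad'' set $\{z_2=0\}\cup\{z_1=0\}\cup\{z_5=z_1z_3\}$ so that in every region at least one of the three exponential factors in $Q$ supplies enough decay while the others are simply dropped; once the substitution $z_4\mapsto z_4/z_2^2$ is made the $1/z_2^2$ weight disappears entirely and what remains is a routine, if slightly tedious, Gaussian estimate uniform in $|z''|<\tfrac12$.
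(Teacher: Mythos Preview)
Your opening move --- integrating out $z_4$ as a Gaussian --- is exactly what the paper does first, so the instinct is right. But the proposal does not close, for two concrete reasons.

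First, the ``codimension one so contributes nothing'' step is wrong. When you integrate $z_2$ using only the middle term of~$Q$, you get
\[
\int_{\RR}\frac{e^{-c(z_5-z_1z_3)^2/z_2^2}}{z_2^2}\,dz_2 \;=\; \frac{\sqrt{\pi/c}}{|z_5-z_1z_3|}\,,
\]
and a singularity of type $1/|\text{distance}|$ across a codimension-one hypersurface is \emph{not} integrable (this is the one-dimensional $\int |t|^{-1}dt$). So after this step the remaining $(z_1,z_3,z_4)$ integral diverges along $\{z_5=z_1z_3\}$; one must bring in the third summand of $Q$ in an essential way, not just drop it. Relatedly, the uniformity in $z''$ is not automatic: the bound above degenerates like $1/|z_5|$ near $z_1=0$, and indeed the paper is forced to treat $z_5=0$ and $z_5\neq0$ by completely different arguments.

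Second, the substitution $z_4\mapsto z_4/z_2^2$ does kill the Jacobian factor, but it transfers the trouble rather than removing it: in the new variable the only a~priori Gaussian decay in $z_4$ coming from $|z'|^2$ is $e^{-c\,z_2^4 z_4^2}$, whose $z_4$-integral is $\sim z_2^{-2}$. You recover exactly the singularity you thought you had cancelled. Combining this with the $T_3$-term does improve matters (you get $\sim (|z_2|\sqrt{z_1^2+z_2^2})^{-1}$), but the residual exponential factor picked up by completing the square is itself a complicated rational function of $(z_1,z_2,z_3,z'')$, and you are essentially back at the starting difficulty.

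What the paper actually does: it passes to the variables $\varrho:=z_1z_3-z_5$ and $\tau:=\varrho/z_2$ (so that the dangerous $z_2$-denominators become polynomial), integrates $z_4$ as a genuine Gaussian, then switches to polar coordinates $(\sigma,\alpha)$ in the $(z_3,\tau)$-plane. After localising to $|\varrho|\le 1$ (the complement is harmless) the remaining integral has all its singular behaviour concentrated near two explicit points in the $(\tilde\varrho,\alpha)$-cylinder, which are handled by a further polar blow-up, with $z_5=0$ treated separately via an incomplete-Gamma bound. The case analysis is unavoidable; your sketch correctly anticipates that this is the ``main obstacle'', but the specific shortcuts you propose do not survive it.
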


Assuming for the moment that this technical lemma holds, proving that
the critical points of~$|f|$ are nondegenerate almost surely is straightforward:

\begin{lemma}
With probability~$1$, all the critical points of $|f|$ are nondegenerate.
\end{lemma}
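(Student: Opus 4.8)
The plan is to apply Bulinskaya's lemma (Lemma~\ref{L.Bulin}) to a suitable $\RR^2$-valued random field whose zeros detect the degenerate critical points of $|f|$. Since $f$ does not vanish almost surely, on the (random, open, full-measure) set where $f\neq0$ the function $|f|$ is smooth, and $\phi$ is a degenerate critical point of $|f|$ precisely when $|f|'(\phi)=0$ and $|f|''(\phi)=0$. First I would express these two conditions in terms of the Gaussian vector $(f\R,f\I,f\R',f\I',f\R'',f\I'')(\phi)$. Writing $f=|f|e^{i\arg f}$, one computes, just as in the proof of Lemma~\ref{prop: critical points}, that $|f|'=0 \iff \Real(\overline{f}f')=0$ and, on that locus, $|f|''$ is a rational expression in $(f,f',f'')$ with denominator a power of $|f|^2$ (equivalently of $f\R^2+f\I^2$). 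Thus I would set
\[
Y(\phi):=\Big(\Real\big(\overline{f(\phi)}f'(\phi)\big),\; |f|''(\phi)\Big)\in\RR^2,
\]
interpreted on $\{f\neq0\}$, and note that $\{Y(\phi)=0 \text{ for some }\phi\}$ contains the event that $|f|$ has a degenerate critical point.

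Next I would verify the hypotheses of Bulinskaya's lemma for $Y$. Almost sure $C^1$ regularity of $Y$ is immediate since $f\in H^{s'}(\TT)$ with $s'>\tfrac92$ almost surely (by Proposition~\ref{P.reg}, as $s>5$), so $f$ is $C^4$ and $Y$ is $C^2$ on the open set $\{f\neq0\}$; some care is needed only to phrase Bulinskaya's conclusion on a random domain, which is handled by intersecting with the full-measure event $\{f \text{ has no zeros}\}$ established in the previous lemma. The substantive point is the bounded-density hypothesis: I must show that, for each fixed $\phi$, the law of $Y(\phi)$ has a density on $\RR^2$ that is bounded near the origin, uniformly in $\phi$ (by rotation invariance it suffices to treat one $\phi$). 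Here I would introduce the six-dimensional Gaussian vector $W:=(f\R,f\I,f\R',f\I',f\R'',f\I'')(\phi)$, whose covariance is a fixed nondegenerate matrix computed as in Lemma~\ref{LemmaNonDeg}, so $W$ has a bounded Gaussian density $\rho_W$ on $\RR^6$. Then $Y(\phi)=G(W)$ for an explicit rational map $G$, and the density of $Y(\phi)$ at a point $(a,b)$ near the origin is obtained by the coarea/disintegration formula as an integral of $\rho_W$ over the preimage $G^{-1}(a,b)$ against the appropriate Jacobian factor; after choosing coordinates this reduces, up to the change of variables matching the notation of Lemma~\ref{L.intQ} (with $z'=(f\R,f\I,f\R',f\I')$ playing the role of the fibre variables and $z''$ the $Y$-coordinates), to bounding
\[
\sup_{|z''|<\frac12}\int_{\RR^4}\frac{e^{-c\,Q(z)}}{z_2^2}\,dz'
\]
for some $c>0$, which is exactly the content of Lemma~\ref{L.intQ}. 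The $z_2^{-2}$ weight is precisely the Jacobian factor arising because the constraint $|f|''=z_6$ is solved for $z_6$ after dividing by $z_2^2=f\I^2$ (one may assume $f\I\neq0$ on a full-measure set).

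With Lemma~\ref{L.intQ} in hand, Bulinskaya's lemma yields $\bP\{Y(\phi)=0\text{ for some }\phi\in\TT\}=0$, and hence with probability~$1$ there is no $\phi$ with $|f|'(\phi)=|f|''(\phi)=0$; combined with the almost sure non-vanishing of $f$, this says every critical point of $|f|$ is nondegenerate almost surely. The main obstacle is the analytic estimate of Lemma~\ref{L.intQ}: the integrand degenerates badly as $z_2\to0$, where both the explicit $z_2^{-2}$ weight and the $z_2^{-6}$ inside $Q$ blow up, and one must see that the Gaussian-type decay $e^{-cQ(z)}$ coming from the last two terms of $Q$ beats these singularities after integrating out $z_1,z_3,z_4$ — this is the content deferred to the following subsection, and it is where the real work lies, the probabilistic reduction above being essentially bookkeeping. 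I would therefore organise the write-up so that the present lemma's proof is the short deduction just sketched, citing Lemma~\ref{L.intQ}, Lemma~\ref{L.Bulin}, Proposition~\ref{P.reg}, and the preceding non-vanishing lemma.
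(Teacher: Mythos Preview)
Your strategy---apply Bulinskaya to a two-component field whose zeros are the degenerate critical points of~$|f|$, and bound its density near the origin by pushing forward the Gaussian law of the $2$-jet $Z=(f\R,f\I,f\R',f\I',f\R'',f\I'')$ through an explicit change of variables, then invoke Lemma~\ref{L.intQ}---is exactly the paper's. But two steps in your reduction do not go through as written.

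First, with your choice $Y_2=|f|''$ the map $G$ is rational with denominator $|f|^3=(z_1^2+z_2^2)^{3/2}$, so its Jacobian involves powers of $|f|$ rather than of $z_2=f\I$ alone; your explanation of the $z_2^{-2}$ weight does not match this $Y_2$, and the resulting integral is not the $Q$ of Lemma~\ref{L.intQ}. The paper sidesteps this by observing that on $\{Y_1=0,\,f\neq0\}$ the condition $|f|''=0$ is equivalent to the vanishing of the \emph{polynomial}
\[
|f|\,|f|''+(|f|')^2 \;=\; f\R f\R'' + f\I f\I'' + (f\R')^2 + (f\I')^2,
\]
and takes this as $Y_2$. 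Then $Y=\tH(Z)$ with $\tH$ the last two components of a polynomial map $H:\RR^6\to\RR^6$ whose Jacobian is exactly $-z_2^2$; this is what produces the $z_2^{-2}$ weight and the specific $Q=|H^{-1}|^2$ appearing in Lemma~\ref{L.intQ}. Second, the marginal integration is over $(z_1,z_2,z_3,z_5)=(f\R,f\I,f\R',f\R'')$, not over $(f\R,f\I,f\R',f\I')$ as you write: since $Y_1=z_1z_3+z_2z_4$ already determines $z_4$ once $z_1,z_2,z_3$ are known, $z_4$ cannot be a free fibre variable. With these two corrections your outline becomes the paper's proof.
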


\begin{proof}
  Let us start by noting that
  \[
|f|\,|f|'=\tfrac12(|f|^2)'= \Real \overline f\,f'= f\R f\R'+f\I f\I'\,.
  \]
  Differentiating this identity, we obtain
  \[
|f|\,|f|''+(|f|')^2=\Real \overline f\, f''+|f'|^2=  f\R f\R''+f\I
f\I''+(f\R')^2+(f\I')^2\,.
  \]
  Therefore, all the critical points of $|f|$ are nondegenerate if and
  only if
  \[
Y:=(f\R f\R'+f\I f\I',  f\R f\R''+f\I
f\I''+(f\R')^2+(f\I')^2):\TT\to\RR^2
  \]
  does not vanish.

  As $Y\in C^2(\TT)$ almost surely because $s>5$, in order to apply
  Bulinskaya's lemma we only need to show that $Y(\phi)$ has a
  probability density that is bounded in a neighborhood of the
  origin. The random variable $Y(\phi)$ is obviously not Gaussian, so
  in order to compute its density we need to argue in an indirect
  way.

  The starting point is the fact that the 2-jet of~$f$,
  \[
Z:=(f\R,f\I,f\R',f\I',f\R'',f\I'')\,,
  \]
  defines a Gaussian random variable $Z:\TT\to\RR^6$ with zero
  mean. Its variance
  \[
\var Z(\phi):=\bE[Z(\phi)\otimes Z(\phi)]\,,
  \]
which does not depend on~$\phi$,  can be computed from the definition
  \[
f(\phi):= \frac1{2\pi}\sum_{l\neq0} i^l a_l |l|^{-s} e^{i l\phi}
  \]
  by arguing just as in the proof of Lemma~\ref{LemmaNonDeg}. It turns
  out that $\var Z(\phi)=\Si$, where $\Si$ is the $6\times 6$ matrix
 \begin{equation*}
\Si:=\left(
	\begin{array}{cccccc}
		a_0 & 0 & 0 & 0 & -b_0 & 0 \\
		0 & a_1 & 0 & 0 & 0 & -b_1 \\
		0 & 0 & b_0 & 0 & 0 & 0 \\
		0 & 0 & 0 & b_1 & 0 & 0 \\
		-b_0 & 0 & 0 & 0 & c_0 & 0 \\
		0 & -b_1 & 0 & 0 & 0 & c_1 \\
	\end{array}
	\right)\,,
\end{equation*}
where
\begin{align*}
  a_i\coloneqq \pi^{-2}\sum_{m=0}^\infty  \sigma_{i+2m}^2\,, \qquad
  b_i\coloneqq \pi^{-2}\sum_{m=0}^\infty  \sigma_{i+2m}^2 ({i+2m})^2\,,\qquad c_i\coloneqq \pi^{-2}\sum_{m=0}^\infty  \sigma_{i+2m}^2 ({i+2m})^4
\end{align*}
and we have set $\si_l:=|l|^{-s}$ for $l\neq0$ and $\si_0:=0$. We have chosen to
write this formula in terms of~$\si_l$ so that it is apparent that the
result only uses the asymptotic properties of the
sequence~$\si_l$. Note that these sums are all convergent because
$s>5$.

The determinant of~$\Si$ is
\begin{align*}
\det \Si&= b_0 b_1 \left(b_0^2-a_0 c_0\right) \left(b_1^2-a_1 c_1\right)\,.
\end{align*}
As $a_ic_i>b_i^2$ strictly by the Cauchy--Schwartz inequality, the
matrix~$\Si$ is invertible.
Therefore, for each~$\phi\in\TT$, the probability density distribution
of~$Z(\phi)$ is given by
the Gaussian function
\[
g(z):=(2\pi)^{-3}(\det\Si)^{-\frac12} e^{-\frac12 z\cdot \Si^{-1} z}
\in C^\infty(\RR^6)\,.
\]

Consider now the map $H:\RR^6\to\RR^6$ given by
\begin{align}\label{Hz}
H(z)&\coloneqq\left(z_{1},z_{2},z_{3},z_{5},z_{1} z_{3}+z_{2} z_{4},z_{1} z_{5}+z_{2} z_{6}+z_{3}^2+z_{4}^2\right)\,.
\end{align}
This map is invertible outside the hyperplane $\{z_2=0\}$, with
inverse
\[
H^{-1}(z):= \left(z_{1},z_{2},z_{3},\frac{z_{5}-z_{1} z_{3}}{z_{2}},z_{4},-\frac{(z_{5}-z_{1} z_{3})^2}{z_{2}^3}-\frac{z_{1} z_{4}+z_{3}^2-z_{6}}{z_{2}}\right)\,,
\]
and its corresponding Jacobian determinant is $\det \nabla
H^{-1}(z)=-{z_{2}^{-2}}$. Therefore, the probability density
distribution of the random variable $H[Z(\phi)]$ is obtained by
pulling back with the map~$H$ the probability distribution of~$Z(\phi)$:
\begin{equation}\label{rhoHZ}
\rho_{H[Z(\phi)]}(z)=|\det \nabla
H^{-1}(z)| \, g[H^{-1}(z)]= (2\pi)^{-3}(\det\Si)^{-\frac12} z_2^{-2} e^{-Q_H(z)}\,.
\end{equation}
with $Q_H(z):= \frac12 H^{-1}(z)\cdot \Si^{-1} H^{-1}(z)$.

Now let $\tH:\RR^6\to\RR^2$ denote the last two components of the
map~\eqref{Hz}, that is,
\[
\tH(z):=\left(z_{1} z_{3}+z_{2} z_{4},z_{1} z_{5}+z_{2} z_{6}+z_{3}^2+z_{4}^2\right)\,.
\]
As the random variables $Y(\phi)$ and $Z(\phi)$
are related by
\[
Y(\phi)=\tH[Z(\phi)]\,,
\]
it then follows from~\eqref{rhoHZ} that the density
of $Y(\phi)$ is given by the marginal distribution
\begin{equation*}
	 \rho_{Y(\phi)}(z'')=\int_{\bR^4} \rho_{H[Z(\phi)]}(z)\, dz'\,.
       \end{equation*}

       Now notice that the function $Q(z)$ defined in~\eqref{defQz} is simply
       \[
Q(z)=|H^{-1}(z)|^2\,.
         \]
As the matrix $\Si$ is positive definite, therefore there is a positive
constant $c>0$
such that
\[
\rho_{Y(\phi)}(z'')\lesssim\int_{\bR^4}\frac{e^{-cQ(z)}}{z_2^2}\, dz'\,.
\]
Lemma~\ref{L.intQ} then ensures that
$\sup_{|z''|<\frac12}\rho_{Y(\phi)}(z'')\lesssim1$. Lemma~\ref{L.Bulin}
then guarantees that the random function~$Y$ does not vanish on~$\TT$
almost surely, and the theorem follows.
\end{proof}

Theorem~\ref{T.as} is then proven, modulo the proof of
Lemma~\ref{L.intQ}, which we will address next.

\subsection{Proof of the main technical lemma}

Let us now present the proof of Lemma~\ref{L.intQ}. To make the
exposition clearer, we will divide the proof in three steps.

\subsubsection{The integral~$\tI$}

The first step is to rewrite the integral
\begin{equation*}
I:=\int_{\RR^4} \frac{e^{-c Q(z)}}{z_2^2}dz'
\end{equation*}
in a more convenient way. For this, let us set
  \[
\vr:=z_1z_3-z_5\,,\qquad \tau:=\frac{z_1z_3-z_5}{z_2}\,.
  \]
  The map $z'\mapsto (\vr,\tau,z_3,z_4)$ is invertible outside the
  hyperplane $z_3=0$ and the set $\tau=0$. In terms of these variables, the integral reads
  as
  \[
I=\int_{\RR^4}\frac{e^{-c \,Q_1}}{|\vr z_3|}\,d\vr\,d\tau\, dz_3\,dz_4
  \]
  with
  \begin{align}
Q_1&:= Q_2 + z_4^2\bigg[1+\bigg( \frac{\tau (\vr+z_5)}{\vr
     z_3}\bigg)^2\bigg]+ 2z_4 (\tau^2+z_3^2-z_6)\frac{\tau^2 (\vr+z_5)}{\vr^2
     z_3}\,,\notag\\[1mm]
    Q_2&:= z_3^2+\tau^2 +\frac{\vr^2}{\tau^2}+ \bigg(\frac{\tau(\tau^2+z_3^2-z_6)}{\vr}\bigg)^2+\bigg(\frac{\vr+z_5}{z_3}\bigg)^2\,.\label{Q2}
  \end{align}
As $Q_1$ is a second order polynomial in~$z_4$, one can explicitly
integrate in this variable, obtaining
\[
I(z'')= \sqrt{\frac{\pi}{c}}
\int_{\RR^3}\frac{e^{-cQ_3}}{\sqrt{\vr^2z_3^2+\tau^2(\vr+z_5)^2}}\,
d\vr\, d\tau\, dz_3\,,
\]
with
\[
Q_3:= z_3^2+\tau^2 +\frac{\vr^2}{\tau^2}+ \bigg(\frac{\tau z_3(\tau^2+z_3^2-z_6)}{(z_3^2\vr^2+\tau^2(\vr+z_5)^2)^{1/2}}\bigg)^2+\bigg(\frac{\vr+z_5}{z_3}\bigg)^2\,.
\]

Let us now consider polar coordinates $(\si,\al)\in \RR^+\times\TT$, defined as
\[
z_3=:\si\cos\al\,,\qquad \tau=:\si\sin\al\,.
\]
Still denoting by $Q_2$ the expression of~\eqref{Q2} in these
variables, and similarly with the other functions~$Q_j$, we get
\[
Q_2=\frac{\vr^2}{\si^2}\csc^2\al+\Big(\frac{\vr+z_5}{\si}\Big)^2\sec^2\al+\si^2+ \bigg(\frac{\si(\si^2-z_6)\sin\al}\vr\bigg)^2\,.
\]
This enables us to write
\begin{align*}
I&= \sqrt{\frac{\pi}{c}}\int_{-\infty}^\infty
          \int_0^{2\pi}\int_0^\infty \frac{e^{-cQ_3}}{\sqrt{\vr^2
          \cos^2\al + (\vr +z_5)^2\sin^2\al}}\, d\si\, d\al\, d\vr\,.
\end{align*}

As $|z''|<\frac12$, the denominator is nonzero for $|\vr|>1$, so one obviously has
\[
\int_{\RR\backslash[-1,1]}
          \int_0^{2\pi}\int_0^\infty \frac{e^{-cQ_3}}{\sqrt{\vr^2
          \cos^2\al + (\vr +z_5)^2\sin^2\al}}\, d\si\, d\al\,
      d\vr\lesssim \int_{\RR}
          \int_0^\infty e^{-c(\si^2+\frac{\vr^2}{\si^2})}\, d\si\,d\vr\lesssim1\,.
\]
We can then write
\begin{equation}\label{tI}
I\lesssim 1+\int_{-1}^1
          \int_0^{2\pi}\int_0^\infty \frac{e^{-cQ_3}}{\sqrt{\vr^2
              \cos^2\al + (\vr +z_5)^2\sin^2\al}}\, d\si\, d\al\, d\vr=:1+ \tI\,.
        \end{equation}

        \subsubsection{The case $z_5=0$}
Let us start by assuming that $z_5=0$, so that
        \begin{align*}
\tI&= \int_{-1}^1
          \int_0^{2\pi}\int_0^\infty \frac{e^{-cQ_3}}{|\vr|}\, d\si\,
     d\al\, d\vr\leq2\int_0^1 \int_0^{2\pi}\int_{-\infty}^\infty \frac{e^{-c\si^2 -
            c\vr^{-2} \si^2(\si^2-z_6)^2\sin^2\al\cos^2\al}}{\vr}\, d\si\,
     d\al\, d\vr\,.
        \end{align*}
        The integral in~$\vr$ can be computed in terms of the
        incomplete Gamma function
        \[
\Ga(\la,x):=\int_x^\infty t^{\la-1} e^{-t}\,dt\,,
\]
obtaining
\[
\tI\leq  \int_0^{2\pi}\int_{-\infty}^\infty e^{-c\si^2}\Ga[0,c \si^2(\si^2-z_6)^2\sin^2\al\cos^2\al]\,d\si\,
     d\al\,.
\]
Then the bound
\[
\Ga(0,x)\lesssim \log\bigg(2+\frac1x\bigg)\,,
\]
valid for all $x>0$,
immediately implies that
\begin{equation}\label{x50}
\sup_{|z_6|<\frac12}\tI\lesssim \int_0^{2\pi}\int_{-\infty}^\infty
e^{-c\si^2} \log\bigg(2+\frac1{c \si^2(\si^2-1/2)^2\sin^2\al\cos^2\al}\bigg) \,d\si\,
     d\al\lesssim 1
   \end{equation}
   when $z_5=0$.

\subsubsection{The case $z_5\neq0$}

In view of the estimate~\eqref{x50}, from now on, we shall assume that
$z_5\neq0$. Let us now define the new variable $\tvr:=-\vr/z_5$, in
terms of which the integral~$\tI$ reads as
\[
\tI\leq\int_{-1/|z_5|}^{1/|z_5|}
          \int_0^{2\pi}\int_0^\infty \frac{e^{-cQ_4}}{S(\tvr,\al)}\, d\si\, d\al\, d\tvr\,.
        \]
        Here we have used that
        \[
\sqrt{\vr^2
              \cos^2\al + (\vr +z_5)^2\sin^2\al}=|z_5|\, S(\tvr,\al)
        \]
        with
        \[
S(\tvr,\al):=\sqrt{\tvr^2
              \cos^2\al + (\tvr -1)^2\sin^2\al}
        \]
        and $Q_4$ is defined as
\[
Q_4:= \si^2+\frac{\si^2(\si^2-z_6)^2}{z_5^2S(\tvr,\alpha)^2}\sin^2\al\cos^2\alpha.
  \]

Let us fix some small~$\ep>0$ and define the sets
\[
\cM_0:=\{(\tvr,\al): |\tvr|<\ep\,,\; |\sin\al|<\ep\}\,,\qquad \cM_1:=\{(\tvr,\al): |\tvr-1|<\ep\,,\; |\cos\al|<\ep\}\,.
\]
Since $S(\tvr,\al)\gtrsim 1$ for $(\tvr,\al)\not\in \cM_0\cup\cM_1$ (not uniformly in $\ep$), let
us consider the set
\[
\cM_2:=\left(\left(-\frac1{|x_5|},\frac1{|x_5|}\right)\times \TT\right)
\backslash (\cM_0\cup\cM_1)
\]
and split the above integral as
\begin{align*}
  \tI= \int_{\cM_0}\int_0^\infty + \int_{\cM_1}\int_0^\infty +\int_{\cM_2}\int_0^\infty
  =:\tI_0+\tI_1+\tI_2\,.
\end{align*}

To estimate~$\tI_0$, observe that $\cM_0$ consists of two connected
components, which are contained in $|\tvr|<\ep$ and either $
|\al|<C\ep$ or $ |\al-\pi|<C\ep$, respectively. It is easy to
see that both contributions to the integral are of the same size, so
we will just consider the first. To analyze it, let us use the bound
\[
S(\tvr,\al)\gtrsim \sqrt{\tvr^2+\al^2}\,,
\]
which clearly holds for $(\tvr,\al)\in\cM_0^+$, to write
\begin{align*}
\tI_0&\lesssim \int_{-\ep}^{\ep}\int_{-C\ep}^{C\ep}
          \int_0^\infty \frac{e^{-cQ_4}}{S(\tvr,\al)}\,
       d\si\, d\al\, d\tvr\\
  &\lesssim \int_{-\ep}^{\ep}\int_{-C\ep}^{C\ep}
          \int_0^\infty \frac{e^{-c\si^2}}{\sqrt{\tvr^2+\al^2}}\,
       d\si\, d\al\, d\tvr\,.
\end{align*}
Once can now introduce a new set of polar coordinates
\[
\tvr =: r\cos\be\,,\qquad \al=: r\sin\be\,,
\]
which yields
\begin{align*}
  \tI_0&\lesssim \int_{0}^{C\ep}
          \int_0^{2\pi}\int_0^\infty e^{-c\si^2}\,d\si\,d\be\,dr\lesssim1\,.
\end{align*}
An analogous argument for~$\cM_1$, where $|\tvr-1|<\ep$ and either
$|\al-\frac\pi2|<C\ep$ or $|\al-\frac{3\pi}2|<C\ep$, shows that
\[
\tI_1\lesssim 1\,.
\]

It only remains to bound~$\tI_2$. As $S(\tvr,\al)\gtrsim \langle\tvr\rangle$
on~$\cM_2$, where
$\langle x\rangle:=(1+x^2)^{\frac12}$ is the Japanese bracket, we can write
\begin{align*}
\tI_2&\lesssim \int_{-1/|z_5|}^{1/|z_5|}
          \int_0^{2\pi}\int_0^\infty \frac1{\tvr} e^{-c
       \si^2-c\frac{\si^2(\si^2-z_6)^2}{z_5^2S(\tvr,\alpha)^2}\sin^2\al\cos^2\alpha}\,
       d\si\, d\al\, d\tvr\\
  &= 4\int_{-1/|z_5|}^{1/|z_5|}
          \int_0^{\pi/2}\int_0^\infty\frac1{\tvr} e^{-c
       \si^2-c\frac{\si^2(\si^2-z_6)^2}{z_5^2S(\tvr,\alpha)^2}\sin^2\al\cos^2\alpha}\, d\si\, d\al\, d\tvr\,.
\end{align*}
As $\cos^2\al\sin^2\al=\frac14\sin^2(2\al)$ and $\sin\al\gtrsim \al$  for $|\al|<\frac\pi2$, the integral
in~$\al$ can be estimated as
\begin{align*}
&\int_0^{\pi/2}e^{-c\frac{\si^2(\si^2-z_6)^2}{z_5^2{S}(\tvr,\alpha)^2}\sin^2\al\cos^2\alpha}\,d\al\leq
                                                                           \int_0^{\pi/2}e^{-C
                                                                           \frac{\si^2(\si^2-z_6)^2}{z_5^2\tilde{S}(\tvr)^2}\sin^2(2\al)}\,d\al\\
  & =2\int_0^{\pi/4}e^{-C
  	\frac{\si^2(\si^2-z_6)^2}{z_5^2\tilde{S}(\tvr)^2}\sin^2(2\al)}\,d\al\lesssim \bigg\langle\frac{\si(\si^2-z_6)}{z_5\tilde{S}(\tvr)}\bigg\rangle^{-1}\,,
\end{align*}
where $\tilde{S}(\tvr)\coloneqq \tvr^2+(1-\tvr)^2$. Here we have used that for $c>0$
\[
\int_0^{\pi/4} e^{-c^2x^2}dx=\frac{\sqrt{\pi } \Erf\left(\frac{\pi  c}{4}\right)}{2 {c}}\lesssim \langle {c}\rangle^{-1} \,,
\]
where $\Erf$ is the error function. Since $|z_6|\leq \frac12$, this yields
\begin{align*}
  \tI_2&\lesssim \int_{-1/|z_5|}^{1/|z_5|}
          \int_0^\infty \frac{e^{-c
       \si^2}}{\langle\tvr\rangle} \bigg\langle\frac{\si(\si^2-z_6)}{z_5 \tilde{S}(\tvr)}\bigg\rangle^{-1}\, d\si\,d\tvr\\
         &=\int_{-1}^1\int_0^\infty\frac{e^{-c
       \si^2}}{(z_5^2+\vr^2)^\frac12} \frac{|z_5\tilde{S}(\tvr)|}{(\vr^2+(\vr+z_5)^2+\si^2(\si^2-z_6)^2)^{1/2}}\, d\si\,d\vr\\
       &\leq \int_{-1}^1\int_0^\infty\frac{e^{-c
       \si^2}}{(\vr^2+(\vr+z_5)^2+\si^2(\si^2-1/2)^2)^{1/2}}\, d\si\,d\vr\,.
\end{align*}
where we have used that if $z_5=a\rho$
\begin{equation*}
	\frac{\left(z_5 \tilde{S}(\tvr)\right)^2}{\rho ^2+z_5^2}=\frac{\rho ^2+(\rho +z_5)^2}{\rho ^2+z_5^2}=\frac{a^2+2 a+2}{a^2+1}<C
\end{equation*}
for some $C>0$ and for all $a\in \bR$.
To integrate in $\vr$, we need that
\begin{multline*}
	\int_{-1}^1 \frac{1}{\sqrt{\left((a+\rho )^2+\rho ^2\right)+b}} \, d\rho\\
	=\frac{1}{\sqrt{2}}\log \left(\frac{\left(\sqrt{2} \sqrt{(a-2) a+b+2}-a+2\right) \left(\sqrt{2} \sqrt{a (a+2)+b+2}+a+2\right)}{a^2+2 b}\right)\,.
\end{multline*}
Using that $|z_5|<\frac12$ we conclude
\begin{equation*}
	\tI_2\lesssim \int_0^\infty e^{-c\si^2}	\log\left(\frac{\left(\sqrt{2} \sqrt{4 \sigma ^6-4 \sigma ^4+\sigma ^2+13}+5\right)^2}{2 \sigma ^2 \left(1-2 \sigma ^2\right)^2}\right)d\si
\end{equation*}
Thus, we obtain the bound
\[
\tI_2\lesssim 1\,,
\]
from the fact that the logarithmic singularities at $\sigma=0$ and $\sigma= 1/{\sqrt{2}}$ are integrable. Lemma~\ref{L.intQ} in then proven.

\section*{Acknowledgements}

A.E.\ is supported by the ERC Consolidator Grant~862342. D.P.-S.
is supported by the grants MTM PID2019-106715GB-C21 (MICINN) and Europa Excelencia EUR2019-103821 (MCIU). A.R.\ is supported by the grant MTM PID2019-106715GB-C21 (MICINN). This work is supported in part by the ICMAT--Severo Ochoa grant CEX2019-000904-S. A.R. is also a posgraduate fellow of the City Council of Madrid at the Residencia de Estudiantes (2020-2021).

\appendix
\section{Monochromatic waves with many nondegenerate critical points}
\label{A.manycp}

In this Appendix we aim to prove that there exist solutions to the
Helmholtz equation
\begin{equation*}
\De v + v=0
\end{equation*}
on the plane with many isolated critical points. Specifically, let
\[
N^*(\nabla v,R):=\left\{x\in B_R:\nabla v(x)=0\,,\; \det\nabla^2 v(x)\neq0\right\}
\]
be the number of nondegenerate critical points of~$v$ contained in the
ball of radius~$R$. One can then prove the following:

\begin{proposition}\label{P.manycp}
Given any continuous function $\rho:\RR^2:\to\RR^+$, there exists a
solution to the Helmholtz equation on~$\RR^2$ such that
\[
N^*(\nabla v,R)> \rho(R)
\]
for all $R>1$.
\end{proposition}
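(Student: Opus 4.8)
The plan is to build~$v$ as a convergent superposition $v=\sum_{j\ge1}c_j w_j$ of ``building‑block'' solutions, where each $w_j$ carries a prescribed (and, as $j$ grows, increasingly large) number of nondegenerate critical points confined to a small ball placed along a sequence of radii tending to infinity, and the constants $c_j>0$ are chosen small enough that the series converges in $C^\infty_{\mathrm{loc}}(\RR^2)$ to a solution of $\De v+v=0$ while the critical points contributed by each block survive in the full sum. The elementary ingredient is a \emph{localized existence statement}: for every $d\in\NN$, every $p\in\RR^2$ and every $\de>0$ there is a global solution~$w$ of the Helmholtz equation on~$\RR^2$ having at least~$d$ nondegenerate critical points inside $B_\de(p)$. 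To obtain it, pick a complex polynomial~$P$ of degree $d+1$ whose derivative has $d$ distinct zeros in the unit disk; the harmonic polynomial $Q(x):=\Real P(x_1+ix_2)$ then has exactly~$d$ nondegenerate critical points in $B_1(0)$, since $\nabla Q=\overline{P'(x_1+ix_2)}$. For $\mu>0$ small the Dirichlet problem $\De U+\mu^2U=0$ in $B_1(0)$, $U|_{\partial B_1}=Q|_{\partial B_1}$ has a unique solution~$U_\mu$, and interior elliptic estimates give $\|U_\mu-Q\|_{C^2(B_{1/2}(0))}\lesssim\mu^2$, so for $\mu$ small~$U_\mu$ still has~$d$ nondegenerate critical points in $B_{1/2}(0)$. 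The rescaled function $\widetilde w(x):=U_\mu\big(\mu^{-1}(x-p)\big)$ solves $\De\widetilde w+\widetilde w=0$ on $B_\mu(p)$ and has~$d$ nondegenerate critical points in $B_{\mu/2}(p)$; choosing $\mu<2\de$ and invoking the Runge approximation property of the Helmholtz operator (any local solution is approximated in $C^2_{\mathrm{loc}}$ by entire solutions, e.g.\ by finite superpositions of plane waves $e^{ix\cdot E(\phi)}$, cf.~\cite{EPS13}), one gets a global solution~$w$ that is $C^2$‑close to~$\widetilde w$ on $B_{\mu/2}(p)$ and hence inherits, by structural stability, its~$d$ nondegenerate critical points.

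Next I would set $p_j:=(j-\tfrac12,0)$ and, using the continuity of~$\rho$, define the integers $d_j:=\big\lceil\sup_{[1,\,j+1]}\rho\big\rceil+1$ — note these are \emph{known in advance} from~$\rho$, so there is no circularity. Inductively in~$j$ I would choose a radius $\de_j\in(0,\tfrac14)$ and, via the localized existence statement (applying the Runge step on the disconnected domain $B_{2\de_j}(p_j)\cup\bigcup_{k<j}B_{2\de_k}(p_k)$, whose complement has no bounded components, to make~$w_j$ moreover small on the earlier balls), a global solution~$w_j$ with at least $d_j$ nondegenerate critical points in the small ball $B_{\de_j}(p_j)\subset B_j\setminus\overline{B_{j-1}}$, all these balls being pairwise disjoint. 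One then fixes~$c_j>0$ small enough that, writing $\eta_j>0$ for the structural‑stability modulus of the critical points of~$c_jw_j$, the $C^1$‑size on $B_{\de_j}(p_j)$ of the perturbation $\sum_{k\neq j}c_kw_k$ is $<\eta_j$, and simultaneously $\sum_j c_j\|w_j\|_{C^2(\overline{B_{j}})}<\infty$. Granting this bookkeeping, $v:=\sum_{j\ge1}c_jw_j$ converges in $C^\infty_{\mathrm{loc}}$ to a solution of $\De v+v=0$, and $v$ has at least $d_j$ nondegenerate critical points in each $B_{\de_j}(p_j)\subset B_j$. Hence for $R\in(m,m+1]$ with $m\ge1$ an integer,
\[
N^*(\nabla v,R)\ \ge\ \sum_{j=1}^{m} d_j\ \ge\ d_m\ >\ \sup_{[1,\,m+1]}\rho\ \ge\ \rho(R),
\]
which is the assertion of Proposition~\ref{P.manycp}.

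The step I expect to be the main obstacle is precisely the last one: organizing the inductive choice of the scales~$\de_j$, the Runge accuracies and the coefficients~$c_j$ so that the infinite superposition converges \emph{and} the critical points of every block — which become more and more delicate, being packed into shrinking balls whose solutions oscillate at an effective scale much finer than~$1$ — are not destroyed by the tails of all the other blocks. Making these two requirements compatible forces one to take the building blocks suitably localized relative to one another, which in practice means choosing their Fourier content (in the Bessel‑index sense, or equivalently the spectral density on the circle) concentrated at high enough order that $w_j$ is exponentially small on the region of every earlier block, together with a very fast decay of the~$c_j$; verifying quantitatively that a fixed, small enough perturbation cannot overcome the (tiny but positive) nondegeneracy of each block is the technical heart of the argument.
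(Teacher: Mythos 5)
Your overall architecture --- local models carrying nondegenerate critical points, a global approximation step, and structural stability --- is the same as the paper's, and your localized existence statement (harmonic polynomial $\Real P$ with $d$ simple zeros of $P'$, perturbation to frequency $\mu$, rescaling, Runge approximation) is sound. The gap is exactly where you say it is, and as written your bookkeeping contains a genuine logical slip rather than just omitted detail. You propose to ``fix $c_j>0$ small enough that \ldots the $C^1$-size on $B_{\de_j}(p_j)$ of the perturbation $\sum_{k\neq j}c_kw_k$ is $<\eta_j$'', where $\eta_j$ is the stability modulus of $c_jw_j$. But $\eta_j$ scales linearly in $c_j$, while the already-fixed contribution $\sum_{k<j}c_kw_k$ to the perturbation on $B_{\de_j}(p_j)$ does not depend on $c_j$ at all: shrinking $c_j$ makes the inequality \emph{harder} to satisfy, and in fact the condition forces a \emph{lower} bound $c_j\gtrsim \|\sum_{k<j}c_kw_k\|_{C^1(B_{\de_j}(p_j))}/\eta_j^0$ (with $\eta_j^0$ the modulus of $w_j$ itself). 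The ``very fast decay of the $c_j$'' you invoke at the end is therefore in direct tension with what the construction needs. The correct resolution is that the protection of the earlier blocks must come entirely from making $w_j$ itself small on a compact set containing $\overline{B_{j-1}}$ (your Runge step must be performed on $\overline{B_{j-1}}\cup \overline{B_{2\de_j}(p_j)}$, not just on the union of the small balls, which also fixes the otherwise uncontrolled growth of $\|w_j\|_{C^2(\overline{B_R})}$ needed for convergence of the series), while $c_j$ is chosen \emph{large} enough to dominate the fixed earlier contribution on its own ball, with a reserved geometric budget for the not-yet-chosen tail $k>j$. None of this is carried out, and you explicitly defer ``the technical heart of the argument''.

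The paper avoids the entire superposition scheme. It places one point $x_k$ per desired critical point, takes the trivial local model $v_k(x)=J_0(|x-x_k|)$ (a single nondegenerate maximum, $D^2v_k(x_k)=-\tfrac12 I$), and then invokes a single ready-made tool: the \emph{better-than-uniform} global approximation theorem for the Helmholtz equation \cite[Lemma~7.2]{EPS13}, which produces one entire solution $v$ with $\|v-v_k\|_{C^2(B(x_k,2r_k))}<\ep_k$ simultaneously for all $k$, with individually prescribed accuracies $\ep_k$. That theorem is precisely the quantitative gluing you are trying to reconstruct by hand; citing it (or reproducing its proof) is what your argument is missing. If you do not wish to rely on it, you must complete the inductive estimates above; if you do rely on it, the harmonic-polynomial block and the infinite superposition become unnecessary.
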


\begin{proof}
Without any loss of generality, let us assume that the function~$\rho$
is increasing. Take a set of distinct points $\{x_k\}_{k\in\NN}\subset \mathbb R^2$ without any
accumulation points such that
\begin{equation}\label{Eq.AppA1}
	\#\{k\in\NN: x_k\in B_R\}>\rho(R+\tfrac12)
\end{equation}
for all $R>\frac18$. At each point~$x_k$, consider the number
\[
r_k:= \frac18\min\left\{ 1, \inf_{j\in\NN\backslash\{ k\}}{|x_k-x_j|}\right\}\,,
\]
which is positive because the set $\{x_k\}_{k\in\NN}$ does not have
any accumulation points.

The function $v_k(x):= J_0(|x-x_k|)$ satisfies the Helmholtz
equation on the plane and $x_k$ is a nondegenerate maximum
of~$v_k$ (in fact, $D^2 v_k(x_k)=-\frac12I$). Therefore, the implicit function theorem ensures that there
exists some $\ep_k>0$ such that any function~$v$ with~$\|v_k-v\|_{C^2(B(x_k,2r_k))}<\ep_k$ has a nondegenerate local
maximum inside the ball $B(x_k,r_k)$. Notice that $B(x_k,2r_k)\cap B(x_j,2r_j)=\emptyset$ if $k\neq j$.

The better-than-uniform global approximation theorem for the
Helmholtz equation~\cite[Lemma~7.2]{EPS13} ensures that there exists a
solution~$v$ to the Helmholtz equation on~$\RR^2$ such that
\[
\sup_{k\in\NN} \frac{\|v_k-v\|_{C^2(B(x_k,2r_k))}}{\ep_k}<1\,.
  \]
One then infers that $v$ has a nondegenerate critical point in each
disk $B(x_k,r_k)$. The property~\eqref{Eq.AppA1} then ensures that $N^*(\nabla v,R)> \rho(R)
$ for all $R>1$, as claimed.
\end{proof}

\begin{remark}
The result and the proof remain valid in higher dimensions. The only
modification is that, on~$\RR^n$, one must define $v_k(x):=
|x-x_k|^{1-\frac n2}J_{\frac{n}{2}-1}(|x-x_k|)$.
      \end{remark}

      \begin{remark}
The function~$v$ may not be polynomially bounded at infinity,
so $v$ does not need to have a Fourier transform. In particular, it
does not need to be the Fourier transform of a distribution
supported on the unit sphere.
\end{remark}

\section{The translation-invariant
  case}\label{ApCompTI}

In this Appendix we shall see why the evaluation of the Kac--Rice
integral that gives the asymptotic behavior of $\bE\crit$
(cf.~Lemma~\ref{L:KR est}) is so much
easier in the translation-invariant case (that is, when $s=0$ following Remark \ref{KernelNS}).

In the translation-invariant case, it is easy to work directly in
Cartesian coordinates, instead of using polar coordinates. This is
because all one needs to know about~$u$
in order to apply the Kac--Rice formula are expectation
values of the form $\bE[\pd^\al u(x)\,\pd^\be u(x)]$, where $\al,\be$
are multiindices of order at most~2. These quantities can be computed
exactly using that, as discussed in Remark~\ref{KernelNS}, for $s=0$
the covariance kernel is (up to a normalizing constant)
\begin{equation}\label{addition1}
K(x,x')=J_0(|x-x'|)= \int_{\TT}e^{i\xi\cdot (x-x')}\, d\si(\xi)\,.
\end{equation}
Indeed, taking derivatives in this
expression one finds that
\[
\bE[\pd^\al u(x)\,\pd^\be u(x)]=i^{|\al|-|\be|}\int_{\TT}
\xi^\al\,\xi^\be \, d\si(\xi)\,.
\]
The last integral can be computed in closed form because~\cite{Fol01}
\[
\int_\TT \xi^\al\, d\si(\xi)= \begin{cases}
{\pi^{-1}}\big[\prod_{j=1}^{2}\Gamma(\frac{\al_j+1}2)\big] /\Gamma(\frac{|\al|+2}2) & \text{if $\al_1,\al_2$
	are even,}\\[1mm]
0 & \text{otherwise.}
\end{cases}
\]
These formulas readily show that $\bE[\pd_ju\, \pd_{kl}u]=0$, so
$\nabla u$ and $\nabla^2 u$ are independent Gaussian random functions,
and that the covariance matrices of the first and second derivatives
of~$u$ are
\begin{equation*}
\var \nabla u(x)={\frac{1}{2}} I\,,\qquad \var\nabla^2 u(x)=\frac{1}{8}  \left(
\begin{array}{ccc}
3 & 0 & 1 \\
0 & 1 & 0 \\
1 & 0 & 3 \\
\end{array}
\right)\,.
\end{equation*}
Again, we have regarded $\nabla^2 u$ as a 3-component vector. By
the Kac--Rice formula, these
expressions are enough to show
\begin{equation}\label{ka0easy}
\bE N(\nabla u, R)=\pi R^2\int_{\RR^3}\frac{\left| z_1^2+2 \sqrt{2}
    z_1 z_2-z_2^2\right| }{8
  \pi}\frac{e^{-\frac12|z|^2}}{(2\pi)^{3/2}}dz=\ka(0) R^2\,
\end{equation}
as in Remark \ref{rem:kappa 0}.

In polar coordinates, one sees essentially the same
simplifications. The point is that it suffices to differentiate the addition formula
\[
g(r,r',\te):=J_0\big(\sqrt{r^2+r'^2-2rr'\cos\te}\big)=
\sum_{l=0}^\infty{\epsilon_l} J_l(r) J_l(r')\cos l\te\,,
\]
where $\epsilon_l:=2-\de_{l,0}$ is  Neumann's factor,
to compute in closed form all the sums
appearing in the Kac--Rice formula (Lemma~\ref{L:KR est}). Incidentally, the addition formula is equivalent to the assertion that the covariance matrix of~$u$
is~\eqref{addition1}, written in polar coordinates. For example,
\begin{align*}
  \sum_{l=0}^\infty{\epsilon_l} J_l(r)^2&=g(r,r,0)=1\,,\\
  \sum_{l=0}^\infty{\epsilon_l} J_l'(r)^2&=\pd_r\pd_{r'}g(r,r,0)=\frac12\,,\\
  \sum_{l=0}^\infty{\epsilon_l} l^2J_l(r)J_l'(r)&=-\frac12\pd_r\pd^2_\te
                                      g(r,r,0)=\frac r4\,,\\
    \sum_{l=0}^\infty{\epsilon_l} l^4J_l(r)^2&=\pd_\te^4g(r,r,0)=\frac{r^2(4+3r^2)}8\,.
\end{align*}
These formulas are exact and easy to obtain, as one does not need to
carry out the hard frequency analysis that constitutes the core of
this paper. Of course, one can plug the values of these sums in
Lemma~\ref{L:KR est} to readily recover the formula~\eqref{ka0easy} for
the expected number of critical points.

\bibliographystyle{amsplain}

\end{document}